\title{Differential Geometry of Synthetic Schemes}
\author{Felix Cherubini$^1$, Matthias Ritter$^1$, Hugo Moeneclaey$^1$ and David Wärn$^1$}
\date{$^1$ University of Gothenburg and Chalmers University of Technology }
\begin{document}

\maketitle

\begin{abstract}
  Synthetic algebraic geometry is a new approach to algebraic geometry. It consists in using homotopy type theory extended with three axioms, together with the interpretation of these in a higher version of the Zariski topos, in order to do algebraic geometry internally to this topos. In this article we make no essential use of the higher structure on types, so that we could alternatively use the traditional Zariski 1-topos.
  
  We give new synthetic definitions of étale, smooth and unramified maps between schemes.
  We prove the usual characterizations of these classes of maps in terms of injectivity, surjectivity and bijectivity of differentials. We also show that the tangent spaces of smooth schemes are finite free modules.
  Finally, we show that unramified, étale and smooth schemes can be understood very concretely, indeed they admit the expected local algebraic description.
\end{abstract}

\tableofcontents

\section*{Introduction}

In mathematics, it is common practice to assume a fixed set theory, usually with the axiom of choice, as a common basis. While it is a great advantage to work in one common language and share a lot of the basic constructions, the dual approach of adapting the  ``base language'' to particular mathematical domains is sometimes more concise, provides a new perspective and encourages new proof techniques which would be hard to find otherwise.
We use the word ``synthetic'' to indicate that the latter approach is used,
as it was used by Kock and Lawvere to describe developement of mathematics internal to certain categories \cite{lawvere-categorical-dynamics}, in particular toposes -- a program which dates back as far as 1967.

Already in the 70s in the same program, Anders Kock suggested to use the language of higher-order logic \cite{Church40} to describe the Zariski topos, the collection of sheaves for the Zariski topology \cite{Kock74,kockreyes}, which is the first occurence of synthetic algebraic geometry.
Kock's approach allowed for a more suggestive and geometrical description of schemes.
There is in particular a ``generic local ring'' $R$, which, as a sheaf, associates to any algebra $A$ its underlying set and, as described in \cite{kockreyes}, the projective space $\bP^n$ is then the set of lines in $R^{n+1}$.

Just using category theory is not the same as reasoning synthetically -- for the latter the goal is usually to derive results exclusively in one system,
as Kock and Lawvere did with differential geometry in his work.
The distinction with just using an abstraction like categories is important, since the translation from the synthetic language and back can become cumbersome -- although it is still the goal to derive statements about ordinary mathematical objects in the end.

Starting with Kock and Lawvere's work, more differential geometry was developed synthetically \cite{kock-sdg} along with a study of the models of the theory \cite{moerdijk-reyes}.
One basic axiom of the theory, called the Kock-Lawvere axiom, allows for reasoning with nilpotent infinitesimals. Our version of synthetic algebraic geometry uses a generalisation of this axiom called the duality axiom. Let us now describe the Kock-Lawvere axiom.

The Kock-Lawvere axiom is added to a basic language which can be interpreted in good enough categories, for example toposes. More precisely, we need basic objects like $\emptyset$, $\{\ast\}$ and $\N$ as well as natural constructions like $A\times B$ or $A^B$ for objects $A$, $B$. These constructions come with data, like the projections in the case of $A\times B$, satisfying natural laws. We also need predicates $P(x)$ for elements $x:A$ so we can form subobjects like $\{x:A\mid P(x)\}$.
In this language, we assume there is a fixed ring $R$, which can be thought of as the real numbers. We define $\D(1)=\{x\in R\mid x^2=0\}$ to be the set of all square-zero elements of $R$, then the Kock-Lawvere axiom gives us a bijection
\[ e : R \times R  \to R^{\D(1)} \]
which commutes with evaluation at $0$ and projection to the first factor.
The intuition is that $\D(1)$ is so small that any function on it is linear and therefore determined by its value and its derivative at $0\in\D(1)$.
With this axiom, the derivative at $0:R$ of a function $f : R \to R$ may then be defined as $\pi_2(e^{-1}(f_{\vert \D(1)}))$. This is the start of a convenient development of differential calculus, which doesn't require any further structures on $R$ or other objects. This is the core of the synthetic method: we can work with these differential spaces as if they were sets.

To give an example, the tangent bundle of a manifold $M$ can be defined as $M^{\D(1)}$ and vector fields as sections of the map $M^{\D(1)}\to M$ evaluating at $0$. Then it is easy to see that a vector field is the same as a map $\zeta:\D(1)\to M^M$ with $\zeta(0)=\id_M$, which can be interpreted as an infinitesimal transformation of the identity map. This style of reasoning with spaces as if they were sets is also central in current synthetic algebraic geometry. 

The Kock-Lawvere axiom above are incompatible with the law of excluded middle (LEM) and therefore also with the axiom of choice (AC). Indeed they imply all maps from say $R$ to $R$ are differentiable, which contradicts LEM. 
However restricted versions of LEM and AC are compatible with this axiom. A very basic example is that equality of natural numbers is decidable, meaning that two natural numbers are either equal or not equal. We will latter go back to why full LEM and AC tend to be incompatible with synthetic approach to various geometry.


The use of nilpotent elements to capture infinitesimal quantities as mentioned above was inspired by the Grothendieck school of algebraic geometry and Anders Kock also worked with an extended axiom \cite{Kock74,kockreyes} suitable for synthetic algebraic geometry, where the role of $\D(1)$ above can be taken by any finitely presented affine scheme. In his 2017 doctoral thesis, Ingo Blechschmidt noticed a property holding internally in the Zariski-topos, which he called synthetic quasi-coherence. It is generalised and internalised version of what Kock used. In 2018, David Jaz Myers\footnote{Myers' never published on the subject, but communicated his ideas to Felix Cherubini and in talks to a larger audience \cite{myers-talk1,myers-talk2}.} started working with a specialization of Blechschmidt's synthetic quasi-coherence, which is what we now call \emph{duality axiom}.

To state the duality axiom we need to go from the space $\D(1)$ to spaces that are the common zeros of some finite system of polynomial equations over $R$. Such a space can be encoded independently of the choice of polynomials as a finitely presented $R$-algebra, i.e.\ an $R$-algebra $A$ which is of the form $R[X_1,\dots,X_n]/(P_1,\dots,P_l)$ for some numbers $n,l$ and polynomials $P_i\in R[X_1,\dots,X_n]$.
Then the set of roots of the system is given by the type $\Hom_{\Alg{R}}(A,R)$ of $R$-algebra homomorphisms from $A$ to the base ring. We denote this type by $\Spec A$.
Now the duality axiom states that $\Spec$ is the inverse to exponentiating with $R$, i.e.\ for all 
finitely presented $R$-algebras $A$ the following is an isomorphism:
\[ (a\mapsto (\varphi\mapsto \varphi(a))) : A\to R^{\Spec A}\rlap{.}\]

Myers used homotopy type theory as a base language, which is now the standard in synthetic algebraic geometry. Now we introduce homotopy type theory, in the next paragraphs we will explain how it fits with synthetic algebraic geometry. Homotopy type theory is a language for synthetic homotopy theory.
This means that when using it, we can think of the basic objects of the theory, that is types, directly as homotopy types. This should be contrasted with the usual practice in algebraic topology, which is to implement these homotopy types as topological spaces or Kan complexes.
So the rules of homotopy type theory allow to work with types in very much the same way as one would work with homotopy types in traditional mathematics. 

On the other hand we also use homotopy type theory because it allows to reason synthetically about spaces, as plain type theory does. A key point is that we do not use the law of excluded middle (LEM) or the axiom of choice (AC), which are incompatible with types being interpreted as spaces. Indeed on one hand LEM allows us to find a complement of each subset of a given type $A$, which exposes $A$ as a coproduct.
This is not true for spaces, for example, $\R$ is not the coproduct of the topological subspaces $\{0\}$ and $\R/\{0\}$.
On the other hand AC states that any surjection has a section. This is also not true for any sensible notion of space, in particular it would trivialise all cohomology.
Thus, constructive reasoning in the sense of not using LEM and AC is a necessity if we want to types to be understood as having a spatial structure. It turns out that this is the only obstruction, so the rules of type theory allow to work with type as one would work with spaces in algebraic geometry.

In synthetic algebraic geometry, we work inside homotopy type theory so that types behave both as homotopy types and as spaces from algebraic geometry. This means that we are mixing two synthetic approaches, combining their advantages,
which rests on the possibility of interpreting homotopy type theory in any higher topos \cite{shulman2019all} and not just the higher topos of $\infty$-groupoids. More precisely we think of the higher topos of Zariski sheaves with value in homotopy type. 
The general idea of using homotopy type theory to combine some kind of synthetic, spatial reasoning with synthetic homotopy theory, goes back at least to 2014, to Mike Shulman and Urs Schreiber \cite{Schreiber_2014}.
Schreiber suggested to the HoTT community at various occasions to make use of HoTT as the internal language of higher toposes, where specifities of the topos are accessed in the language via modalities.
This approach was shown to be quite effective and intuitive in Shulman's \cite{shulman-Brouwer-fixed-point} work on mixing synthetic homotopy theory in the form of HoTT and a synthetic approach to topology using a triple of modalities -- a structure called cohesion by Lawvere \cite{Lawvere2007}. A more detailed introduction to homotopy type theory for a general mathematical audience, with an emphasis on this mix of homotopical and spatial structure can be found in \cite{shulman-logic-of-spaces}.

One of the main advantages of using specifically homotopy type theory and not plain type theory, is using synthetic homotopical reasoning to make cohomological computations. Indeed one of Schreiber's motivation was to make use of the modern perspective on cohomology as the connected components of a space of maps in a higher topos. This can be mimicked in HoTT as follows: Given $X$ a type, $A$ an abelian group and $n:\N$, we define the $n$-th cohomology group of $X$ with coefficients in $A$ as
\[ H^n(X,A):=\| X\to K(A,n) \|_0\]
 where $\|\_\|_0$ is the $0$-truncation, an operation which turns any type into a $0$-type, that is a type with trivial higher structure. The type $K(A,n)$ is the $n$-th Eilenberg MacLane space, which can always be constructed for any abelian group $A$ and comes with an isomorphism $\Omega^n(K(A,n))\simeq A$.
With this definition of cohomology groups we can use synthetic homotopy theory to reason about cohomology, which had already been done successfully for the cohomology of homotopy types like spheres and finite cell complexes. It also works for the cohomology of $0$-types such as spaces in synthetic algebraic geometry.
This internal version of cohomology does not agree with the external version mentioned above, indeed it is a sheaf of groups instead of a single group, and it is indexed by an internal natural number instead of an external one. Nevertheless, internal cohomology turned out to be quite useful in practice.

In 2022, trying to use this approach to calculate cohomology groups in synthetic algebraic geometry led to the discovery of what is now called Zariski-local choice \cite{draft},
which is an additional axiom that holds in the higher Zariski-topos.
It is a weakening of the axiom of choice. In homotopy type theory, the axiom of choice can be formulated as follows: For any surjective map $f:X\to Y$, there exists a section, i.e.\ a map $s:Y\to X$ such that $f\circ s=\id_Y$.
Zariski-local choice also states the existence of a section, but only Zariski-locally and only for surjections into an affine scheme: For any surjection $f:E\to \Spec A$,
there exists a Zariski-cover $U_1,\dots,U_n$ of $\Spec A$ and maps $s_i:U_i\to E$ such that $f(s_i(x))=x$ for all $x\in U_i$.

In homotopy type theory, we use the propositional truncation $\|\_\|$ to define surjections and more generally what we mean by ``exists''.
Propositional truncation turns an arbitrary type $A$ into a type $\|A\|$ with the property that $x=y$ for all $x,y:\|A\|$.
Types with this property are called propositions or (-1)-types in homotopy type theory.
Using a univalent universe of types $\mathcal U$ we have that surjection into a type $A$ are the same as type families $F:A\to \mathcal U$, such that we have $\|F(x)\|$ for all $x: A$.
Using type families instead of maps allows us to drop the condition that the maps we get are sections, since we can express it using dependent function types and we arrive at the formulation of Zariski-local choice given below in the list of axioms.
In this instance and many others, homotopy type theory is much more convenient for formal reasoning, which is an advantage when formalizing synthetic algebraic geometry.

In total, the system we use for synthetic algebraic geometry consists of the extension of homotopy type theory postulating a fixed commutative ring $R$ satisfying these three axioms (see below for an explanation of the first one):

\begin{center}
\begin{axiom}[Locality]%
  \label{loc-axiom}
  $R$ is a local ring, i.e.\ $1\neq 0$ and whenever $x+y$ is invertible then $x$ is invertible or $y$ is invertible.
\end{axiom}

\begin{axiom}[Duality]%
  \label{duality-axiom}
  For any finitely presented $R$-algebra $A$, the homomorphism
  \[ a \mapsto (\varphi\mapsto \varphi(a)) : A \to (\Spec A \to R)\]
  is an isomorphism of $R$-algebras.
\end{axiom}

\begin{axiom}[Zariski-local choice]%
  \label{Z-choice-axiom}
  Let $A$ be a finitely presented $R$-algebra
  and let $B : \Spec A \to \mU$ be a family of inhabited types.
  Then there exists a Zariski-cover $U_1,\dots,U_n\subseteq \Spec A$
  together with dependent functions $s_i : (x : U_i)\to B(x)$.
\end{axiom}
\end{center}

As we explained above the duality axiom is a generalisation of the Kock-Lawvere axiom, which was used for convenient infinitesimal computations. It has a lot of consequences. In line with classical algebraic geometry, it shows that we have an anti-equivalence between finitely presented $R$-algebras and affine schemes of finite presentation over $R$.
More surprisingly, it implies that all functions in $R\to R$ are polynomials and that the base ring $R$ has surprisingly strong properties.
For example, for all $x:R$, we have that $x$ is invertible if and only if we have $x\neq 0$.

Surprisingly, the Zariski-local choice axiom was also usable to solve problems which have no obvious connection to cohomology.
For example, it implies that two reasonable definitions of open subsets agree.
In more detail, we can define open subsets using open propositions, which are propositions of the form $r_1\neq 0 \vee\dots\vee r_n\neq 0$ where $r_i:R$.
A subset $U$ of a type $X$ is open if the proposition $x\in U$ is open for all $x:X$.
Given an open subset $U$ of $\Spec A$, using Zariski-local choice we turn these elements $r_1,\dots,r_n$ of the base ring into functions defined Zariski-locally on $\Spec A$.
We can then even prove that $U$ is a union of non-vanishing sets $\mathrm{D}(f_i)$ of global functions $f_i:\Spec A \to R$, which is the second candidate for a definition of open subset alluded to above.
An analogous result holds for closed propositionsand vanishing sets of functions on affine schemes, where closed propositions are propositions of the form $r_1=0\wedge\dots\wedge r_n=0$ where $r_i:R$.

This connection between pointwise and Zariski-local openness is crucial to make the synthetic definition of a scheme work well:
A scheme is a type $X$, that merely has a finite open cover by affine schemes.
To produce interesting examples, it is necessary to use the locality axiom.
This is related to the Zariski topology and ensures that classical examples of Zariski covers can be reproduced.
A central example are the projective spaces $\bP^n$, which can be defined as the quotients of $R^{n+1}/\{0\}$ by the action of $R^\times$ by scaling.
A cover of $\bP^n$ is given by sets of equivalence classes of the form $\{[x_0:\cdots:x_n] \vert x_i\neq 0 \}$, which is clearly open using the pointwise definition.
To see that it is a cover, one has to note that for $x:R^{n+1}$, we have that $x\neq 0$ is equivalent to one of the entries $x_i$ being different from $0$. In synthetic algebraic geometry, this is the case for the base ring $R$ and the proof uses that $R$ is a local ring.

\paragraph{Contribution and organization of the article. }
We give a novel synthetic definition of formally étale, smooth and unramified types, using what we call \emph{closed dense propositions} (\Cref{def-etale-closed-dense}). We then define étale (resp. smooth, unramified) schemes simply as schemes that happens to be formally étale (resp. smooth, unramified) when seen as types. Étale (resp. smooth, unramified) maps between schemes are defined as maps with étale (resp. smooth, unramified) fibers.
This is an instance of a general phenomenon in synthetic reasoning: concepts which are usually defined locally can be defined fiberwise. 

While describing the infinitesimal structure of schemes in section 2, we also point out a curious discovery: there is a duality between finitely presented modules and finitely copresented modules over the internal base ring $R$ (\Cref{dual-of-fcop-fp} and \Cref{double-dual-identity}).
The latter notion of finitely copresented modules is not very prominent in algebra, but appears naturally in the study of tangent spaces of schemes (\Cref{tangent-finite-copresented}).

We show that the new definitions agree with a straightforward translations of the classical concepts (\Cref{connection-to-ega-definition}) and provide some characterizations using tangent spaces:
a map between schemes is unramified if and only if it induces injections on tangent spaces (\Cref{unramified-map-characterisation}), and a map between smooth schemes is étale (resp. smooth) if and only if it induces isomorphisms (resp. surjections) on tangent spaces (\Cref{etale-schemes-iff-local-iso,smooth-schemes-iff-submersion}).

Finally, we show that unramified, étale and smooth schemes can be described very concretely in the expected way, via conditions on the polynomials locally describing such schemes (\Cref{unramified-iff-locally-std-unramified} and \Cref{standard-etale-are-etale,standard-smooth-is-smooth}). An important intermediate result for the characterization of smooth schemes is that their tangent spaces are finite free $R$-modules (\Cref{smooth-have-free-tangent}).

\paragraph{Acknowledgements.}
We thank Thierry Coquand for discussions on the topic and in particular for explaining a proof of \Cref{extend-from-image} to us.
We thank Marc Nieper-Wißkirchen for a discussion which led to the explanation at the beginning of \Cref{remark-sym-dual}.
Work on this article was supported by the ForCUTT project, ERC advanced grant 101053291.

\section{Formally étale, unramified and smooth types}
In this section we will give our new synthetic definitions of formally étale, formally unramified and formally smooth types and maps. It is remarkable that these definitions work for any type or map rather than just scheme and map between them. Here we will derive consequences of these definition applicable for all types, as well as compare these definitions to the traditional ones.

\subsection{Definitions}

In \cite{draft} it is shown that elements of the base ring $R$ are nilpotent if and only if they are not not zero.
Both nilpotency and double negated equality have been used to describe infinitesimals and the following closed dense propositions can be viewed as closed subspaces of the point which are infinitesimally close to being the whole point:

\begin{definition}
A closed proposition is dense\index{closed dense proposition} if it is merely of the form:
\[r_1=0\land\cdots\land r_n=0\]
with $r_1,\cdots,r_n:R$ nilpotent.
\end{definition}

\begin{remark}
A closed proposition $P$ is closed dense if and only if $\neg\neg P$.
\end{remark}

From a traditional perspective, the inclusion $P\subseteq 1$ of a closed dense proposition into the point would be an infinitesimal extension.
In the following, we will use closed dense propositions to define synthetic analogs of notions which are traditionally defined using lifting properties against classes of infinitesimal extensions.
More details on the connection to traditional definitions will be given in \Cref{connection-to-ega-definition}.

\begin{definition}
  \label{def-etale-closed-dense}
A type $X$ is formally étale (resp. formally unramified, formally smooth)\index{formally étale}\index{formally unramified}\index{formally smooth} if for all closed dense proposition $P$ the map:
\[X\to X^P\]
is an equivalence (resp. an embedding, surjective).
\end{definition}

\begin{remark}
The map $X\to X^P$ is an equivalence (resp. an embedding, surjective) if and only if for any map $P\to X$ we have a unique (resp. at most one, merely one) dotted lift in:
\begin{center}
\begin{tikzcd}
P\ar[r]\ar[d] & X\\
1\ar[ru,dashed]& \\
\end{tikzcd}
\end{center}
\end{remark}

\begin{definition}
A map is said to be formally étale (resp. formally unramified, formally smooth) if its fibers are formally étale (resp. formally unramified, formally smooth).
\end{definition}

\begin{remark}
A type (or map) is formally étale if and only if it is formally unramified and formally smooth.
\end{remark}

\begin{lemma}\label{etale-unramified-smooth-square-zero}
A type $X$ is formally étale (resp. formally unramified, formally smooth) if and only if for all $\epsilon:R$ such that $\epsilon^2=0$, the map:
\[X\to X^{\epsilon=0}\]
is an equivalence (resp. an embedding, surjective).
\end{lemma}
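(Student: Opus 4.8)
The statement says that testing the étale/unramified/smooth conditions against all closed dense propositions is equivalent to testing only against the "first-order" ones of the form $\epsilon = 0$ with $\epsilon^2 = 0$. One direction is trivial: a proposition $\epsilon = 0$ with $\epsilon^2 = 0$ is a closed dense proposition (take $n = 1$, $r_1 = \epsilon$, which is nilpotent since $\epsilon^2 = 0$), so if $X \to X^P$ has the relevant property for all closed dense $P$, it has it in particular for these. The work is in the converse: assuming the condition for all square-zero $\epsilon$, derive it for an arbitrary closed dense $P$.

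The plan is to filter a general closed dense proposition $P$, given by $r_1 = 0 \land \cdots \land r_n = 0$ with each $r_i$ nilpotent, through a finite chain of "square-zero steps." First I would reduce to the case $n = 1$: a conjunction $r_1 = 0 \land r_2 = 0$ can be built up one conjunct at a time, and if $Q$ and $P$ are closed dense, then $P$ restricted to the subsingleton $Q$ (i.e. the proposition "$P$ holds" viewed under the assumption $Q$) lets us factor $X \to X^{P \land Q}$ as $X \to X^Q \to (X^Q)^{P} = X^{P \land Q}$, using $X^{P \land Q} \simeq (X^P)^Q$. Since equivalences, embeddings and surjections are all closed under composition, it suffices to handle adding a single nilpotent conjunct. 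Then for $n = 1$ with $r$ nilpotent, say $r^N = 0$, I would induct on $N$: the proposition $r = 0$ can be reached from the proposition $r^{\lceil N/2 \rceil} = 0$ (or from $r^{N-1} = 0$) by adjoining a conjunct $\epsilon = 0$ where $\epsilon$ is a suitable power of $r$ satisfying $\epsilon^2 = 0$ relative to the already-imposed vanishing — concretely, working under the assumption $r^{N-1} = 0$, the element $\epsilon = r^{\lceil (N-1)/2 \rceil}$ squares to zero, and $r = 0$ is equivalent to $r^{N-1} = 0 \land (\text{something that becomes square-zero})$. I would make the bookkeeping precise by choosing, at each step, the proposition "$r^k = 0$" for a decreasing sequence of exponents $k = N, \lceil N/2\rceil, \ldots, 1$, noting that passing from $r^{2k}=0$ to $r^{k}=0$ is exactly adjoining the conjunct $\epsilon = 0$ for $\epsilon = r^k$, which satisfies $\epsilon^2 = r^{2k} = 0$ under the standing assumption.

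Assembling these, $X \to X^{P}$ factors as a finite composite of maps each of the form $Y \to Y^{\epsilon = 0}$ for square-zero $\epsilon$ (with $Y$ of the form $X^Q$ for some closed dense $Q$, so that the hypothesis still applies — here I should check that the hypothesis "for all square-zero $\epsilon$, $X \to X^{\epsilon=0}$ is an equivalence/embedding/surjection" is inherited by $X^Q$, which follows because $(X^Q)^{\epsilon = 0} \simeq (X^{\epsilon=0})^Q$ and the relevant class of maps is stable under $(-)^Q$). Then closure of equivalences/embeddings/surjections under composition finishes the argument.

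The main obstacle will be the combinatorial reduction in the middle: making rigorous that every closed dense proposition is reachable from the trivially-true proposition by a finite sequence of square-zero conjunct-adjunctions, and in particular keeping track of the fact that "square-zero" need only hold relative to the propositions adjoined so far. The cleanest way is probably the explicit halving-of-exponents induction described above, handled one nilpotent generator at a time; the ring-theoretic facts needed ($r^N = 0 \Rightarrow (r^{\lceil N/2 \rceil})^2 = 0$, and stability of the three classes of maps under $(-)^P$ and under composition) are all routine.
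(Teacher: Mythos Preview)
Your approach is essentially the paper's: both filter the closed dense proposition through a finite chain whose successive steps adjoin a single square-zero conjunct, then compose. The paper phrases the filtration algebraically (factor $R\to R/N$ through quotients by principal square-zero ideals), while you build it explicitly by handling generators one at a time and halving exponents; these are the same filtration viewed from the two sides of $\Spec$.

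One point to tighten: the claim that surjections are ``stable under $(-)^Q$'' is not routine for an arbitrary proposition $Q$---it is exactly the statement that $Q$ satisfies choice. It holds here because each $Q$ in your chain is a closed proposition, hence an affine scheme, hence has choice; the paper invokes this explicitly (``surjective by $P_k$ having choice''), and you should too, since without it the formally smooth case of your composition argument does not go through. Relatedly, your ``$X^Q$ inherits the hypothesis'' framing is slightly off: you do not need $X^Q$ to satisfy the hypothesis for all square-zero $\epsilon$ in $R$; rather, \emph{under the assumption $Q$} your particular $\epsilon$ becomes square-zero, so the hypothesis applies pointwise over $Q$, and then the induced map $X^Q\to X^{Q\land(\epsilon=0)}$ is the dependent product $\prod_{q:Q}(X\to X^{\epsilon=0})$ of maps that are individually equivalences/embeddings/surjections---which is where the choice for $Q$ enters in the smooth case.
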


\begin{proof}
The direct direction is obvious as $\epsilon=0$ is closed dense when $\epsilon^2=0$.

For the converse, assume $P=\Spec(R/N)$ a closed dense proposition. Then the map $R\to R/N$ with $N$ finitely generated nilpotent ideal can be decomposed as:
\[R\to A_1\to \cdots A_n = R/N\]
where $A_k$ is a quotient of $R$ by a finitely generated nilpotent ideal and:
\[A_k\to A_{k+1}\]
is of the form:
\[A\to A/(a)\]
for some $a:A$ with $a^2=0$.

We write $P_k = \Spec(A_k)$ and:
\[i_k:P_{k+1}\to P_k\] 
so that $\mathrm{fib}_{i_k}(x)$ is $a(x)=0$ where $a(x)^2=0$ holds.

Then by hypothesis we have that for all $k$ and $x:P_{k}$ the map:
\[X\to X^{\mathrm{fib}_{i_k}(x)}\]
is an equivalence (resp. an embedding, surjective). So the map:
\[X^{P_{k}} \to \prod_{x:P_{k}}X^{\mathrm{fib}_{i_k}(x)} = X^{P_{k+1}}\]
is an equivalence (resp. an embedding, surjective, where surjectivity uses $P_{k}$ having choice).
We conclude that the map:
\[X\to X^P\]
is an equivalence (resp. an embedding, surjective).
\end{proof}

\subsection{Stability results}

Being formally étale is a modality given as nullification at all dense closed propostions and therefore lex \cite[Corollary 3.12]{modalities}.
This means that we have the following:

\begin{proposition}
Formally étale types enjoy the following stability results:
\begin{itemize}
\item If $X$ is any type and for all $x:X$ we have $Y_x$ formally étale, then $\prod_{x:X}Y_x$ is formally étale. 
\item  If $X$ is formally étale and for all $x:X$ we have $Y_x$ formally étale, then $\sum_{x:X}Y_x$ is formally étale. 
\item If $X$ is formally étale then for all $x,y : X$ the type $x=y$ is formally étale.
\item The type of formally étale types is itself formally étale.
\end{itemize}
\end{proposition}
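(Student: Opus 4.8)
The plan is to deduce all four items from the single observation recorded just above the statement: a type is formally étale exactly when it is modal for the nullification modality $\bigcirc$ at the family of closed dense propositions, and this $\bigcirc$ is lex. The actual proof is then a short sequence of appeals to standard facts about (lex) modalities, with no computation.

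First I would make the indexing precise. Although ``closed dense proposition'' is defined through a propositional truncation, every such $P$ is by definition merely of the form $r_1=0\land\cdots\land r_n=0$ with all $r_i$ nilpotent, so nullifying at the honest small family indexed by $\sum_{n:\N}\{(r_1,\dots,r_n):R^n\mid \text{each }r_i\text{ nilpotent}\}$ yields the same modal types. Hence $\bigcirc$ is an accessible modality, and since every type in the family is a proposition it is lex by \cite[Corollary 3.12]{modalities}. Each bullet is then an instance of a known result:
\begin{itemize}
\item $\prod_{x:X}Y_x$ is formally étale whenever each $Y_x$ is: any reflective subuniverse, hence any modality, is closed under dependent products of modal types.
\item $\sum_{x:X}Y_x$ is formally étale whenever $X$ and each $Y_x$ are: this $\Sigma$-closure is exactly the property distinguishing a modality from a bare reflective subuniverse.
\item $x=y$ is formally étale whenever $X$ is: for a lex modality the modal types are closed under pullbacks, and $x=y$ is the pullback of $X$ along the two maps $1\to X$ picking out $x$ and $y$ (equivalently: modal types are $\bigcirc$-separated).
\item The universe $\mU_{\bigcirc}$ of formally étale types is itself formally étale: for an accessible modality this is one of the standard equivalent characterizations of lexness \cite{modalities}.
\end{itemize}

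I do not expect a genuine obstacle here; the mathematical substance is entirely in identifying the nullification modality and its lexness, which was done in the sentence preceding the statement. The only points deserving care are bookkeeping: replacing the ``merely of the form'' quantifier by an explicit small indexing family so that the machinery of \cite{modalities} applies verbatim, and observing that the last item really uses lexness whereas the first three hold for any modality. Beyond that, each bullet is a direct citation.
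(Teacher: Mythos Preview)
Your proposal is correct and follows exactly the paper's approach: the paper's entire ``proof'' is the sentence preceding the proposition, which identifies formally étale as nullification at closed dense propositions and invokes \cite[Corollary 3.12]{modalities} for lexness, leaving the four items as unstated consequences. You have simply spelled out those consequences and added the bookkeeping about the indexing family, which the paper omits.
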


Formally unramified type are the separated types \cite[Definition 2.13]{localization} associated to formally étale types. This means:

\begin{lemma}
A type $X$ is formally unramified if and only if for all $x,y:X$ the type $x=y$ is formally étale.
\end{lemma}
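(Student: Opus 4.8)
The plan is to relate formal unramifiedness of $X$ directly to formal étaleness of the identity types $x=y$ by unwinding both definitions in terms of lifts against a closed dense proposition $P$. Recall that $X$ is formally unramified means $X \to X^P$ is an embedding for every closed dense $P$, i.e.\ for any $f : P \to X$ there is at most one lift to $1 \to X$. Recall that each $x=y$ is formally étale means $(x=y) \to (x=y)^P$ is an equivalence. So I want to show: ``$X \to X^P$ is an embedding for all closed dense $P$'' is equivalent to ``$(x=y) \to (x=y)^P$ is an equivalence for all $x,y : X$ and all closed dense $P$''.

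First I would recall the standard homotopy-type-theoretic fact that a map $g : A \to B$ is an embedding if and only if for all $a,a' : A$ the induced map $(a = a') \to (g(a) = g(a'))$ is an equivalence. Applying this to $g : X \to X^P$, the map $X \to X^P$ is an embedding iff for all $x, y : X$ the map $(x = y) \to (\mathrm{const}_x =_{X^P} \mathrm{const}_y)$ is an equivalence. The key step is then to identify the codomain $(\mathrm{const}_x =_{X^P} \mathrm{const}_y)$: by function extensionality this is $\prod_{p : P} (x = y)$, which is exactly $(x=y)^P$, and under this identification the induced map is precisely the canonical map $(x=y) \to (x=y)^P$. So ``$X \to X^P$ is an embedding'' $\iff$ ``for all $x,y:X$, $(x=y)\to (x=y)^P$ is an equivalence''.

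From here both directions follow. For the forward direction, if $X$ is formally unramified, then for every closed dense $P$ and every $x,y : X$ the map $(x=y) \to (x=y)^P$ is an equivalence; since this holds for all closed dense $P$, the type $x=y$ is formally étale by \Cref{def-etale-closed-dense}. For the converse, if every $x=y$ is formally étale, then for every closed dense $P$ and all $x,y$ the map $(x=y)\to(x=y)^P$ is an equivalence, hence $X \to X^P$ is an embedding for every closed dense $P$, i.e.\ $X$ is formally unramified.

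The main obstacle, such as it is, is the bookkeeping in the key step: carefully checking that the path type $\mathrm{const}_x =_{X^P} \mathrm{const}_y$ is identified with $(x=y)^P$ \emph{compatibly} with the two canonical maps out of $x=y$ — i.e.\ that the equivalence $(\mathrm{const}_x =_{X^P}\mathrm{const}_y) \simeq \prod_{p:P}(x=y)$ coming from function extensionality sends $\mathrm{ap}_{(\mathrm{const}_{-})}$ to the constant-function map. This is a routine but slightly fiddly naturality check with function extensionality and $\mathrm{happly}$. Everything else is a direct application of the embedding-via-identity-types characterization together with the definitions.
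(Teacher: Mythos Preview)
Your proof is correct. The paper does not give an explicit proof of this lemma: it simply observes (just before the lemma) that formally unramified types are, by definition, the separated types for the formally étale nullification modality, and then the lemma is exactly the standard characterization of separated types for a lex modality (identity types are modal), citing \cite{localization}. Your argument is the direct unwinding of that general fact in this particular case: you use the embedding-via-identity-types characterization together with function extensionality to identify $\mathrm{const}_x =_{X^P}\mathrm{const}_y$ with $(x=y)^P$ compatibly with the canonical maps. So the content is the same; you just prove it by hand instead of invoking the abstract modality machinery. Your approach is more self-contained and makes clear that nothing beyond function extensionality is needed, while the paper's approach emphasizes that this is an instance of a general pattern for nullification modalities.
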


By \cite[Lemma 2.15]{localization}, being formally unramified is a nullification modality as well. This means we have the following:

\begin{proposition}
Formally unramified types enjoy the following stability results:
\begin{itemize}
\item If $X$ is any type and for all $x:X$ we have $Y_x$ formally unramified, then $\prod_{x:X}Y_x$ is formally unramified. 
\item If $X$ is formally unramified and for all $x:X$ we have $Y_x$ formally unramified, then $\sum_{x:X}Y_x$ is formally unramified.
\end{itemize}
\end{proposition}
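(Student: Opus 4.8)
The plan is to reduce both statements to the path-type characterisation of formally unramified types just proved (a type $Z$ is formally unramified iff $z=z'$ is formally étale for all $z,z':Z$), combined with the closure of \emph{formally étale} types under $\Pi$ and $\Sigma$ from the previous proposition.

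For the first item, I would take $f,g:\prod_{x:X}Y_x$ and show that the identity type $f=g$ is formally étale. By function extensionality $f=g$ is equivalent to $\prod_{x:X}(f(x)=g(x))$. Since each $Y_x$ is formally unramified, each $f(x)=g(x)$ is formally étale, so by the $\Pi$-closure of formally étale types the product $\prod_{x:X}(f(x)=g(x))$ is formally étale, hence so is $f=g$. Applying the path-type characterisation in the other direction yields that $\prod_{x:X}Y_x$ is formally unramified; note that no hypothesis on $X$ is needed here, matching the statement.

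For the second item, I would take $(x,y)$ and $(x',y')$ in $\sum_{x:X}Y_x$ and again show that the identity type is formally étale. The standard description of paths in a $\Sigma$-type gives an equivalence $\big((x,y)=(x',y')\big)\simeq\sum_{p:x=x'}(p_\ast y=y')$, where $p_\ast y$ denotes the transport of $y$ along $p$. Here $x=x'$ is formally étale because $X$ is formally unramified, and for each $p:x=x'$ the type $p_\ast y=y'$ is an identity type in $Y_{x'}$, hence formally étale because $Y_{x'}$ is formally unramified. By the $\Sigma$-closure of formally étale types — whose hypothesis that the base $x=x'$ be formally étale is exactly what the assumption on $X$ provides — the total type $\sum_{p:x=x'}(p_\ast y=y')$ is formally étale, so $(x,y)=(x',y')$ is formally étale, and the path-type characterisation concludes.

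I do not expect a real obstacle; the only point requiring care is invoking the correct closure property in the $\Sigma$ case, where one must verify that the base type of the $\Sigma$ is itself formally étale — this is precisely where the asymmetric hypothesis on $X$ enters. As an alternative one could argue purely modality-theoretically, since formally unramified types are the separated types for the formally étale modality and separated types for any modality are closed under $\Pi$ over arbitrary base and under $\Sigma$ over separated base; but the explicit path-type argument above is the most self-contained.
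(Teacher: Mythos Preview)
Your proof is correct. The paper does not spell out an argument here: having observed that formally unramified types are the separated types for the formally étale modality and (citing external literature) that they therefore form a nullification modality themselves, it simply records the proposition as an immediate consequence of general modality theory---precisely the alternative you mention at the end. Your primary route via the path-type characterisation together with the already-established closure of formally étale types under $\Pi$ and $\Sigma$ is an explicit unpacking of that abstract fact; it is more self-contained (no appeal to the cited modality papers) and makes visible exactly where the hypothesis on $X$ enters in the $\Sigma$ case.
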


Being formally smooth is not a modality, indeed we will see it is not stable under identity types. Neverthless we have the following results:

\begin{lemma}\label{smooth-sigma-closed}
Formally smooth types enjoy the following stability results:
\begin{itemize}
\item If $X$ is any type satifying choice and for all $x:X$ we have $Y_x$ formally smooth, then $\prod_{x:X}Y_x$ is formally smooth.
\item If $X$ is a formally smooth type and for all $x:X$ we have $Y_x$ formally smooth, then $\sum_{x:X}Y_x$ is formally smooth.
\end{itemize}
\end{lemma}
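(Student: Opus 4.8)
The plan is to verify, for an arbitrary closed dense proposition $P$, that the restriction map $Z \to Z^P$ is surjective, where $Z$ is the dependent product or sum under consideration. By the Remark, this amounts to showing that for every $\sigma : P \to Z$ the type of dotted lifts of $\sigma$ along $P \to 1$ — equivalently the fiber of $Z \to Z^P$ over $\sigma$ — is merely inhabited. Since this is a proposition, all the occurrences of ``merely'' appearing below may be combined freely.

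For the product, I would first observe the canonical equivalence $\bigl(\prod_{x:X} Y_x\bigr)^P \simeq \prod_{x:X} Y_x^P$ obtained by swapping arguments, under which the restriction map $\prod_{x:X} Y_x \to \bigl(\prod_{x:X} Y_x\bigr)^P$ identifies with the pointwise product $\prod_{x:X}\bigl(Y_x \to Y_x^P\bigr)$ of the individual restriction maps. By function extensionality, the fiber of this map over a point $g : \prod_{x:X} Y_x^P$ is $\prod_{x:X}\mathrm{fib}\bigl(Y_x \to Y_x^P\bigr)(g(x))$; each factor is merely inhabited because $Y_x$ is formally smooth, so choice for $X$ makes the whole product merely inhabited. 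This is the surjectivity we wanted.

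For the sum, given $\sigma : P \to \sum_{x:X} Y_x$ I would write $f := \mathrm{pr}_1\circ\sigma : P\to X$ and let $h : \prod_{p:P} Y_{f(p)}$ be the second component. Formal smoothness of $X$ makes $X \to X^P$ surjective, so we merely obtain $x_0 : X$ together with a path $q : \mathrm{const}_{x_0} = f$ in $X^P$, i.e.\ paths $q_p : x_0 = f(p)$ for $p : P$. Transporting $h$ backwards along the $q_p$ yields $h' : \prod_{p:P} Y_{x_0}$, a point of $Y_{x_0}^P$; formal smoothness of $Y_{x_0}$ then merely gives $y_0 : Y_{x_0}$ with $\mathrm{const}_{y_0} = h'$. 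Reassembling, the pair $(x_0, y_0)$ together with the combined path — first coordinate $q$, second coordinate the transport data — is a lift of $\sigma$, so the fiber over $\sigma$ is merely inhabited. Note this part needs neither choice nor that $P$ is a proposition, only surjectivity of $X\to X^P$.

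The one thing to be careful about is the sum case: one must check that after producing $x_0$ and $q$, the residual lifting problem for the second component really is the non-dependent problem $Y_{x_0}\to Y_{x_0}^P$ solved by formal smoothness of $Y_{x_0}$, and that the $\Sigma$-path algebra used to glue the two solutions actually produces a path $\mathrm{const}_{(x_0,y_0)} = \sigma$ in $\bigl(\sum_{x:X}Y_x\bigr)^P$. This is routine transport bookkeeping; conceptually it is just the fact that a lifting problem into a $\Sigma$-type unfolds as two stacked lifting problems, which is also why no modality-theoretic shortcut is available here — unlike the formally étale and formally unramified types, the formally smooth types are not the connected types of any modality, since a false closed dense proposition is connected for the étale modality yet not formally smooth.
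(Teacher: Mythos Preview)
Your argument is correct and is exactly the expected one; the paper in fact states this lemma without proof, so there is nothing to compare against. Both halves go through as you describe: the product case is a direct application of choice for $X$ to the fiberwise lifting problem, and the $\Sigma$-case is the standard two-stage lift (first in $X$, then transport and lift in $Y_{x_0}$).

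One small correction to your closing parenthetical. A literally \emph{false} proposition is empty and therefore trivially formally \'etale, hence formally smooth; moreover closed dense propositions always satisfy $\neg\neg P$, so none of them are false. What you want as a counterexample is a closed dense proposition $P$ that is not \emph{provable}, e.g.\ $\epsilon = 0$ for a generic square-zero $\epsilon$. For such $P$ one has $P^P \simeq 1$, so the restriction map $P \to P^P$ is surjective iff $P$ holds; thus $P$ is connected for the \'etale modality yet not formally smooth. The point you are making is right --- just replace ``false'' by ``not provable''.
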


\subsection{Type-theoretic examples}

The next proposition implies that open propositions, and therefore open embeddings, are formally étale.

\begin{lemma}\label{not-not-stable-prop-etale}
  Any $\neg\neg$-stable proposition is formally étale.
\end{lemma}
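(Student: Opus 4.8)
The plan is to show that if $P$ is any closed dense proposition and $X$ is a $\neg\neg$-stable proposition, then the map $X \to X^P$ is an equivalence. Since $X$ is a proposition, $X^P$ is also a proposition, so it suffices to produce a logical equivalence $X \leftrightarrow X^P$; and since the map $X \to X^P$ is the obvious inclusion (constant functions), I only need to construct a map $X^P \to X$. So the real content is: from a function $P \to X$, produce an element of $X$.

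First I would recall that $P$, being a closed dense proposition, is merely of the form $r_1 = 0 \land \cdots \land r_n = 0$ with each $r_i$ nilpotent, and that by \cite{draft} a ring element is nilpotent precisely when it is $\neg\neg$-zero; hence each $r_i = 0$ is a $\neg\neg$-stable, in fact $\neg\neg$-true, proposition, so $P$ itself is $\neg\neg$-stable and satisfies $\neg\neg P$. Now given $f : P \to X$, I want $x : X$. Since $X$ is $\neg\neg$-stable it suffices to prove $\neg\neg X$. But $\neg\neg P$ holds, so assume for a contradiction $\neg X$; from $\neg X$ and $f : P \to X$ we get $\neg P$, contradicting $\neg\neg P$. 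This gives $\neg\neg X$, hence $X$. Thus $X^P \to X$, and combined with the inclusion $X \to X^P$ between propositions we conclude $X \simeq X^P$, so $X$ is formally étale.

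The only mild subtlety — and the step I'd expect to state carefully rather than the "hard part" — is the passage from "each $r_i$ is nilpotent" to "$\neg\neg P$": one uses that $\neg\neg(A \land B) \leftrightarrow \neg\neg A \land \neg\neg B$ for propositions, together with the fact from \cite{draft} that nilpotent elements are $\neg\neg$-zero, i.e. $\neg\neg(r_i = 0)$. Everything else is formal manipulation of propositions and the definition of $\neg\neg$-stability, and there is no real obstacle; the proof is short precisely because $\neg\neg$-stable propositions are so rigid that the lifting problem against an infinitesimal extension is solved by a double-negation argument rather than by any genuine deformation-theoretic input.
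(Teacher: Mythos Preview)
Your proof is correct and follows essentially the same approach as the paper's: both reduce to showing $X^P \to X$ for closed dense $P$, use that such $P$ satisfy $\neg\neg P$, and conclude via $\neg\neg$-stability of $X$. The paper simply states ``for $P$ closed dense we have $\neg\neg P$'' without further justification, whereas you spell out this step (nilpotents are $\neg\neg$-zero, and $\neg\neg$ distributes over finite conjunctions); otherwise the arguments are identical.
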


\begin{proof}
  Assume $U$ is a $\neg\neg$-stable proposition. For $U$ to be formally étale it is enough to check that $U^P\to U$ for all $P$ closed dense. This holds because for $P$ closed dense we have $\neg\neg P$.
  \end{proof}

Before proving the next lemma about closed formally étale propositions,
we will state and prove a general fact about closed propositions:

\begin{lemma}
  \label{square-zero-implies-zero-decidable}
  Let $I$ be a finitely generated ideal of $R$ such that $I^2=0$ implies $I=0$.
  Then the closed proposition $I=0$ is decidable.
\end{lemma}

\begin{proof}
  Let $I\subseteq R$ be a finitely generated ideal such that $I^2=0$ implies $I=0$.
  Since the other implication always holds, the propositions $I^2=0$ and $I=0$ are equivalent, so we have $I=I^2$.
  By Nakayama (see \cite[Lemma II.4.6]{lombardi-quitte}) there exists $e:R$ such that $eI = 0$ and $1-e\in I$.
  If $e$ is invertible then $I=0$, if $1-e$ in invertible then $I=R$.
\end{proof}

\begin{lemma}\label{closed-and-etale-decidable}
Any formally étale closed proposition is decidable.
\end{lemma}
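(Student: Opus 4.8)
The plan is to squeeze out of the étaleness hypothesis a purely algebraic fact about the ideal defining the closed proposition, and then to finish using that $R$ is local. Write $P = (r_1 = 0 \land \cdots \land r_n = 0) = \Spec(R/\mathfrak a)$ with $\mathfrak a = (r_1,\dots,r_n)$, and suppose $P$ is formally étale. First I would note that if additionally $r_i^2 = 0$ for all $i$, then each $r_i$ is nilpotent, so $P$ is itself a closed dense proposition; applying formal étaleness of $P$ to the closed dense proposition $Q := P$ shows that $P \to P^P$ is an equivalence, and since $P^P$ is inhabited (by $\mathrm{id}_P$) so is $P$. Hence $(r_1^2 = 0 \land \cdots \land r_n^2 = 0) \to P$, and as the converse implication is trivial, the closed subschemes $\Spec(R/(r_1^2,\dots,r_n^2))$ and $\Spec(R/\mathfrak a)$ of $\Spec R = 1$ coincide. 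By the duality axiom the corresponding ideals coincide, so $\mathfrak a = (r_1^2,\dots,r_n^2) \subseteq \mathfrak a^2 \subseteq \mathfrak a$, i.e.\ $\mathfrak a = \mathfrak a^2$.

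Next, since $\mathfrak a$ is finitely generated and idempotent, the determinant trick (a standard form of Nakayama's lemma) yields an element $e \in \mathfrak a$ with $(1-e)x = 0$ for all $x \in \mathfrak a$; in particular $(1-e)e = 0$, so $e^2 = e$, and $\mathfrak a = (e)$ because $r_i = e r_i$ for each $i$. As $R$ is local, $e + (1-e) = 1$ forces $e$ or $1-e$ to be invertible, which together with $e^2 = e$ gives $e = 0$ or $e = 1$. If $e = 0$, then $r_i = e r_i = 0$ for all $i$, so $P$ holds; if $e = 1$, then $1 \in \mathfrak a$, so $P$ would force $1 = 0$, hence $\neg P$ holds. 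In either case $P$ is decidable.

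The hard part is the middle step: turning "formally étale", which a priori only concerns lifting against closed dense propositions, into the relation $\mathfrak a = \mathfrak a^2$. The key move is that, although $(r_1^2 = 0 \land \cdots \land r_n^2 = 0)$ is not dense in general, the hypothesis becomes usable exactly after one conditionally assumes all $r_i^2 = 0$, since then $P$ does become a closed dense proposition and may be plugged in as its own test proposition; duality then promotes the resulting implication of subterminals to the membership $r_i \in (r_1^2,\dots,r_n^2)$. One minor point to address is the degenerate case $0 = 1$ in $R$, which is harmless because then every type, in particular $P$, is contractible and hence trivially decidable.
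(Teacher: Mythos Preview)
Your proof is correct and follows essentially the same route as the paper: use formal étaleness (with $P$ as its own test proposition once it is closed dense) to obtain $\mathfrak a=\mathfrak a^2$, then apply Nakayama and locality of $R$ to conclude decidability. The only cosmetic difference is that the paper first records the intermediate statement that $P$ is $\neg\neg$-stable (since $\neg\neg P$ is exactly the condition that the generators are nilpotent) and then deduces $I^2=0\Rightarrow I=0$, whereas you go straight to the squares of the chosen generators; both yield $\mathfrak a\subseteq\mathfrak a^2$ via duality in the same way.
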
 

\begin{proof}
Given a formally étale closed proposition $P$, let us prove it is $\neg\neg$-stable. Indeed if $\neg\neg P$ then $P$ is closed dense so that $P\to P$ implies $P$ since $P$ is formally étale. 

Let $I$ be the finitely generated ideal in $R$ such that:
\[P\leftrightarrow I=0\]
We have that $I^2=0$ implies $\neg\neg (I=0)$ which implies $I=0$.
Then $P$ is decidable by \Cref{square-zero-implies-zero-decidable}. 
\end{proof}

\begin{proposition}\label{bool-is-etale}
  The type $\Bool$ is formally étale.
\end{proposition}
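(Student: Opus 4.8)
The plan is to reduce the claim to \Cref{closed-and-etale-decidable} together with the fact that $\Bool$ is a set with decidable equality. First I would recall that $\Bool = 1 + 1$, so that for a proposition $P$ the type $\Bool^P$ is the type of maps $P \to 1+1$. Since $P$ is a proposition and $1+1$ is a set, such a map is constant, hence $\Bool^P \simeq (P \to \{\mathsf{true}\}) + (P \to \{\mathsf{false}\}) \simeq \Bool^{\to P}$-data; more precisely, giving a map $P \to \Bool$ amounts to giving, for the unique "potential" point of $P$, a choice of boolean, i.e. $\Bool^P$ decomposes as the disjoint union over $b : \Bool$ of "the restriction to $P$ is constantly $b$". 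The map $\Bool \to \Bool^P$ sends $b$ to the constant map at $b$.

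Next I would analyze when this map is an equivalence. It is always an embedding: if two constant maps $P \to \Bool$ agree then their values agree, using that $P$ need not be inhabited but the comparison only uses that $\Bool$ is a set — actually one should be slightly careful, but since $\Bool$ is formally unramified iff its identity types are formally étale, and identity types of $\Bool$ are $1$ or $0$, which are $\neg\neg$-stable propositions hence formally étale by \Cref{not-not-stable-prop-etale}, we get that $\Bool$ is formally unramified. Thus it remains to show $\Bool$ is formally smooth, i.e. that $\Bool \to \Bool^P$ is surjective for every closed dense $P$: given $f : P \to \Bool$, we must merely produce $b : \Bool$ with $f = \mathsf{const}_b$. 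The fibers of $\Bool \to \Bool^P$ over a point $f$ are, up to the embedding already established, propositions of the form "$f$ is constantly $\mathsf{true}$" $+$ "$f$ is constantly $\mathsf{false}$", and each summand is a closed proposition (an equality of functions into $\Bool = 1+1$ unwinds to a conjunction of equations $r_i = 0$). Alternatively, and more cleanly: the fiber of $\Bool \to \Bool^P$ over $f$ is formally étale (as $\Bool$ is formally unramified its identity types, and hence these fibers, are formally étale), and it is a closed proposition, so by \Cref{closed-and-etale-decidable} it is decidable; combined with its merely being inhabited (it contains the "correct" boolean once we know $P$'s behaviour — or rather, one argues the fiber is non-empty), decidability forces it to be inhabited.

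Concretely the cleanest route is: (1) $\Bool$ is formally unramified, since $x = y$ for $x,y:\Bool$ is $1$ or $\emptyset$, both $\neg\neg$-stable, hence formally étale by \Cref{not-not-stable-prop-etale} and the characterization of formally unramified types. (2) For formal smoothness, fix a closed dense $P$ and $f : P \to \Bool$. Consider the proposition $Q := (\forall p : P,\ f(p) = \mathsf{true})$. Since $P$ is a proposition and equality in $\Bool$ is an equality in $1+1$, $Q$ is a closed proposition. Moreover $Q$ is formally étale: it is $\neg\neg$-stable, because $\neg\neg P$ holds (as $P$ is closed dense) and $\Bool$ has decidable equality, so $\neg\neg Q \to Q$. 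By \Cref{closed-and-etale-decidable}, $Q$ is decidable. If $Q$ holds, $f$ is constantly $\mathsf{true}$; if $\neg Q$, then — again using $\neg\neg P$ and decidability of equality in $\Bool$ — $f$ is constantly $\mathsf{false}$. Either way $f$ lies in the image of $\Bool \to \Bool^P$, so the map is surjective. Combining (1) and (2) with the remark that formally étale $=$ formally unramified $+$ formally smooth finishes the proof.

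The main obstacle I anticipate is step (2), specifically verifying that the auxiliary proposition $Q$ is genuinely a \emph{closed} proposition so that \Cref{closed-and-etale-decidable} applies, and organizing the $\neg\neg P$-argument so that the two cases ($Q$ and $\neg Q$) really do pin down $f$ on all of $P$ despite $P$ possibly being empty. This is where the hypothesis that $P$ is \emph{dense} (not merely closed) is essential: it supplies $\neg\neg P$, which is exactly what lets decidable-equality of $\Bool$ propagate a single boolean value across the whole (sub-)point $P$.
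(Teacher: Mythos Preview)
Your argument for formal unramifiedness (step (1)) is fine and matches the paper's. The gap is in step (2): you assert that $Q := \prod_{p:P}(f(p)=\mathsf{true})$ is a closed proposition, but the justification you give does not establish this. Via $\Bool\subseteq R$, the proposition $Q$ says that the function $1-f:P\to R$ vanishes identically; by duality this is the vanishing of an element of $R^P\cong R/N$, i.e.\ a condition of the form ``$a\in N$'' for some representative $a:R$. That is an existential over $R$, not a finite conjunction $r_1=0\wedge\dots\wedge r_n=0$ with $r_i:R$, and nothing in the paper turns it into one. Your sentence ``an equality of functions into $\Bool=1+1$ unwinds to a conjunction of equations $r_i=0$'' is exactly where the argument slips: those equations live in $R/N$, not in $R$. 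Without closedness of $Q$, \Cref{closed-and-etale-decidable} does not apply, and your (correct) $\neg\neg$-stability argument only shows $Q$ is formally \'etale, which alone does not give decidability. Indeed, deciding $Q$ \emph{is} the content of what you are trying to prove, so routing it through an unestablished closedness hypothesis is circular. You flag this as the main obstacle yourself, but do not overcome it.

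The paper proves formal smoothness by a different, direct route: embedding $\Bool\subseteq R$ and applying duality, a map $(\epsilon=0)\to\Bool$ corresponds to an idempotent of $R/(\epsilon)$. Since $R$ is local and $\epsilon$ is nilpotent, $R/(\epsilon)$ is local, hence its only idempotents are $0$ and $1$. This immediately produces the required constant lift and is where the Locality axiom enters the argument, rather than through \Cref{closed-and-etale-decidable}.
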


\begin{proof}
The identity types in $\Bool$ are decidable so $\Bool$ is formally unramified. Consider $\epsilon:R$ such that $\epsilon^2=0$ and a map:
\[\epsilon=0 \to \Bool\]
we want to merely factor it through $1$.

 Since $\Bool\subseteq R$, by duality the map gives $f:R/(\epsilon)$ such that $f^2=f$. Since $R/(\epsilon)$ is local we conclude that $f = 1$ or $f=0$ and so the map has constant value $0:\Bool$ or $1:\Bool$.
\end{proof}

\begin{remark}\label{finite-are-etale}
This means that formally étale (resp. formally unramified, formally smooth) types are stable by finite sums. In particular finite types are formally étale.
\end{remark}

\begin{proposition}
The type $\N$ is formally étale.
\end{proposition}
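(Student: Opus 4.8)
The plan is to prove that $\N$ is formally étale by verifying, separately, that it is formally unramified and formally smooth, which by the remark after the definition of formally étale maps is equivalent to formal étaleness. The formally unramified part is immediate: $\N$ has decidable equality, so each identity type $n = m$ with $n,m:\N$ is a decidable, hence $\neg\neg$-stable, proposition, hence formally étale by \Cref{not-not-stable-prop-etale}; the characterization of formally unramified types via their identity types then gives that $\N$ is formally unramified. So the real work is formal smoothness.

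For formal smoothness, by the lemma reducing closed dense propositions to propositions of the form $\epsilon = 0$ with $\epsilon^2 = 0$, I fix $\epsilon : R$ with $\epsilon^2 = 0$, set $P := (\epsilon = 0)$, and must show that $\N \to \N^{P}$ is surjective, i.e.\ that every $f : P \to \N$ merely factors through $1$. As this goal is a proposition, all intermediate propositional truncations may be eliminated into it. The key observation is that $P$ is an affine scheme: $R/(\epsilon)$ is a finitely presented $R$-algebra and $\Spec(R/(\epsilon)) \simeq (\epsilon = 0) = P$, which is moreover a closed dense proposition since $\epsilon$ is nilpotent. I then invoke the fact — a consequence of \Cref{duality-axiom}, see \cite{draft} — that every map from an affine scheme to $\N$ is bounded: there merely exists $N : \N$ with $f(p) \le N$ for all $p : P$. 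Extracting such an $N$, the map $f$ corestricts to $g : P \to \{k : \N \mid k \le N\}$ with $f$ recovered by postcomposing $g$ with the inclusion into $\N$. The type $\{k : \N \mid k \le N\}$ is finite, hence formally étale by \Cref{finite-are-etale}, hence formally smooth. Applying formal smoothness of $\{k : \N \mid k \le N\}$ to the closed dense proposition $P$ shows that $g$ merely factors through $1$; composing with the inclusion $\{k : \N \mid k \le N\} \hookrightarrow \N$ shows that $f$ merely factors through $1$, which is exactly what was needed.

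The only genuine obstacle is the boundedness input, which is precisely where \Cref{duality-axiom} enters the argument in an essential way: without it there is no reason a map $P \to \N$ should land in a finite stage, and the naive description of $\N$ as an infinite coproduct of copies of $1$ does not exhibit it as formally étale (infinite coproducts of formally étale types need not be formally étale). Everything else — the reduction to $\epsilon^2 = 0$, the identification of $P$ with an affine scheme, the corestriction, and the elimination of the two truncations into the propositional goal — is routine bookkeeping using results already established, in particular the stability of formally étale types under finite sums.
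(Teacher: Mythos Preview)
Your proof is correct and follows essentially the same approach as the paper: split into formally unramified (via decidable identity types) and formally smooth, then for smoothness use boundedness of maps from affine schemes to $\N$ to factor through a finite type, which is formally étale by \Cref{finite-are-etale}. The only minor difference is that your detour through the $\epsilon^2=0$ reduction lemma is unnecessary---an arbitrary closed dense proposition is already an affine scheme $\Spec(R/N)$, so boundedness applies directly---but this does no harm.
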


\begin{proof}
Identity types in $\N$ are decidable so $\N$ is formally unramified, we want to show it is formally smooth. Assume given a map:
\[P\to \N\]
for $P$ a closed dense proposition, we want to show it merely factors through $1$. By boundedness the map merely factors through a finite type, which is formally étale by \Cref{finite-are-etale} so we can conclude.
\end{proof}

\begin{lemma}\label{prop-are-unramified}
Any proposition is formally unramified.
\end{lemma}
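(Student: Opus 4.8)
The plan is to observe that for a proposition $X$, the statement to be proved is almost immediate, since all the relevant types are themselves propositions. Recall that $X$ is formally unramified exactly when, for every closed dense proposition $P$, the evaluation-type map $X \to X^P$ is an embedding. First I would note that $X^P$ is the function type $(P \to X)$, and a function type into a proposition is again a proposition, so whenever $X$ is a proposition, both the domain and the codomain of $X \to X^P$ are propositions.

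Then I would invoke the standard fact that any map $f : A \to B$ between propositions is an embedding: for each $b : B$ the fiber $\sum_{a:A} (f(a) = b)$ is a $\Sigma$-type of propositions (using that $A$ is a proposition and that $f(a) = b$ is a proposition because $B$ is), hence a proposition, so $f$ has propositional fibers and is therefore an embedding. Applying this to $f = (x \mapsto (\_ \mapsto x)) : X \to X^P$ gives that this map is an embedding for every $P$, in particular for every closed dense $P$, which is precisely the assertion that $X$ is formally unramified.

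An alternative route, which I would mention as a remark, uses the earlier characterization that $X$ is formally unramified iff each identity type $x = y$ is formally étale: if $X$ is a proposition then $x = y$ is contractible, and a contractible type $C$ is formally étale since $C^P$ is contractible for any $P$ and any map between contractible types is an equivalence. There is no real obstacle here; the only point requiring a moment's care is the elementary HoTT lemma that maps between propositions are embeddings, which is routine and can simply be cited.
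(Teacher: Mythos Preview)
Your argument is correct. The paper states this lemma without proof, treating it as immediate; your first route (both $X$ and $X^P$ are propositions, and any map between propositions is an embedding) is exactly the triviality the paper is relying on, and your alternative via the characterization through identity types is equally valid.
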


This means that any subtype of a formally unramified type is formally unramified.

\begin{remark}
  Given any lex modality, a type is separated if and only if it is a subtype of a modal type,
  so a type is formally unramified if and only if it is a subtype of a formally étale type.
\end{remark}

We also have the following surprising dual result, meaning that any quotient of a formally smooth type is formally smooth:

\begin{proposition}\label{smoothSurjective}
If $X$ is formally smooth and $p:X\twoheadrightarrow Y$ surjective, then $Y$ is formally smooth.
\end{proposition}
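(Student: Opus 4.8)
The plan is to check the definition directly: fix a closed dense proposition $P$ and show that the constant map $c_Y : Y \to Y^P$, $y \mapsto (\_ \mapsto y)$, is surjective. Writing $c_X : X \to X^P$ for the analogous map of $X$ and $p_* : X^P \to Y^P$ for postcomposition with $p$, a one-line computation gives $c_Y \circ p = p_* \circ c_X$ (both send $x$ to the constant function with value $p(x)$). Since $X$ is formally smooth, $c_X$ is surjective, so it suffices to prove that $p_*$ is surjective: then $p_* \circ c_X = c_Y \circ p$ is a composite of surjections, hence surjective, and consequently $c_Y$ is surjective, because a map $g$ is surjective as soon as some composite $g \circ f$ is.

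It remains to show $p_* : X^P \to Y^P$ is surjective. Given $f : P \to Y$, the family $t \mapsto \mathrm{fib}_p(f(t))$ over $P$ is pointwise inhabited because $p$ is surjective. At this point I would use the fact — already invoked above, in the proof that formal smoothness can be tested on the $\epsilon$ with $\epsilon^2 = 0$ — that a closed dense proposition satisfies choice: indeed $P \simeq \Spec(R/N)$ for some finitely generated nilpotent ideal $N$, the ring $R/N$ is local (a quotient of the local ring $R$ by a nilpotent ideal), and any Zariski cover of $\Spec$ of a local ring already contains the whole space, so Zariski-local choice yields a genuine dependent function rather than merely a locally defined one. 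Applying this, there merely exists $g : (t:P) \to \mathrm{fib}_p(f(t))$; its first components assemble into $\tilde f : P \to X$ with $p \circ \tilde f = f$, i.e.\ an element of $\mathrm{fib}_{p_*}(f)$. Since the statement that $\mathrm{fib}_{p_*}(f)$ is inhabited is a proposition, we conclude that $\mathrm{fib}_{p_*}(f)$ is merely inhabited, as required.

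The main obstacle is exactly the step that closed dense propositions have choice: for a general type $P$, postcomposition with a surjection need not be surjective on mapping spaces out of $P$, and it is here that the special nature of $P$ — being $\Spec$ of a local ring, so that Zariski-local choice upgrades to genuine choice — is used; the remainder is a formal diagram chase valid for any surjection $p$. The same argument can equivalently be phrased through the lifting formulation of formal smoothness: given $f : P \to Y$, first lift it along $p$ to some $\tilde f : P \to X$ (again using choice for $P$), then use formal smoothness of $X$ to obtain a merely existing extension $\hat f : 1 \to X$ of $\tilde f$ along $P \to 1$, and observe that $p \circ \hat f : 1 \to Y$ is the desired extension of $f$.
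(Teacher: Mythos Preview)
Your proof is correct and matches the paper's argument: the paper uses exactly your ``lifting'' formulation (given $f:P\to Y$, lift through $p$ using choice for closed propositions, then extend along $P\to 1$ by formal smoothness of $X$, and push down via $p$). Your surjectivity-of-maps phrasing is just a repackaging of the same idea, with the same key ingredient being choice for closed (dense) propositions.
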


\begin{proof}
For any $P$ closed dense and any map $P\to Y$, consider the diagram:
 \begin{center}
      \begin{tikzcd}
      P \ar[rd,dashed]\ar[d]\ar[r]& Y\\
      1 \ar[r,dashed,swap,"x"]& X\ar[u,swap,"p",two heads]
      \end{tikzcd}
    \end{center} 
    By choice for closed propositions we merely get the dotted diagonal, and since $X$ is formally smooth we get the dotted $x$, and then $p(x)$ gives a lift.
\end{proof}

\subsection{Classical definitions, examples and counter-examples}

In this section we will show that our definition of étale, smooth and unramified maps between schemes is equivalent to the obvious internal version of the traditional definition. It is important to keep in mind that our schemes are always locally of finite presentation, so the following definition is sensible:

\begin{definition}
  An étale (resp.\ unramified, smooth)\index{étale}\index{smooth}\index{unramified} scheme is a scheme which is formally étale (resp.\ formally unramified, formally smooth) as a type.
  An étale (resp.\ unramified, smooth) map is a map between schemes which is formally étale (resp.\ formally unramified, formally smooth).
\end{definition}

A criterion very similar to next remark appears as the definition of a formally étale, unramified and smooth maps in \cite[§17]{EGAIV4},
except we restrict to finitely presented algebras and finitely generated ideals, as our schemes are assumed locally of finite presentation, and we ask the lift for smoothness to exists only \emph{Zariski-locally}, as suggested in \cite[\href{https://stacks.math.columbia.edu/tag/02GZ}{Tag 02GZ}]{stacks-project}.  It is not clear if this internal criterion corresponds to the external definitions.

\begin{remark}
  \label{connection-to-ega-definition}
  Let $f:X\to Y$ be a map between schemes.
  Then $f$ is étale (resp.\ unramified, smooth) if and only if there exists exactly one (resp.\ at most one, at least one \emph{Zariski-locally}) dotted lift in all squares of the form:
  \begin{center}
    \begin{tikzcd}
      \Spec(A/N)\ar[r,"t"]\ar[d] & X\ar[d,"f"] \\
      \Spec(A)\ar[ru,dashed]\ar[r,swap,"b"] & Y
    \end{tikzcd}    
  \end{center}
where $A$ is a finitely presented $R$-algebra, $N$ a finitely generated nilpotent ideal and the left map is induced by the quotient map $A\to A/N$. In the smooth case, we believe that it is possible to prove global existence of these lifts using cohmological methods.
\end{remark}

\begin{proof}
  The inclusion of a closed dense proposition $P$ into $1$ is a special case of left map in the remark, so we only need to show that étale,
  smooth and unramified maps satisfy the more general lifting property. For étale and unramified maps, we can just apply the lifting property for closed dense propositions for all points in $\Spec A$. So let $f:X\to Y$ be smooth. Then we merely have a lift for each point in $\Spec(A)$ and can apply Zariski-local choice to get the desired result.
\end{proof}

We conclude this section with a few examples and counter examples.

\begin{lemma}\label{An-is-smooth}
For all $k:\N$, we have that $\A^k$ is smooth.
\end{lemma}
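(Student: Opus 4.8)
The plan is to observe first that $\A^k$ is an affine scheme --- it is $\Spec R[X_1,\dots,X_k]$, and as a type it is just $R^k$ --- and hence in particular a scheme, so that the real content of the lemma is the formal smoothness of $\A^k$. By \Cref{def-etale-closed-dense} this amounts to showing that for every closed dense proposition $P$ the evaluation map $\A^k\to(\A^k)^P$ is surjective.

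So fix a closed dense proposition $P$, say $P=\Spec(R/N)$ with $N\subseteq R$ a finitely generated nilpotent ideal, and identify $\A^k$ with $R^k$. Then $(\A^k)^P\simeq(R^P)^k$, and under this identification the evaluation map is the $k$-fold power of the evaluation map $R\to R^P$; by \Cref{duality-axiom} the latter is, up to isomorphism, the quotient map $R\to R/N$. Now the projection onto a set quotient is always surjective in the sense of homotopy type theory, and a finite power of a surjection is again surjective (by induction on $k$, using that surjectivity is a proposition, so that from a point of $(R/N)^k$ one may simultaneously extract preimages of all $k$ coordinates). Hence $\A^k\to(\A^k)^P$ is surjective. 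Alternatively, writing $R^{k}\simeq\sum_{x:R}R^{k-1}$ and invoking \Cref{smooth-sigma-closed} together with formal smoothness of $\A^0=1$, one reduces by induction on $k$ to the case $k=1$, i.e.\ again to the surjectivity of $R\to R/N$.

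There is essentially no obstacle here; the only points to keep track of are the application of \Cref{duality-axiom} to identify $R^{\Spec(R/N)}$ with $R/N$ (so that $\A^k\to(\A^k)^P$ becomes, up to isomorphism, a finite product of quotient maps), and the elementary fact that a set-quotient map, and hence a finite product of such, is a surjection of types.
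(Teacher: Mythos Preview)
Your argument is correct and is essentially the paper's proof unpacked: the paper simply invokes the universal property of the polynomial ring $R[X_1,\dots,X_k]$ to note that any $R$-algebra map $R[X_1,\dots,X_k]\to R/N$ lifts to $R$ by lifting each generator along the surjection $R\to R/N$, which is precisely your coordinatewise argument after applying duality. Your alternative via \Cref{smooth-sigma-closed} is also fine but not needed.
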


\begin{proof}
  Let $P$ be a closed dense proposition and $N$ a nilpotent, finitely generated ideal such that $P=\Spec(R/N)$.
  Since $\Spec(R[X_1,\dots,X_k])=\A^k$, to prove $\A^k$ smooth we just need to find dotted lifts in:
  \begin{center}
    \begin{tikzcd}
      R/N & R[X_1,\dots,X_k]\ar[l]\ar[ld,dashed]  \\
      R\ar[u] & 
    \end{tikzcd}
  \end{center}
  This is easy using the universal property of $R[X_1,\dots,X_k]$.
\end{proof}

\begin{example}
The affine scheme $\Spec(R[X]/X^2)$ is not smooth.
\end{example}

\begin{proof}
If it were smooth, then for any $\epsilon$ with $\epsilon^3=0$ we would be able to prove $\epsilon^2=0$.
Indeed we would merely have a dotted lift in:
 \begin{center}
      \begin{tikzcd}
        R/(\epsilon^2)& R[X]/(X^2)\ar[l,swap,"\epsilon"]\ar[ld,dashed] \\
        R \ar[u]& 
      \end{tikzcd}
    \end{center}
    that is, an $r:R$ such that $(\epsilon+r\epsilon^2)^2=0$. Then $\epsilon^2=0$.
\end{proof}

\begin{example}
The affine scheme $\Spec(R[X,Y]/XY)$ is not smooth.
\end{example}

\begin{proof}
Again, we assume a lift for any $\epsilon$ with $\epsilon^3=0$:
 \begin{center}
   \begin{tikzcd}
     R/(\epsilon^2) & R[X,Y]/(XY)\ar[l]\ar[ld,dashed] \\
     R\ar[u] & 
   \end{tikzcd}
 \end{center}
 where the top map sends both $X$ and $Y$ to $\epsilon$. Then we have $r,r':R$ such that $(\epsilon+r\epsilon^2)(\epsilon+r'\epsilon^2)=0$ so that $\epsilon^2=0$.
\end{proof}

We will proof a generalization of the following example in \Cref{standard-etale-are-etale}.
The essential step is to improve a zero $g(y)=0$ up to some square-zero $\epsilon$ to an actual zero.

\begin{example}
Let $g$ be a polynomial in $R[X]$ such that for all $x:R$ we have that $g(x)=0$ implies $g'(x)\not=0$. Then $\Spec(R[X]/g)$ is étale.
\end{example}

\subsection{Being formally étale, unramified or smooth is Zariski local}

\begin{lemma}\label{etale-zariski-local}
Let $X$ be a type with $(U_i)_{i:I}$ a finite open cover of $X$. Then $X$ is formally étale (resp. formally unramified, formally smooth) if and only if all the $U_i$ are formally étale (resp. formally unramified, formally smooth).
\end{lemma}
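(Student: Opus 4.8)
The plan is to prove the two directions separately: the forward implication will be a short deduction from the stability results established earlier, and the converse will carry the real content. For the forward direction, I would note that each $U_i$ is an open subtype of $X$, so $U_i \simeq \sum_{x:X}(x\in U_i)$ with every fibre $x\in U_i$ an open proposition, hence $\neg\neg$-stable, hence formally étale by \Cref{not-not-stable-prop-etale} — and therefore also formally unramified and formally smooth. If $X$ is formally étale (resp.\ unramified, smooth), then $U_i$, being a $\Sigma$-type with formally étale (resp.\ unramified, smooth) base and fibres, is formally étale (resp.\ unramified, smooth) by the $\Sigma$-closure results of the ``Stability results'' subsection (\Cref{smooth-sigma-closed} in the smooth case). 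The covering hypothesis plays no role here.

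For the converse, assume all $U_i$ are formally étale (resp.\ unramified, smooth), let $P$ be a closed dense proposition, and fix a presentation $P=\Spec(R/N)$ with $N=(r_1,\dots,r_m)$ finitely generated and nilpotent; since ``being an equivalence/embedding/surjection'' is a proposition, this chosen presentation is harmless. Given $g:P\to X$, the key step is to show that $g$ \emph{merely factors through some $U_i$}. To this end I would argue that for each $i$ the proposition ``$g$ factors through $U_i$'', i.e.\ $\prod_{p:P}(g(p)\in U_i)$, is \emph{open}: the subset $\{p:P \mid g(p)\in U_i\}$ of the affine scheme $P$ is pointwise open, hence a finite union $\bigcup_a D(f_{i,a})$ of basic opens for some $f_{i,a}:R^P = R/N$; this subset equals all of $P$ iff $(f_{i,1},\dots,f_{i,k_i})=R/N$ (a scheme $\Spec C$ is empty iff $C=0$, by duality); and lifting the $f_{i,a}$ to $\hat f_{i,a}:R$, this says $(\hat f_{i,1},\dots,\hat f_{i,k_i},r_1,\dots,r_m)=R$, which by locality of $R$ holds iff one of these generators is invertible — and no nilpotent $r_j$ can be, as $R$ is nontrivial — so it reduces to the open proposition $\hat f_{i,1}\neq 0\vee\dots\vee\hat f_{i,k_i}\neq 0$. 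Since the $U_i$ cover $X$ and $P$ is a proposition, $P$ implies $\bigvee_i(g\text{ factors through }U_i)$; this is a finite disjunction of open propositions, hence open, hence $\neg\neg$-stable, and since $P$ is dense (so $\neg\neg P$ holds) the disjunction holds, giving the key step.

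Having fixed a (mere) index $i$ and a factorisation $g=\iota_i\circ g_i$ with $\iota_i:U_i\hookrightarrow X$, I would conclude as follows. Every $x:X$ lifting $g$ along $P\to 1$ automatically lies in $U_i$: we have $P\to(x\in U_i)$ because $g$ factors through $U_i$, while $x\in U_i$ is $\neg\neg$-stable and $\neg\neg P$ holds. Consequently the type of lifts of $g$ in $X$ is equivalent, via the embedding $\iota_i$, to the type of lifts of $g_i$ in $U_i$. In the smooth case this transfers mere inhabitedness of the latter (which holds since $U_i$ is formally smooth) to the former, so $X\to X^P$ is surjective; in the unramified (resp.\ étale) case, the type of lifts of $g_i$ in $U_i$ is a proposition (resp.\ contractible) since $U_i$ is formally unramified (resp.\ étale), and as ``being a proposition/contractible'' is itself a proposition the mere choice of $i$ still lets us conclude that the type of lifts of $g$ in $X$ is a proposition (resp.\ contractible), i.e.\ $X\to X^P$ is an embedding (resp.\ equivalence). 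The main obstacle is precisely the openness of ``$g$ factors through $U_i$'': this is what makes the finite disjunction over $i$ be $\neg\neg$-stable and hence forced by the density of $P$, and it rests on two inputs from the theory — that pointwise-open subsets of affine schemes are finite unions of basic opens (a consequence of Zariski-local choice), and that by locality and nontriviality of $R$ the nilpotent generators of $N$ drop out of the relevant unit-ideal condition.
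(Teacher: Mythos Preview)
Your proof is correct but follows a genuinely different route from the paper's for the converse direction.

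The paper treats the unramified and smooth converses by separate structural arguments. For unramified it uses the characterisation via identity types: picking $i$ with $x\in U_i$, one has $x=_X y \leftrightarrow \sum_{y\in U_i}(x=_{U_i}y)$, which is formally \'etale since open propositions are. For smooth it notes that $\sum_{i:I}U_i$ is formally smooth (a finite sum of formally smooth types, via \Cref{finite-are-etale} and \Cref{smooth-sigma-closed}) and then applies \Cref{smoothSurjective} to the surjection $\sum_i U_i\to X$; the \'etale case is obtained by combining the two.

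Your approach is uniform: you reduce every lifting problem $g:P\to X$ to one in a single chart by proving that ``$g$ factors through $U_i$'' is an open proposition, so that the finite disjunction over $i$ is $\neg\neg$-stable and hence forced by density of $P$. One argument then handles all three notions, and you even get the pleasant side fact that any $P\to X$ with $P$ closed dense already lands in one chart. The cost is that your key step invokes more of the ambient axioms---Zariski-local choice (to write the open preimage in the affine $P$ as a basic open $D(f_1,\dots,f_k)$) and locality plus nontriviality of $R$ (to turn the unit-ideal condition into an honest open proposition)---whereas the paper's proof stays within the modality-theoretic closure lemmas already established and \Cref{smoothSurjective}.
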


\begin{proof}
First, we show this for formally unramified:
\begin{itemize}
\item Any subtype of a formally unramified type is formally unramified by \Cref{prop-are-unramified}, so $X$ formally unramified implies $U_i$ formally unramified.
\item Conversely, is each $U_i$ is formally unramified, then for all $x,y:X$ we need to prove $x=_Xy$ formally étale. But there exists $i:I$ such that $x\in U_i$ and then:
\[x=_Xy \leftrightarrow \sum_{y\in U_i} x=_{U_i}y \]
which is formally étale because open propositions are formally étale by \Cref{not-not-stable-prop-etale}.
\end{itemize}
Now for formally smooth:
\begin{itemize}
\item Open propositions are formally smooth by \Cref{not-not-stable-prop-etale} so that open subtypes of formally smooth types are formally smooth.
\item Conversely if each $U_i$ is formally smooth then $\Sigma_{i:I}U_i$ is formally smooth by \Cref{finite-are-etale}, so we can conclude that $X$ is formally smooth by applying \Cref{smoothSurjective} to the surjection $\Sigma_{i:I}U_i \twoheadrightarrow X$.
\end{itemize}
The result for formally étale immediately follows.
\end{proof}

\begin{corollary}
For all $k:\N$, the projective space $\bP^k$ is smooth.
\end{corollary}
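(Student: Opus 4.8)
The plan is to deduce this from \Cref{An-is-smooth} and \Cref{etale-zariski-local}, so the argument is short. First I would recall that $\bP^k$ is a scheme precisely because it carries the finite open cover by the standard charts $U_i := \{[x_0:\cdots:x_k] \mid x_i \neq 0\}$ for $i = 0,\dots,k$, and that each $U_i$ is isomorphic to $\A^k$ through the chart map sending $[x_0:\cdots:x_k]$ to the tuple of ratios $x_j/x_i$ with $j \neq i$, whose inverse rescales a tuple so that its $i$-th entry becomes $1$. (These are the same charts that are used to exhibit $\bP^k$ as a scheme in the first place.)

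Next I would invoke \Cref{An-is-smooth} to get that $\A^k$ is smooth, hence in particular formally smooth; transporting along the isomorphisms $U_i \cong \A^k$ then shows that each $U_i$ is formally smooth. Since $(U_i)_{i=0,\dots,k}$ is a finite open cover of $\bP^k$, \Cref{etale-zariski-local} yields that $\bP^k$ is formally smooth. As $\bP^k$ is a scheme, it is therefore smooth.

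I expect no real obstacle here beyond the identification $U_i \cong \A^k$, but this is exactly the computation that exhibits $\bP^k$ as a scheme, so nothing new is required. In fact the same argument applies verbatim to any scheme admitting a finite open cover by copies of $\A^k$.
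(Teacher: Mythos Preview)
Your argument is correct and matches the paper's own proof: reduce via \Cref{etale-zariski-local} to the standard affine charts $U_i\cong\A^k$ and invoke \Cref{An-is-smooth}. The paper states this in a single line without spelling out the chart isomorphism, but the approach is identical.
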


\begin{proof}
By \Cref{etale-zariski-local} it is enough to check that $\A^k$ is smooth. This is \Cref{An-is-smooth}
\end{proof}

\section{Linear algebra and tangent spaces}
In the context of this article these section contains auxiliary definitions and results, although they can be interesting in their own right. They focus on how tangent (resp. cotangent) spaces work for schemes, in this process we give general results about finitely copresented (resp. finitely presented) modules. Of particular interest is the duality between these two class of modules.

\subsection{Modules and infinitesimal disks}

The most basic infinitesimal schemes are the first order neighbourhoods in affine n-space $R^n$. Their algebra of functions is $R^{n+1}$, which is an instance of the more general construction below.

For any $R$-module $M$, there is an $R$-algebra structure on $R\oplus M$ with multiplication given by:
\[(r,m)(r',m') = (rr',rm'+r'm)\]
Algebras of this form are called \emph{square zero extensions} of $R$, since products of the form $(0,m)(0,n)$ are zero.
By this property, for any $R$-linear map $\varphi:M\to N$ between modules $M,N$, the map $\mathrm{id}\oplus \varphi: R\oplus M\to R\oplus N$ is an $R$-algebra homomorphism. In particular, if $M$ is finitely presented, i.e.\ merely the cokernel of some $p:R^n\to R^m$ then $R\oplus M$ is the cokernel of a map between finitely presented algebras and therefore finitely presented as an algebra. 

\begin{definition}
  \index{$\D(M)$}
Given $M$ a finitely presented $R$-module, we define a finitely presented algebra structure on $R\oplus M$ as above and define:
\[\D(M) = \Spec(R\oplus M)\]
This is a pointed scheme by the first projection which we denote $0$
and the construction is functorial by the discussion above.
\end{definition}

We write $\D(n)$ for $\D(R^n)$ so that for example:
\[\D(1) = \Spec(R[X]/(X^2)) = \{\epsilon:R\ |\ \epsilon^2=0\}\]

\begin{definition}\label{derivation-pointwise}
Assume given $M$ a finitely presented $R$-module and $A$ a finitely presented $R$-algebra with $x:\Spec(A)$. An $M$-derivation at $x$ is a morphism of $R$-modules:
\[d:A\to M\]
such that for all $a,b:A$ we have that:
\[d(ab) = a(x)d(b) + b(x)d(a)\]
\end{definition}

\begin{lemma}\label{tangent-are-derivation}
Assume given $M$ a finitely presented module and $A$ a finitely presented algebra with $x:\Spec(A)$. Pointed maps:
\[\D(M)\to_\pt (\Spec(A),x)\] 
correspond to $M$-derivations at $x$.
\end{lemma}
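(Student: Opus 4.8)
The plan is to unwind both sides of the correspondence into concrete algebraic data and exhibit a bijection. A pointed map $\D(M) \to_\pt (\Spec A, x)$ is, by the duality axiom, the same as an $R$-algebra homomorphism $\psi : A \to R \oplus M$ which is pointed in the sense that composing with the projection $R \oplus M \to R$ (dual to the basepoint $0 : \D(M)$) recovers the point $x : \Spec A$, i.e.\ $\pi_1 \circ \psi = x$ as $R$-algebra maps $A \to R$. So I would first note that any such $\psi$ has the form $\psi(a) = (x(a), d(a))$ for a unique function $d : A \to M$, simply because $\pi_1 \circ \psi = x$ forces the first component. The content is then to check that $\psi$ being an $R$-algebra homomorphism is equivalent to $d$ being an $M$-derivation at $x$ in the sense of \Cref{derivation-pointwise}.

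The key computation is the multiplicativity condition. Using the square-zero multiplication $(r,m)(r',m') = (rr', rm' + r'm)$ on $R \oplus M$, we have
\[
\psi(a)\psi(b) = (x(a), d(a))(x(b), d(b)) = (x(a)x(b),\ x(a)d(b) + x(b)d(a)),
\]
while $\psi(ab) = (x(ab), d(ab)) = (x(a)x(b), d(ab))$ since $x$ is already a ring homomorphism. Comparing second components, $\psi$ is multiplicative if and only if $d(ab) = x(a)d(b) + x(b)d(a)$ for all $a,b : A$, which is exactly the derivation condition written $a(x)d(b) + b(x)d(a)$ in the paper's notation. Similarly, $\psi$ is $R$-linear (and additive, and unital) exactly when $d$ is $R$-linear: additivity and $R$-linearity of $\psi$ in the second component is the $R$-linearity of $d$, and preservation of $1$ is automatic since the first component already handles it and $d(1) = 0$ follows from the derivation rule applied to $1 \cdot 1$. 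I would spell this out as a short chain of equivalences: pointed $\psi$'s $\leftrightarrow$ functions $d$ with $\psi(a) = (x(a), d(a))$ an algebra map $\leftrightarrow$ $R$-linear $d$ satisfying the Leibniz rule at $x$ $\leftrightarrow$ $M$-derivations at $x$.

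I do not expect any real obstacle here; the statement is essentially a repackaging of the universal property of square-zero extensions, and the only thing to be careful about is bookkeeping the pointedness condition correctly — that the basepoint $0 : \D(M) = \Spec(R \oplus M)$ is dual to the projection $\pi_1 : R \oplus M \to R$, so that "pointed map over $x$" translates precisely to "$\pi_1 \circ \psi = x$". Everything else is the routine verification above, and the finite presentation hypotheses on $M$ and $A$ are only needed so that the duality axiom applies to identify maps of schemes with maps of algebras in the first place.
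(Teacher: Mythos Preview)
Your proof is correct and follows essentially the same approach as the paper: translate pointed maps to algebra maps $A \to R\oplus M$ over the first projection, write such a map as $a \mapsto (a(x), d(a))$, and check that the algebra-homomorphism conditions are equivalent to $d$ being an $M$-derivation at $x$. You spell out the multiplicativity computation and the role of the finite-presentation hypotheses more explicitly than the paper does, but the argument is the same.
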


\begin{proof}
Such a pointed map correponds to an algebra map:
\[f : A\to R\oplus M\]
where the composite with the first projection is $x$. This means that, for some module map $d:A\to M$ we have:
\[f(a) = (a(x),d(a))\]
We can immediately see that $f$ being a map of $R$-algebras is equivalent to $d$ being an $M$-derivation at $x$.
\end{proof}

\begin{lemma}\label{equivalence-module-infinitesimal}
Let $M$, $N$ be finitely presented modules. Then linear maps from $M$ to $N$ correspond to
pointed maps from $\D(N)$ to $\D(M)$. 
\end{lemma}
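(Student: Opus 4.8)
The plan is to reduce the statement, via the anti-equivalence between finitely presented $R$-algebras and affine schemes coming from the duality axiom, to a direct computation with the square-zero multiplication on $R\oplus M$ and $R\oplus N$.

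First I would recall that for finitely presented $R$-algebras $A$, $B$, precomposition gives an equivalence between $R$-algebra homomorphisms $A\to B$ and maps of schemes $\Spec B\to\Spec A$: an $R$-algebra map $A\to R^{\Spec B}$ is, by duality, the same datum as a function $\Spec B\to\Spec A$, and $B\cong R^{\Spec B}$. Applying this with $A=R\oplus M$ and $B=R\oplus N$, maps $\D(N)\to\D(M)$ correspond to $R$-algebra homomorphisms $g:R\oplus M\to R\oplus N$, and under this correspondence the basepoint $0:\D(M)$ (resp. $0:\D(N)$) is the first projection $R\oplus M\to R$ (resp. $R\oplus N\to R$). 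Hence a pointed map $\D(N)\to_\pt\D(M)$ is exactly an algebra homomorphism $g$ whose composite with the first projection $R\oplus N\to R$ equals the first projection $R\oplus M\to R$.

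Then I would check that such $g$ are precisely the maps $\mathrm{id}\oplus\psi$ for $R$-linear $\psi:M\to N$. One direction is already in the text: $\mathrm{id}\oplus\psi$ is an $R$-algebra homomorphism and it obviously commutes with the first projections. Conversely, given $g$, the basepoint condition forces $g(r,m)=(r,\psi(r,m))$ for some $\psi:R\oplus M\to N$; this $\psi$ is $R$-linear because $g$ is, and unit preservation $g(1,0)=(1,0)$ gives $\psi(1,0)=0$, whence $\psi(r,0)=r\,\psi(1,0)=0$ and so $\psi(r,m)=\psi(0,m)$ depends only on $m$, defining an $R$-linear map $M\to N$. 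These two assignments are visibly mutually inverse — compatibility with the projection together with $\mathrm{id}\oplus\psi$ being the value already pins $g$ down on all of $R\oplus M$ — so we obtain the claimed equivalence; multiplicativity of $g$ imposes no further constraint, as the text observes before the definition of $\D$.

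The only step requiring any care is the first one: correctly identifying a pointed map of affine schemes with a basepoint-preserving algebra homomorphism, in particular getting the variance right and using that $\Spec$ is fully faithful on finitely presented $R$-algebras. This is an immediate consequence of the duality axiom, and everything after it is a routine unwinding of the square-zero multiplication.
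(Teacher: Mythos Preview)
Your proof is correct and takes essentially the same approach as the paper. The only difference is packaging: the paper factors the argument through \Cref{tangent-are-derivation}, identifying pointed maps $\D(N)\to_\pt\D(M)$ with $N$-derivations $d:R\oplus M\to N$ at $0$, whereas you inline that step and work directly with the pointed algebra homomorphism $g:R\oplus M\to R\oplus N$ and its second component; the reduction to a linear map $M\to N$ by showing the $R$-summand contributes trivially is the same computation in both.
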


\begin{proof}
By \Cref{tangent-are-derivation} such a pointed map corresponds to an $N$-derivation at $0:\D(M)$.

Such a derivation is a morphism of modules:
\[d:R\oplus M\to N\]
such that for all $(r,m),(r',m'):R\oplus M$ we have that:
\[d(rr',rm'+r'm) = rd(r',m')+r'd(r,m)\]
This implies $d(r,0) = 0$ for all $r : R$, so such a map is entirely determined by the linear map $m\mapsto d(0,m)$. Conversly given a linear map $f:M\to N$ we can check that $(r,m)\mapsto f(m)$ is such a derivation, giving the correspondence.
\end{proof}

\subsection{Tangent spaces}
  \label{remark-sym-dual}

In traditional algebraic geometry, the tangent sheaf is defined as the dual of the cotangent sheaf which is given Zariski locally by the universal module of derivations. We could copy this approach synthetically, but contrary to the traditional picture, we can also define the tangent bundle directly and dualize to get the usual cotangent bundle. This mismatch with the traditional theory comes from the fact that the traditional dualization of $\mathcal O_X$-module sheaves is not the same as our dualization of $R$-module bundles.

We start with the definition of tangent spaces which is also used in synthetic differential geometry:

\begin{definition}
Let $X$ be a type with $x:X$, then we define the \notion{tangent space} $T_x(X)$ \index{$T_x(X)$} of $X$ at $x$ by:
\[T_x(X) = \{t:\D(1)\to X\ |\ t(0)=x\}\]
\end{definition}

\begin{definition}
Given $f:X\to Y$ and $x:X$ we have a map\index{$df_x$}:
\[df_x : T_x(X)\to T_{f(x)}(Y)\]
induced by post-composition.
\end{definition}

\begin{lemma}\label{An-dimension-n}
For all $x:R^n$ we have $T_x(R^n) = R^n$.
\end{lemma}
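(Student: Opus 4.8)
The statement $T_x(R^n) = R^n$ unwinds to the claim that the type $\{t : \D(1) \to R^n \mid t(0) = x\}$ is equivalent to $R^n$. The plan is to reduce to the case $n = 1$ and then apply the duality axiom directly. First I would note that $R^n$ is, by definition, the $n$-fold product $\prod_{i=1}^n R$, so $\D(1) \to R^n$ is $\prod_{i=1}^n (\D(1) \to R)$, and the basepoint condition $t(0) = x$ decomposes componentwise as $t_i(0) = x_i$. Hence $T_x(R^n) \simeq \prod_{i=1}^n T_{x_i}(R)$, and it suffices to produce an equivalence $T_{x_0}(R) \simeq R$ for each $x_0 : R$.

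For the one-dimensional case, observe that $\D(1) = \Spec(R[X]/(X^2))$, so by the duality axiom the type $\D(1) \to R$ is identified with the ring $R[X]/(X^2)$, which as an $R$-module is $R \oplus R$, with an element written $a + b\epsilon$ (where $\epsilon$ is the image of $X$); evaluation at $0$ picks out the coefficient $a$. Therefore $\{t : \D(1) \to R \mid t(0) = x_0\}$ is equivalent to $\{a + b\epsilon \mid a = x_0\}$, which is equivalent to $R$ via $b \mapsto x_0 + b\epsilon$. Composing, $T_x(R^n) \simeq \prod_{i=1}^n R = R^n$. One can also phrase this uniformly: $\D(1)$ is the pointed scheme $\D(R)$, and a pointed map $\D(1) \to_\pt (R^n, x)$ — equivalently a non-pointed map with prescribed value at $0$ — corresponds by Kock--Lawvere style reasoning (a special case of \Cref{tangent-are-derivation}, taking $A = R[X_1,\dots,X_n]$ and $M = R$) to an $R$-derivation $R[X_1,\dots,X_n] \to R$ at $x$, and such derivations are freely determined by their values on the generators $X_i$, giving $R^n$.

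I do not expect any serious obstacle here; the only thing to be careful about is matching up the basepoint data correctly — the tangent space is defined via \emph{unpointed} maps $t$ together with a proof $t(0) = x$, so one should check that the fibration $\{t : \D(1) \to R^n\} \to R^n$ given by $t \mapsto t(0)$ has the total space equivalent to $R^n \times R^n$ under duality, with the fiber over $x$ being the translate $x + R^n\epsilon \simeq R^n$. The functoriality remarks and the characterization of $df_x$ by postcomposition are not needed for the bare equality, but it is worth recording that under this identification $df_x$ for a linear map becomes the same linear map, which will be used downstream.
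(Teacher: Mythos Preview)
Your proposal is correct. Your primary route --- reduce to $n=1$ componentwise and then read off the fiber of $R^{\D(1)}\simeq R[X]/(X^2)\to R$ over $x_0$ directly from the duality axiom --- is a slightly different path from the paper's. The paper instead first uses homogeneity of $R^n$ to assume $x=0$, then invokes \Cref{tangent-are-derivation} to identify $T_0(R^n)$ with $R$-derivations $R[X_1,\dots,X_n]\to R$ at $0$, and observes these are determined by their values on the $X_i$. Your alternative ``uniform'' phrasing at the end is exactly this argument (without the harmless homogeneity reduction). The componentwise approach is a touch more elementary since it bypasses \Cref{tangent-are-derivation} and uses only the Kock--Lawvere instance of duality; the derivation approach has the advantage that it generalizes immediately to $T_x(\Spec A)$ for arbitrary finitely presented $A$, which is what the paper needs next in \Cref{tangent-finite-copresented}.
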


\begin{proof}
Since $R^n$ is homogeneous we can assume $x=0$. By \Cref{tangent-are-derivation} we know that $T_0(R^n)$ corresponds to the type of linear maps
\[R[X_1,\cdots,X_n] \to R\]
such that for all $P,Q$ we have:
\[d(PQ) = P(0)dQ + Q(0)dP\]
which is equivalent to $d(1) = 0$ and $d(X_iX_j) = 0$, so any such map is determined by its image on the $X_i$ so it is equivalent to an element of $R^n$.
\end{proof}

\begin{lemma}\label{from-D1-to-D2}
Given a scheme $X$ with $x:X$ and $v,w:T_x(X)$, there exists a unique:
\[\psi_{v,w} : \D(2)\to_\pt X\]
such that for all $\epsilon:\D(1)$ we have that:
\[\psi_{v,w}(\epsilon,0) = v(\epsilon)\]
\[\psi_{v,w}(0,\epsilon) = w(\epsilon)\]
\end{lemma}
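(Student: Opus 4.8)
The plan is to reduce to the case $X=\Spec A$ affine and then translate the whole statement into derivations via \Cref{tangent-are-derivation}. Since the assertion is that the type of such lifts $\psi$ is contractible, and contractibility is a proposition, I may begin by choosing a merely-existing affine open $U\subseteq X$ with $x:U$.

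First I would check that every pointed map $\D(M)\to_\pt(X,x)$ (in particular for $M=R$ and $M=R^2$) factors through any affine open $U\ni x$: for $t:\D(M)\to X$ with $t(0)=x$, the preimage $t^{-1}(U)$ is an open subset of $\D(M)=\Spec(R\oplus M)$ containing $0$, and the only such open subset is $\D(M)$ itself. Indeed, any $f\in R\oplus M$ with $f(0)$ invertible is invertible, since $f-f(0)$ lies in the square-zero ideal $0\oplus M$ and is thus nilpotent; together with $R$ being local ($f(0)\neq 0$ iff $f(0)$ invertible) and the identification of pointwise-open subsets of an affine scheme with unions of basic opens $D(f)$, this forces any open through $0$ to be everything. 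As $U\hookrightarrow X$ is moreover an embedding, postcomposition with it is an equivalence on pointed maps out of $\D(1)$ and out of $\D(2)$, compatibly with restriction along the two axis inclusions $\D(1)\to_\pt\D(2)$. Hence it suffices to treat $X=\Spec A$.

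For the affine case, recall $\D(2)=\Spec(R\oplus R^2)$, so by \Cref{tangent-are-derivation} pointed maps $\D(2)\to_\pt(\Spec A,x)$ are exactly $R^2$-derivations at $x$, i.e.\ $R$-linear $d:A\to R^2$ with $d(ab)=a(x)d(b)+b(x)d(a)$. Composing with the two projections $R^2\to R$ sends $d$ to a pair $(d_1,d_2)$ of $R$-derivations at $x$, and a pair $(d_1,d_2)$ reassembles to $d=(d_1,d_2)$; this is readily checked to be a bijection. By \Cref{tangent-are-derivation} once more (now with $M=R$), $R$-derivations at $x$ are precisely the elements of $T_x(\Spec A)$, and by \Cref{equivalence-module-infinitesimal} the two axis inclusions $\D(1)\to_\pt\D(2)$ correspond to the two projections $R^2\to R$; hence precomposing a $\psi$ with them matches exactly the assignment $d\mapsto(d_1,d_2)$. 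Concatenating these equivalences gives, for each $v,w:T_x(\Spec A)$, a unique $\psi$ with $\psi(\epsilon,0)=v(\epsilon)$ and $\psi(0,\epsilon)=w(\epsilon)$ for all $\epsilon:\D(1)$, where function extensionality turns the pointwise conditions into equalities of pointed maps out of $\D(1)$.

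I expect the only nonformal step to be the reduction to the affine case, specifically the fact that an infinitesimal disk $\D(M)$ admits no proper open subscheme through $0$: this genuinely uses Zariski-local choice (to turn pointwise openness into honest functions on $\D(M)$) together with locality of $R$. Everything after that is just bookkeeping with the equivalences already established in this section.
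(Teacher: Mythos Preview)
Your proof is correct and follows the same strategy as the paper: reduce to the affine case, then identify pointed maps $\D(2)\to_\pt(X,x)$ with $R^2$-derivations at $x$ via \Cref{tangent-are-derivation} and observe that these split as pairs of $R$-derivations, which are exactly $v$ and $w$. Your reduction to the affine case is more elaborate than needed---rather than invoking Zariski-local choice and locality to classify opens of $\D(M)$, it suffices to note that every point of $\D(M)$ is $\neg\neg$-equal to $0$, so its image under $t$ lies in any $\neg\neg$-stable (in particular open) neighbourhood of $x$.
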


\begin{proof}
We can assume $X$ is affine. Then $\D(2)\to_\pt X$ is equivalent to the type of $R^2$-derivations at $x$, but giving an $M\oplus N$-derivation is equivalent to giving an $M$-derivation and an $N$-derivation. Checking the equalities is a routine computation.
\end{proof}

\begin{lemma}
For any scheme $X$ and $x:X$, we have that $T_x(X)$ is a module.
\end{lemma}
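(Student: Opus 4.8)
The plan is to put explicit operations on $T_x(X)$ that make sense for an arbitrary scheme (indeed an arbitrary type), and then check the module laws by reducing to the affine case, where $T_x(X)$ is literally a module of derivations.

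First I would define the operations. Scalar multiplication by $r:R$ is precomposition with $\epsilon\mapsto r\epsilon:\D(1)\to\D(1)$, which is well defined since $(r\epsilon)^2 = r^2\epsilon^2 = 0$. Addition of $v,w:T_x(X)$ is $v+w := \psi_{v,w}\circ\delta$, where $\psi_{v,w}:\D(2)\to_\pt X$ is the map from \Cref{from-D1-to-D2} and $\delta:\D(1)\to\D(2)$ is $\epsilon\mapsto(\epsilon,\epsilon)$ (which lands in $\D(2)=\Spec(R\oplus R^2)$ as soon as $\epsilon^2=0$, and sends $0$ to $0$, so $v+w\in T_x(X)$). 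The zero tangent vector is the constant map $\epsilon\mapsto x$. None of this uses that $X$ is a scheme.

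Next I would reduce to the affine case. Affine schemes are sets and being a set is Zariski-local, so $X$, hence $T_x(X)$, is a set; therefore each module axiom is a proposition, and we may use the affine open $U\subseteq X$ with $x\in U$ that merely exists. Any $t:\D(1)\to X$ with $t(0)=x\in U$ factors through $U$: the preimage $t^{-1}(U)$ is an open subset of $\D(1)$ containing $0$, and since $R^{\D(1)}=R\oplus R$ is a local ring a function on $\D(1)$ that is nonzero at $0$ is a unit, so the only open of $\D(1)$ containing $0$ is $\D(1)$ itself. Hence $T_x(X)=T_x(U)$, compatibly with all three operations by the uniqueness part of \Cref{from-D1-to-D2}; so we may assume $X=\Spec A$.

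Finally, by \Cref{tangent-are-derivation} the set $T_x(\Spec A)$ is the set of $R$-linear $d:A\to R$ with $d(ab)=a(x)d(b)+b(x)d(a)$, which is a submodule of $R^A$ and in particular an $R$-module. To finish I would unwind \Cref{tangent-are-derivation} and the proof of \Cref{from-D1-to-D2}: writing $d_v,d_w:A\to R$ for the derivations of $v,w$, the map $\psi_{v,w}$ corresponds to the $R^2$-derivation $a\mapsto(d_v(a),d_w(a))$, and composing with the algebra homomorphism $R\oplus R^2\to R\oplus R$, $(s,(a,b))\mapsto(s,a+b)$, dual to $\delta$ gives the derivation $d_v+d_w$; similarly $r\cdot v$ corresponds to $r\,d_v$ and the zero vector to the zero derivation. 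So our operations agree with the derivation module structure, and the axioms transfer back to $T_x(X)$. I expect the reduction step — checking that $T_x(X)=T_x(U)$ is compatible with the operations, via the factorization of maps out of $\D(1)$ through opens containing $0$ — to be the only delicate point.
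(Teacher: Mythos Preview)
Your definitions of scalar multiplication and addition via $\psi_{v,w}\circ\delta$ are exactly those in the paper, so the approach is the same. The paper, however, simply writes ``We omit checking that this is a module structure,'' whereas you supply a verification: reduce to an affine open $U\ni x$ using $\neg\neg$-stability of opens to get $T_x(X)=T_x(U)$, identify $T_x(U)$ with the module of $R$-valued derivations via \Cref{tangent-are-derivation}, and then unwind the correspondence (the dual of $\delta$ being $(s,(a,b))\mapsto(s,a+b)$) to see that your operations match the module operations on derivations. This is correct and is precisely the kind of check the paper gestures at but does not carry out; your identification of the only delicate point (compatibility of the operations across $T_x(U)\hookrightarrow T_x(X)$, handled by uniqueness in \Cref{from-D1-to-D2}) is also accurate.
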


\begin{proof}
We could give a conceptual proof similar to \cite[Theorem 4.2.19]{david-orbifolds}. Instead we give a more explicit proof with less technical prerequisites.

We define scalar multiplication by sending $v$ to $t\mapsto v(rt)$. Then for addition of $v,w:T_x(X)$, we define:
\[(v+w)(\epsilon) = \psi_{v,w}(\epsilon,\epsilon)\]
where $\psi_{v,w}$ is defined in \Cref{from-D1-to-D2}. We omit checking that this is a module structure.
\end{proof}

\begin{lemma}
For $f:X\to Y$ a map between schemes, for all $x:X$ the map $df_x$ is a map of $R$-modules.
\end{lemma}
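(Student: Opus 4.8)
The plan is to reduce the statement to the affine case and then to a direct computation using the description of tangent vectors as derivations. First I would note that the claim is local on $X$ (and on $Y$), and more precisely that $df_x$ being $R$-linear is a statement about the single point $x$ and its image $f(x)$, so by choosing affine open neighbourhoods $U \ni x$ and $V \ni f(x)$ with $f(U) \subseteq V$ we may assume $X = \Spec A$ and $Y = \Spec B$ are affine with finitely presented algebras, and $f$ corresponds to an $R$-algebra homomorphism $\varphi : B \to A$. Under the identification of \Cref{tangent-are-derivation}, a vector $v : T_x(X)$ corresponds to an $R$-derivation $d_v : A \to R$ at $x$ (meaning $d_v(ab) = a(x)d_v(b) + b(x)d_v(a)$), and the map $df_x$ sends $v$ to the tangent vector at $f(x)$ whose derivation is the precomposite $d_v \circ \varphi : B \to R$; this is because $df_x$ is induced by postcomposition with $f : \D(1) \to X$, which on the algebra side is precomposition with $\varphi$.

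The key steps are then: (1) check that $d_v \circ \varphi$ is indeed a derivation at $f(x)$ — this uses that $\varphi$ is an algebra map, so $(d_v\circ\varphi)(\beta\beta') = d_v(\varphi(\beta)\varphi(\beta')) = \varphi(\beta)(x)\,d_v(\varphi(\beta')) + \varphi(\beta')(x)\,d_v(\varphi(\beta)) = \beta(f(x))\,(d_v\circ\varphi)(\beta') + \beta'(f(x))\,(d_v\circ\varphi)(\beta)$, using $\varphi(\beta)(x) = \beta(\varphi^{\mathrm{op}}(x)) = \beta(f(x))$; (2) observe that the module structure on $T_x(X)$ from the previous lemma is, under the derivation correspondence, just the pointwise $R$-module structure on the set of derivations $A \to R$ — scalar multiplication by $r$ sends $d$ to $r \cdot d$ and addition is pointwise addition of derivations; (3) conclude that precomposition with $\varphi$ is manifestly $R$-linear for the pointwise structure, since $(d + d')\circ\varphi = d\circ\varphi + d'\circ\varphi$ and $(r\cdot d)\circ\varphi = r\cdot(d\circ\varphi)$.

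The one point that needs genuine care, rather than being purely formal, is step (2): I must verify that the abstract module structure defined via $\psi_{v,w}$ in the preceding lemmas really does correspond to pointwise addition of derivations. For scalar multiplication this is immediate from the definition $r\cdot v = (t \mapsto v(rt))$, since the derivation of $t\mapsto v(rt)$ is $r\,d_v$. For addition one unwinds $\psi_{v,w}$: by \Cref{from-D1-to-D2} and the proof of \Cref{equivalence-module-infinitesimal}, an $R^2$-derivation at $x$ is a pair of $R$-derivations $(d_v, d_w)$, and $(v+w)(\epsilon) = \psi_{v,w}(\epsilon,\epsilon)$ corresponds to the composite $A \xrightarrow{(d_v,d_w)} R^2 \xrightarrow{+} R$, which is $d_v + d_w$; this is the routine computation I would not belabour. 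Once this compatibility is recorded, $R$-linearity of $df_x$ is formal.

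I expect the main obstacle to be bookkeeping rather than mathematics: tracking the several layers of duality (pointed maps $\D(1)\to X$ versus derivations, schemes versus algebras, and the identification of the module operations) without circular reference to the omitted computation in the module-structure lemma. If one wants to avoid reproving the module structure, an alternative is to cite \cite[Theorem 4.2.19]{david-orbifolds} and its naturality, but the self-contained route above via \Cref{tangent-are-derivation} and \Cref{from-D1-to-D2} is short enough to carry out directly.
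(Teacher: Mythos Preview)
Your proof is correct, but the paper takes a shorter synthetic route that avoids both the affine reduction and the translation to derivations. For additivity, the paper simply observes that $f\circ\psi_{v,w}:\D(2)\to Y$ satisfies the two boundary conditions characterising $\psi_{f\circ v,\,f\circ w}$, so by the \emph{uniqueness} clause of \Cref{from-D1-to-D2} one has $f\circ\psi_{v,w}=\psi_{f\circ v,\,f\circ w}$; evaluating at $(\epsilon,\epsilon)$ then gives $df_x(v+w)=df_x(v)+df_x(w)$ directly. Scalar multiplication is immediate in both approaches. Your argument has the merit of making explicit that the abstract module structure on $T_x(X)$ agrees with the pointwise structure on derivations (a ``routine computation'' the paper omits), and it would generalise cleanly to other algebraic manipulations; the paper's argument, by contrast, never leaves the level of pointed maps from $\D(1)$ and $\D(2)$ and so needs no choice of affine neighbourhoods at all.
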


\begin{proof}
Commutation with scalar multiplication is immediate. Commutation with addition comes by applying uniqueness from \Cref{from-D1-to-D2} to get:
\[f\circ \psi_{v,w} = \psi_{f\circ v,f\circ w}\]
\end{proof}

\begin{lemma}\label{kernel-is-tangent-of-fibers}
For any map $f:X\to Y$ and $x:X$, we have that:
\[
\mathrm{Ker}(df_x) = T_{(x,\refl_{f(x)})}(\mathrm{fib}_f(f(x)))
\]
\end{lemma}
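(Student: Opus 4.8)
The plan is to unfold both sides and observe they are the same $\Sigma$-type. Note first that exponentiating by $\D(1)$ preserves the pullback defining a fiber, so $\mathrm{fib}_f(f(x))^{\D(1)}$ is the fiber of the post-composition map $X^{\D(1)}\to Y^{\D(1)}$ over the constant map $\mathrm{const}_{f(x)}$, and evaluation at $0$ on $\mathrm{fib}_f(f(x))^{\D(1)}$ is the map on fibers induced by the naturality square relating post-composition with $f$ and $f$ itself through evaluation at $0$. Concretely, by the universal property of $\Sigma$-types together with function extensionality, a point of $\mathrm{fib}_f(f(x))^{\D(1)}$ is a pair $(\gamma,p)$ with $\gamma:\D(1)\to X$ and $p:f\circ\gamma=\mathrm{const}_{f(x)}$ a homotopy of maps $\D(1)\to Y$, and evaluation at $0$ sends it to $(\gamma(0),p_0)$, where $p_0:f(\gamma(0))=f(x)$ denotes the evaluation of the homotopy $p$ at $0$.

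Next I would run two unfoldings via the characterization of identity types in $\Sigma$-types. On the one hand, $T_{(x,\refl)}(\mathrm{fib}_f(f(x)))$ is the type of triples $(\gamma,p,w)$ with $w$ a proof that $(\gamma(0),p_0)=(x,\refl_{f(x)})$ in $\mathrm{fib}_f(f(x))$, and such a $w$ unpacks to a path $e:\gamma(0)=x$ together with a witness of $p_0=\mathrm{ap}_f(e)$. On the other hand, for $\mathrm{Ker}(df_x)$ one first notes that the zero of $T_{f(x)}(Y)$ is the constant tangent vector $\mathrm{const}_{f(x)}$ (immediate from the definitions of addition and scalar multiplication on tangent spaces), and that $df_x$, being post-composition with $f$, sends $(\gamma,e):T_x(X)$ to $(f\circ\gamma,\mathrm{ap}_f(e)):T_{f(x)}(Y)$; unpacking the identity $df_x(\gamma,e)=0$ in $T_{f(x)}(Y)=\sum_{\delta:\D(1)\to Y}(\delta(0)=f(x))$ yields a homotopy $q:f\circ\gamma=\mathrm{const}_{f(x)}$ together with a witness of $\mathrm{ap}_f(e)=q_0$. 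After reordering factors and using symmetry of identity types, both $\mathrm{Ker}(df_x)$ and $T_{(x,\refl)}(\mathrm{fib}_f(f(x)))$ are the type of tuples $(\gamma,e,p)$ with $\gamma:\D(1)\to X$, $e:\gamma(0)=x$ and $p:f\circ\gamma=\mathrm{const}_{f(x)}$ subject to the coherence $p_0=\mathrm{ap}_f(e)$; the resulting equivalence is realized by the differential of the first projection $\mathrm{fib}_f(f(x))\to X$, which is a map of $R$-modules restricting to an equivalence onto $\mathrm{Ker}(df_x)$.

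The hard part, such as it is, is keeping the transports straight across the two applications of the $\Sigma$-type identity characterization and correctly pinning down the zero tangent vector — this is where a stray inverse is most likely to creep in — but no genuine difficulty arises. If one is willing to invoke that schemes are $0$-truncated, then $f(\gamma(0))=f(x)$ is a proposition, the coherence $p_0=\mathrm{ap}_f(e)$ is contractible and may simply be dropped, so the comparison becomes immediate; the argument above does not rely on this.
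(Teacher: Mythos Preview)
Your proof is correct. The paper's proof is the one-line categorical version of the same argument: the pointed type $(\mathrm{fib}_f(f(x)),(x,\refl_{f(x)}))$ is the pullback of $(X,x)\to(Y,f(x))\leftarrow(1,*)$ in pointed types, and since the tangent space at the basepoint is just the pointed mapping type out of $(\D(1),0)$, it preserves this pullback; the resulting pullback $T_x(X)\to T_{f(x)}(Y)\leftarrow 0$ is precisely $\mathrm{Ker}(df_x)$. Your $\Sigma$-type unfolding is exactly what one gets by computing this pullback of pointed mapping types by hand. The paper's version is terser and foregrounds the conceptual reason, while yours has the merit of making the coherence $p_0=\mathrm{ap}_f(e)$ explicit and not relying on any truncation hypothesis.
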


\begin{proof}
Indeed we have that $\mathrm{fib}_f(f(x))$ pointed by $(x,\refl_{f(x)})$ is the pullback of:
\[
(X,x) \to (Y,f(x)) \leftarrow (1,*)
\]
in pointed types, and we conclude by exponentiating with $(\mathbb{D}(1),0)$.
\end{proof}

\begin{lemma}\label{tangent-finite-copresented}
Let $X$ be a scheme with $x : X$. Then $T_x(X)$ is a finitely
copresented $R$-module.
\end{lemma}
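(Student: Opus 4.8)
The plan is to reduce to the affine case and there extract $T_x(X)$ as the kernel of a Jacobian matrix. Being finitely copresented is a proposition, and a scheme $X$ merely admits a finite open cover by affine schemes, so we may fix an affine open subtype $U \subseteq X$ with $x \in U$. It then suffices to show (i) that the inclusion $U \hookrightarrow X$ induces an equivalence on tangent spaces at $x$, and (ii) that $T_x(U)$ is finitely copresented.

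For (i): since $U$ is a subtype of $X$, a map $t : \D(1) \to X$ factors through $U$ iff $t(\epsilon) \in U$ for every $\epsilon : \D(1)$, and such a factorization is then unique; so $T_x(U) \simeq T_x(X)$ will follow once we know that every $t : \D(1) \to X$ with $t(0) = x \in U$ in fact lands in $U$. Now the proposition ``$t(\epsilon) \in U$'' is open, hence $\neg\neg$-stable, because $U$ is open in $X$; and every $\epsilon : \D(1)$ satisfies $\neg\neg(\epsilon = 0)$, whence $\neg\neg(t(\epsilon) = x)$ and so $\neg\neg(t(\epsilon) \in U)$, which by $\neg\neg$-stability gives $t(\epsilon) \in U$. (Equivalently: open immersions are formally étale by \Cref{not-not-stable-prop-etale}, and this argument is exactly why formally étale maps induce equivalences on tangent spaces.)

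For (ii), write $U = \Spec A$ with $A = R[X_1,\dots,X_n]/(P_1,\dots,P_l)$. By \Cref{tangent-are-derivation} applied with $M = R$, the tangent space $T_x(U)$ is the $R$-module of derivations at $x$, i.e.\ of $R$-linear maps $d : A \to R$ satisfying $d(ab) = a(x)d(b) + b(x)d(a)$. Such a $d$ is determined by the vector $v := (d(X_1),\dots,d(X_n)) : R^n$, and conversely any $v : R^n$ defines such a derivation on $R[X_1,\dots,X_n]$, which descends to $A$ exactly when $\sum_{i=1}^n \frac{\partial P_j}{\partial X_i}(x)\, v_i = 0$ for all $j$. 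Hence $T_x(U)$ is isomorphic to the kernel of the $R$-linear map $R^n \to R^l$ given by the Jacobian matrix $\left(\frac{\partial P_j}{\partial X_i}(x)\right)_{j,i}$, and a kernel of a map between finite free $R$-modules is finitely copresented.

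The step requiring the most care is the neighbourhood reduction (i): this is where the ``infinitesimality'' of $\D(1)$ and the $\neg\neg$-stability of open propositions are used to confine $\D(1)$-valued points to any open neighbourhood of the image of $0$. Once $X$ is affine, part (ii) is a direct computation using \Cref{tangent-are-derivation}, generalizing \Cref{An-dimension-n} (the case $l = 0$).
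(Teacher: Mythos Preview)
Your proof is correct and reaches the same conclusion as the paper—$T_x(X)$ is the kernel of the Jacobian $R^n\to R^l$—but the route differs slightly. The paper views affine $X$ as the fiber $\mathrm{fib}_P(0)$ of a polynomial map $P:R^m\to R^n$ and then invokes \Cref{kernel-is-tangent-of-fibers} together with \Cref{An-dimension-n} to identify $T_x(X)$ with $\ker(dP_x:R^m\to R^n)$; you instead unfold \Cref{tangent-are-derivation} directly and read off the Jacobian condition on the generators. Your approach is more elementary and self-contained, effectively reproving the relevant instance of \Cref{kernel-is-tangent-of-fibers} by hand; the paper's approach packages the computation into a reusable lemma. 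You also spell out the reduction to the affine case (your step~(i)) via $\neg\neg$-stability of open propositions, which the paper leaves implicit with ``We can assume $X$ affine''—a welcome clarification, and indeed the same argument the paper uses elsewhere (e.g.\ in \Cref{tangent-bundle-scheme}).
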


\begin{proof}
We can assume $X$ affine. For some map $P: R^m\to R^n$ we have $X=\mathrm{fib}_P(0)$.
By applying \Cref{kernel-is-tangent-of-fibers} we know that $T_x(X)$ is the kernel of $dP_x : T_x(R^m)\to T_0(R^n)$ for all $x:X$.
We conclude by \Cref{An-dimension-n}.
\end{proof}

\begin{corollary}
  \label{tangent-bundle-scheme}
  Let $X$ be a scheme, then the tangent bundle $X^{\mathbb{D}(1)}$ is a scheme.
\end{corollary}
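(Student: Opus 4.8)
The plan is to build a finite affine open cover of $X^{\D(1)}$ by pulling back a finite affine open cover $(U_i)_{i:I}$ of $X$ along evaluation at the basepoint $\mathrm{ev}_0 : X^{\D(1)} \to X$, $t \mapsto t(0)$. The fibre of $\mathrm{ev}_0$ over $x$ is $T_x(X)$, so $X^{\D(1)} \simeq \sum_{x:X} T_x(X)$ is the total space of the tangent bundle. The two things to check are that $(\mathrm{ev}_0^{-1}(U_i))_{i:I}$ is a finite open cover of $X^{\D(1)}$ with $\mathrm{ev}_0^{-1}(U_i) \simeq U_i^{\D(1)}$, and that $U_i^{\D(1)}$ is affine.

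For the first point, finiteness and covering are immediate (the $U_i$ cover $X$ and $\mathrm{ev}_0$ is total), and $\mathrm{ev}_0^{-1}(U_i)$ is open in $X^{\D(1)}$ because the proposition ``$t(0) \in U_i$'' is open. The open embedding $U_i \hookrightarrow X$ induces an embedding $U_i^{\D(1)} \hookrightarrow X^{\D(1)}$ whose image is contained in $\mathrm{ev}_0^{-1}(U_i)$; to see that it is all of $\mathrm{ev}_0^{-1}(U_i)$ I would show that any curve $t : \D(1) \to X$ with $t(0) \in U_i$ factors through $U_i$. Pulling $U_i$ back along $t$ gives an open subset $W$ of the affine scheme $\D(1) = \Spec(R[\epsilon]/(\epsilon^2))$ containing $0$, and it suffices to see that $W = \D(1)$. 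By the description of pointwise-open subsets of affine schemes as finite unions of basic opens (\cite{draft}), we may write $W = D(g_1) \cup \dots \cup D(g_k)$ with each $g_l = a_l + b_l\epsilon \in R[\epsilon]/(\epsilon^2)$; since $0 \in W$, some $a_l$ is (merely) invertible, and then $g_l$ is invertible because $b_l\epsilon$ is nilpotent, so already $D(g_l) = \D(1)$.

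For the second point, write $U_i = \Spec A$ with $A = R[X_1,\dots,X_m]/(P_1,\dots,P_n)$, so that $\Spec A = \mathrm{fib}_P(0) \subseteq R^m$ for $P = (P_1,\dots,P_n) : R^m \to R^n$. Using $R^{\D(1)} \simeq R[\epsilon]/(\epsilon^2) \simeq R^2$ (by \Cref{duality-axiom}) we identify $(R^m)^{\D(1)} \simeq R^{2m}$, a pair $(x,v)$ corresponding to the curve $\epsilon \mapsto x + \epsilon v$. Such a curve lands in $\Spec A$ iff $P(x+\epsilon v) = 0$ for all $\epsilon : \D(1)$, which, expanding and using $\epsilon^2 = 0$, means $P(x) = 0$ and $dP_x(v) = 0$; so, in accordance with the proof of \Cref{tangent-finite-copresented}, the fibre over $x$ is $\mathrm{Ker}(dP_x)$. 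These are polynomial conditions on $(x,v)$, hence $U_i^{\D(1)} = \Spec\bigl(R[X_1,\dots,X_m,Y_1,\dots,Y_m]/(P_j(X),\, \textstyle\sum_i (\partial_i P_j)(X)\,Y_i)\bigr)$ is an affine scheme.

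Putting this together, $(\mathrm{ev}_0^{-1}(U_i))_{i:I}$ is a finite open cover of $X^{\D(1)}$ by affine schemes, so $X^{\D(1)}$ is a scheme. I expect the only genuinely nonroutine step to be the claim in the second paragraph -- that an infinitesimal curve whose basepoint lies in an open subscheme lies entirely in it -- which is where the special properties of $R$ (locality, plus the Zariski-local-choice consequence describing open subsets of affine schemes) enter; everything else is the explicit affine computation together with bookkeeping about finite open covers.
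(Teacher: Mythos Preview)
Your argument is correct and matches the paper's second proof almost exactly: cover $X$ by open affines $U_i$, show that the $U_i^{\D(1)}$ form an open cover of $X^{\D(1)}$, and compute $U_i^{\D(1)}$ explicitly as an affine scheme via the same Jacobian presentation you wrote down.

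The one place you work harder than necessary is the step ``an infinitesimal curve whose basepoint lies in an open stays entirely in it.'' The paper dispatches this in one line by $\neg\neg$-stability of open propositions: for $\epsilon:\D(1)$ we have $\neg\neg(\epsilon=0)$, hence $\neg\neg(t(\epsilon)=t(0))$, hence $\neg\neg(t(\epsilon)\in U_i)$, and since $t(\epsilon)\in U_i$ is open it is $\neg\neg$-stable, so $t(\epsilon)\in U_i$. Your route via the basic-open description of open subsets of $\D(1)$ is perfectly valid, but it invokes the Zariski-local-choice consequence where only duality (via $\neg\neg$-stability of opens) is needed.

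The paper also records a shorter independent argument that you might find worth knowing: by \Cref{tangent-finite-copresented} each $T_x(X)$ is finitely copresented, hence an affine scheme (it is cut out of $R^n$ by linear equations), and then $X^{\D(1)}=\sum_{x:X}T_x(X)$ is a dependent sum of schemes over a scheme, which is again a scheme. This bypasses the explicit cover entirely.
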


\begin{proof}
  We give two proofs, the first uses \Cref{tangent-finite-copresented} and the second is a direct computation:
  \begin{enumerate}[(i)]
  \item Any finitely copresented modules is a scheme, indeed it is the set of common zeros of linear functions between finite free modules.
    So by \Cref{tangent-finite-copresented}, all tangent spaces $T_x(X)$ are schemes and:
    \[
      X^{\mathbb{D}(1)}=\sum_{x:X}T_x(X)
    \]
    is a dependent sum of schemes and therefore a scheme.
  \item Let $X$ be covered by open affine $U_1,\dots,U_n$ then $U_1^{\mathbb{D}(1)},\dots,U_n^{\mathbb{D}(1)}$ is an open cover of $X^{\mathbb{D}(1)}$. Indeed given $f:X^{\mathbb{D}(1)}$, by double negation stability of opens we have that $f\in U_i^{\mathbb{D}(1)}$ if and only if $f(0)\in U_i$.
    So we conclude by showing that for any affine $Y=\Spec R[X_1,\dots,X_n]/(f_1,\dots,f_l)$ the tangent bundle $Y^{\mathbb{D}(1)}$ is affine
    by direct computation:
    \begin{align*}
      Y^{\mathbb{D}(1)}&=\Hom_{\Alg{R}}(R[X_1,\dots,X_n]/(f_1,\dots,f_l), R\oplus \epsilon R) \\
                       &= \{(y_1,\dots,y_n):R\oplus \epsilon R \mid \forall i.\, f_i(y_1,\dots,y_n)=0\} \\
                       &= \{(x_1,\dots,x_n,d_1,\dots,d_n):R^{2n} \mid \forall i.\, f_i(x_1,\dots,x_n)=0\text{ and } \sum_j d_j\frac{\partial f_i}{\partial X_j}(x_1,\dots,x_n) =0\} 
    \end{align*}
  \end{enumerate}
\end{proof}

Now we want to define cotangent spaces.

\begin{definition}
  For $M$ an $R$-module, we denote its dual $\Hom_{R}(M,R)$ by $M^\star$.
\end{definition}

\begin{definition}
For $X$ a type with $x : X$, the \notion{cotangent space} of $X$ at $x$ is the dual $T_x^\star(X)$ of the tangent space $T_x(X)$.
\end{definition}

If $X$ is a scheme, then by \cref{tangent-finite-copresented} the cotangent spaces of $X$ are finitely presented.
We will not use the following definition and remark in the rest of this article, but we included them to show the connection with the traditional theory (see \cite[p. 172]{Hartshorne} or \cite[p. 573]{vakil}).

\begin{definition}
For $A$ an $R$-module, there is a universal derivation $d : A \to \Omega_{A/R}$. Elements of $\Omega_{A/R}$ are called \notion{Kähler differentials}.
\end{definition}

More precisely, $\Omega_{A/R}$ is generated as an $A$-module by symbols
$df$ for $f : A$, subject to the relations $d(r\cdot f) = r \cdot df$ for $r : R$ and
$d(fg) = f \cdot dg + g \cdot df$.
It can be seen that if $A$ is finitely presented as an $R$-algebra,
then $\Omega_{A/R}$ is finitely presented as an $A$-module.
Traditionally, the sheaf corresponding to $\Omega_{A/R}$ is the cotangent bundle.
Synthetically, it is enough to show this pointwise on $\Spec(A)$ by \cite[Theorem 8.2.3]{draft}.
To apply this theorem, we first turn $\Omega_{A/R}$ into an $R$-module bundle on $\Spec(A)$ by sending $x:\Spec A$ to $\Omega_{A/R,x}$ the type of $R$-derivations at $x$, as defined in \Cref{derivation-pointwise}. This agrees with tensoring $\Omega_{A/R}$ with $R$ using the evaluation at $x$, which is the general construction used in \cite[Theorem 8.2.3]{draft}.

\begin{remark}
For all $x:\Spec A$, we have $\Omega_{A/R,x} = T_x^\star(X)$ and therefore $\Omega_{A/R} = \prod_{x:\Spec A} T_x^\star(X)$.
\end{remark}

\begin{proof}
We need to show that for $x : X$,
   we have an isomorphism of $R$-modules:
   \[ T^\star_x(X) = \Omega_{A/R,x}=\Omega_{A/R} \otimes_A R\]
By \Cref{tangent-are-derivation} the tangent space $T_x(X)$ corresponds to derivations
$A \to R$, where the $A$-module structure on $R$ is obtained by evaluating at $x$.
By the universal property of Kähler differentials, these derivations correspond to $A$-module maps $\Omega_{A/R} \to R$, or equivalently to elements in $(\Omega_{A/R}\otimes_AR)^\star$. In \Cref{double-dual-identity}, we will see that $M^{\star\star} = M$ for finitely presented $R$-modules $M$, so we can conclude by dualizing.
\end{proof}

\subsection{Infinitesimal neighbourhoods}

\begin{definition}
Let $X$ be a set with $x:X$. The \notion{first order neighborhood} $N_1(x)$ \index{$N_1(x)$}is defined as the set of $y:X$ such that there exists a finitely generated ideal $I\subseteq R$ with $I^{2}=0$ and:
\[I=0 \to x=y\]
\end{definition}

\begin{lemma}\label{first-order-square-zero}
Assume $x,y:R^n$, then $x\in N_1(y)$ if and only if the ideal generated by the $x_i-y_i$ squares to zero.
\end{lemma}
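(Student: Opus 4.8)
The plan is to unfold the definition of $N_1(y)$ and reduce the ``if and only if'' to a purely algebraic statement about finitely generated ideals of $R$ that square to zero. Recall $x \in N_1(y)$ means there merely exists a finitely generated ideal $I \subseteq R$ with $I^2 = 0$ and $I = 0 \to x = y$; here $x = y$ means $x_i = y_i$ for all $i$, which dualizes (via Duality, \Cref{duality-axiom}) to the statement that the ideal $J := (x_1 - y_1, \dots, x_n - y_n)$ is contained in the radical of $I$ — but since both ideals are finitely generated and $I^2 = 0$, ``$I = 0$ implies $J = 0$'' should be made precise as an inclusion of the corresponding closed subschemes, i.e. $\Spec(R/I) \subseteq \Spec(R/J)$ inside $\Spec R$, which is equivalent to $J \subseteq \sqrt{I}$, and since $I^2 = 0$ gives $\sqrt{I} \supseteq I$ but we also need the reverse containment direction carefully.

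First I would prove the easy direction: if $J = (x_1 - y_1, \dots, x_n-y_n)$ satisfies $J^2 = 0$, then taking $I := J$ witnesses $x \in N_1(y)$, since $I = 0$ trivially implies $x_i = y_i$ for all $i$, hence $x = y$ in $R^n$. For the converse, suppose $x \in N_1(y)$, so we merely have a finitely generated $I$ with $I^2 = 0$ and $I = 0 \to x = y$. The implication $I = 0 \to x = y$ says: in the ring $R/I$, we have $x_i = y_i$ for all $i$, i.e. each $x_i - y_i$ maps to $0$ in $R/I$, i.e. $x_i - y_i \in I$ for all $i$. Hence $J \subseteq I$, and therefore $J^2 \subseteq I^2 = 0$, so $J^2 = 0$. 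Since the conclusion ``$J^2 = 0$'' is a proposition, we may discharge the truncation, completing this direction.

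The only subtlety — and the step I expect to be the main (minor) obstacle — is justifying the passage from ``$I = 0 \to x = y$'' to ``$x_i - y_i \in I$''. Classically $I = 0$ in $R/I$ is a tautology, so the hypothesis directly gives $x_i - y_i = 0$ in $R/I$; but one should be slightly careful that ``$I = 0$'' in the hypothesis of $N_1$ refers to the proposition that the ideal $I$ is the zero ideal, interpreted appropriately. The cleanest route is: the hypothesis, applied with the canonical surjection $R \twoheadrightarrow R/I$ (which kills $I$ so that the image of $I$ is zero, hence the image-of-$x$ equals image-of-$y$), yields $x_i - y_i \in I$. Alternatively one can phrase everything scheme-theoretically: $x \in N_1(y)$ iff the closed subscheme $\{\ast\} \hookrightarrow R^n$ at $x$ factors through some square-zero infinitesimal thickening of the point $y$, and the smallest such thickening is precisely $\Spec(R \oplus (J/J^2)\text{-type data})$ — but this is more machinery than needed. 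I would keep the proof at the level of the two short paragraphs above, noting that the content is entirely the inclusion $J \subseteq I \Rightarrow J^2 \subseteq I^2 = 0$ together with minimality of $J$.
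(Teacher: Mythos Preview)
Your approach matches the paper's exactly: reduce both directions to the ideal inclusion $J \subseteq I$ (your naming is swapped relative to the paper, which calls the difference ideal $I$ and the witnessing ideal $J$). The easy direction and the final deduction $J^2 \subseteq I^2 = 0$ are fine.

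The one step to sharpen is the passage from the implication $I = 0 \to x = y$ to the inclusion $J \subseteq I$. Your phrasing ``applied with the canonical surjection $R \twoheadrightarrow R/I$'' treats the implication as something one can base-change along a ring map, but internally it is just a map between two propositions; there is no ``applying'' to do. The clean justification --- and the one the paper gives in a single word, ``by duality'' --- is this: the proposition $I = 0$ \emph{is} the affine scheme $\Spec(R/I)$, so $x_i - y_i : R$ restricts to a function on $\Spec(R/I)$ which the hypothesis forces to be identically zero; since $R^{\Spec(R/I)} = R/I$ by the duality axiom, this means $x_i - y_i \in I$. Equivalently, the implication reads as $\Spec(R/I) \subseteq \Spec(R/J)$ inside $\Spec R = 1$, which dualises to an $R$-algebra map $R/J \to R/I$, hence $J \subseteq I$. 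Note in particular that synthetic duality gives the \emph{exact} inclusion $J \subseteq I$, not merely $J \subseteq \sqrt{I}$ as your opening paragraph suggests; radicals play no role here, and you should drop that detour.
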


\begin{proof}
Let us denote $I$ the ideal generated by the $x_i-y_i$ so that $x=y$ if and only if $I=0$. 

If $I^2=0$ then it is clear that $y\in N_1(x)$.

Conversely if $y\in N_1(x)$ then there is $J$ such that $J^2=0$ and $J=0 \to I=0$. Then by duality we have that $I\subset J$ so that $I^2=0$.
\end{proof}

\begin{lemma}\label{first-order-schemes}
Let $X$ be a scheme with $x:X$. Then $N_1(x)$ is an affine scheme. 
\end{lemma}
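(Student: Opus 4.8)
The plan is to reduce to the affine case and then identify $N_1(x)$ with an explicit affine subscheme of the ambient affine space. First I would observe that being in the first order neighbourhood is a local condition: if $U \subseteq X$ is an open neighbourhood of $x$, then $N_1^X(x) = N_1^U(x)$, since any $y$ with $I = 0 \to x = y$ for $I$ square-zero ideal automatically lies in $U$ (the proposition $y \in U$ is $\neg\neg$-stable, hence formally étale by \Cref{not-not-stable-prop-etale}, and $\neg\neg(x = y)$ holds because $I^2 = 0$ forces $\neg\neg(I = 0)$; so $y \in U$ follows from $x \in U$). Hence we may assume $X$ is affine, say $X = \mathrm{fib}_P(0)$ for some $P : R^m \to R^n$, realized as a subset of $R^m$.

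Next, for $X \subseteq R^m$ affine and $x : X$, I claim $N_1^X(x) = \{\, y : X \mid y \in N_1^{R^m}(x)\,\}$; this is immediate from the definition since the ideal $I$ witnessing membership lives in $R$ and the condition $I = 0 \to x = y$ is the same whether we think of $x, y$ in $X$ or in $R^m$. By \Cref{first-order-square-zero}, $y \in N_1^{R^m}(x)$ holds if and only if the ideal generated by the $y_i - x_i$ squares to zero, i.e.\ if and only if $(y_i - x_i)(y_j - x_j) = 0$ for all $i, j$. Therefore
\[ N_1^X(x) = \{\, y : R^m \mid P(y) = 0 \text{ and } (y_i - x_i)(y_j - x_j) = 0 \text{ for all } i, j\,\}, \]
which is the zero set of a finite system of polynomial equations over $R$, hence an affine scheme: it is $\Spec$ of the finitely presented $R$-algebra $R[Y_1,\dots,Y_m]/(P_k(Y), (Y_i - x_i)(Y_j - x_j))$.

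I expect the only subtlety to be the locality reduction in the first paragraph — namely that $N_1(x)$ computed in a scheme agrees with $N_1(x)$ computed in any affine open around $x$. This rests on the fact that points infinitesimally close to $x$ (in the sense of a square-zero ideal) cannot escape an open neighbourhood, which is exactly the $\neg\neg$-stability of open propositions together with \Cref{not-not-stable-prop-etale}. Once that is in hand the rest is a direct translation through \Cref{first-order-square-zero} and poses no difficulty; one might additionally remark that the resulting algebra is a square-zero-type thickening of the residue data at $x$, but that is not needed for the statement.
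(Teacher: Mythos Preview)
Your proposal is correct and follows essentially the same route as the paper: reduce to an affine open around $x$ by observing $N_1(x)\subseteq U$, then use \Cref{first-order-square-zero} to exhibit $N_1(x)$ as the closed subset of $X\subseteq R^m$ cut out by the equations $(y_i-x_i)(y_j-x_j)=0$. Your justification of the reduction step is more explicit than the paper's (which simply asserts $N_1(x)\subset U$); note that you only need the $\neg\neg$-stability of open propositions there, not the stronger conclusion of \Cref{not-not-stable-prop-etale}.
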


\begin{proof}
If $x\in U$ open in $X$, we have that $N_1(x)\subset U$ so that we can assume $X$ affine. This means $X$ is a closed subscheme $C\subset R^n$. Then by \Cref{first-order-square-zero}, we have that $N_1(x)$ is the type of $y:R^n$ such that $y\in C$ and for all $i,j$ we have that $(x_i-y_i)(x_j-y_j) = 0$, which is a closed subset of $C$ so it is an affine scheme.
\end{proof}

\begin{definition}
A pointed scheme $(X,*)$ is called a \notion{first order (infinitesimal) disk} if for all $x:X$ we have $x\in N_1(*)$.
\end{definition}

\begin{lemma}
  \label{N1-functor}
  $N_1$ extends to a functor from pointed schemes to first order disks.
\end{lemma}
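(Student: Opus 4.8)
The plan is to show that $N_1$ is functorial by establishing two things: that a pointed map $f : (X,*) \to (Y,*)$ between pointed schemes restricts to a map $N_1(*_X) \to N_1(*_Y)$, and that a first order disk is exactly a pointed scheme $(X,*)$ with $X = N_1(*)$, so that the restriction lands where it should. The first point is where the real content is, so I would start there. Given $x \in N_1(*_X)$, by definition there is a finitely generated ideal $I \subseteq R$ with $I^2 = 0$ and $I = 0 \to x = *_X$. I want to produce a finitely generated ideal $J$ with $J^2 = 0$ and $J = 0 \to f(x) = *_Y$. The natural candidate is $J = I$: if $I = 0$ then $x = *_X$, hence $f(x) = f(*_X) = *_Y$ since $f$ is pointed. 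Thus $f(x) \in N_1(*_Y)$ with the \emph{same} witnessing ideal, so $f$ restricts to a map $N_1(*_X) \to N_1(*_Y)$.

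Next I would observe this restriction is a map of \emph{pointed} schemes into a first order disk. It is pointed because $f(*_X) = *_Y$ and $*_X \in N_1(*_X)$ trivially (take $I = 0$). That $N_1(*_X)$ is a scheme — indeed an affine scheme — is exactly \Cref{first-order-schemes}, and that it is a first order disk is immediate from the definition together with the transitivity-type fact that $y \in N_1(x)$ and $x \in N_1(z)$ imply $y \in N_1(z)$: concretely, if $y \in N_1(*_X)$ is witnessed by $I$ and $x \in N_1(*_X) = N_1(*_X)$ by $I'$, then... actually the cleaner statement I need is just that every point of $N_1(*_X)$ is in $N_1(*_X)$, which is a tautology, so $(N_1(*_X), *_X)$ is a first order disk by definition. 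Functoriality — preservation of identities and composites — is then automatic, since the restricted map is literally the original map $f$ restricted to a subtype, and restriction of maps to subtypes strictly preserves identities and composition.

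The one step that requires a small argument rather than a tautology is checking that the restriction $N_1(*_X) \to N_1(*_Y)$ is well-defined at the level of the \emph{subtype inclusion into $Y$} — i.e.\ that the propositional data "$f(x) \in N_1(*_Y)$" can be constructed from "$x \in N_1(*_X)$" uniformly in $x$. Since $x \in N_1(*_X)$ is a proposition (a truncated existence statement) and the target $f(x) \in N_1(*_Y)$ is also a proposition, it suffices to give the construction assuming an actual witness $(I, p : I^2 = 0, q : I = 0 \to x = *_X)$, which is precisely what the argument above does with $J := I$. So there is no coherence obstacle, and the main (mild) obstacle is simply being careful that "first order disk" is stable under the construction — which it is, because the witnessing ideal is transported unchanged along $f$.

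\begin{proof}
Let $f : (X,*) \to (Y,*)$ be a pointed map between pointed schemes. We claim $f$ restricts to a map $N_1(*) \to N_1(*)$. Since membership in $N_1(*)$ is a proposition, it suffices, given $x : X$ with a witness that $x \in N_1(*)$, to produce a witness that $f(x) \in N_1(*)$. So let $I \subseteq R$ be a finitely generated ideal with $I^2 = 0$ and $I = 0 \to x = *$. Then the same ideal $I$ witnesses $f(x) \in N_1(*)$: if $I = 0$ then $x = *$, hence $f(x) = f(*) = *$ because $f$ is pointed. This gives the desired restriction $N_1(f) : N_1(*) \to N_1(*)$, which is pointed since $f(*) = *$.

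By \Cref{first-order-schemes}, $N_1(*)$ is an affine scheme, and it is a first order disk: every $y : N_1(*)$ satisfies $y \in N_1(*)$ by definition of the subtype. Thus $N_1(f)$ is a map of pointed schemes whose domain and codomain are first order disks.

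Finally, $N_1$ preserves identities and composition: $N_1(\id_X)$ is the restriction of $\id_X$ to the subtype $N_1(*) \subseteq X$, which is $\id_{N_1(*)}$, and for composable pointed maps $f, g$ the map $N_1(g \circ f)$ is the restriction of $g \circ f$, which equals $N_1(g) \circ N_1(f)$ since restriction of functions to a subtype is strictly compatible with composition. Hence $N_1$ is a functor from pointed schemes to first order disks.
\end{proof}
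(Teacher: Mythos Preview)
Your proof is correct, and in fact more direct than the paper's. The paper reduces to the affine case $X,Y\subseteq R^n$, invokes the concrete characterization of \Cref{first-order-square-zero} (that $y\in N_1(x)$ iff the ideal generated by the $x_i-y_i$ squares to zero), and then uses a first-order Taylor expansion $f(y)=f(x)+df_x(y-x)$ to show that the components of $f(x)-f(y)$ lie in the ideal generated by the components of $x-y$. You instead work straight from the abstract definition of $N_1$: the same witnessing ideal $I$ for $x\in N_1(*)$ already witnesses $f(x)\in N_1(*)$, simply because $f$ preserves equality and basepoints. This avoids the affine reduction and the differential entirely. What the paper's computation buys is an explicit description of the witnessing ideal for $f(x)$ in terms of $df_x$, which connects to the surrounding material on tangent spaces; your argument trades that explicitness for a cleaner, definition-level proof that works uniformly for arbitrary pointed maps of schemes.
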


\begin{proof}
It is clear that $N_1$ is functorial, as it is clear that $y\in N_1(x)$ implies $f(y)\in N_1(f(x))$ from the definition of $N_1$. Now we just need to check that for $X$ a scheme with $x:X$, we have that $(N_1(x),x)$ is a first order disk. Then $N_1(x)$ is a disk by \Cref{first-order-schemes}, and it is clear that the first order neighbourhood of $x$ in $N_1(x))$ is the whole type $N_1(x)$.
\end{proof}

\begin{lemma}\label{disk-are-infinitesimal}
A pointed scheme $(X,*)$ is a first order disk if and only if there exists a finitely presented module $M$ such that:
\[(X,*) = (\D(M),0)\]
\end{lemma}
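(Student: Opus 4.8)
The plan is to prove the two implications separately; the direction from $\D(M)$ to first order disk is a short computation, while the converse reconstructs the module $M$ from the functions on $X$.

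For the easy direction, assume $(X,*) = (\D(M),0)$ with $M$ finitely presented, generated by $m_1,\dots,m_k$. A point $x : \D(M)$ is an $R$-algebra map $R\oplus M\to R$ splitting the first projection, hence of the form $(r,m)\mapsto r+\psi(m)$ for an $R$-linear map $\psi : M\to R$, and multiplicativity applied to $(0,m)(0,m') = (0,0)$ forces $\psi(m)\psi(m') = 0$ for all $m,m'$. Setting $I \colonequiv (\psi(m_1),\dots,\psi(m_k))$ gives a finitely generated ideal with $I^2 = 0$, and $I = 0$ implies $\psi$ vanishes on the generators, hence $\psi = 0$ and $x = 0$. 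So every $x : \D(M)$ lies in $N_1(0)$, i.e.\ $(\D(M),0)$ is a first order disk.

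For the converse, let $(X,*)$ be a first order disk. By definition the inclusion $N_1(*)\subseteq X$ is an equivalence, so $X$ is affine by \Cref{first-order-schemes}; write $X = \Spec A$ with $A$ a finitely presented $R$-algebra and present $A = R[X_1,\dots,X_n]/J$ with $J$ a finitely generated ideal, arranged by homogeneity of $R^n$ so that $*$ is the point $X_i\mapsto 0$; in particular $J\subseteq(X_1,\dots,X_n)$. Then, as in the proof of \Cref{first-order-schemes} via \Cref{first-order-square-zero}, the first order disk hypothesis says exactly that $y_iy_j = 0$ for all $y : \Spec A$ and all $i,j$; by \Cref{duality-axiom} this is equivalent to $X_iX_j = 0$ in $A$, i.e.\ $(X_iX_j)_{i,j}\subseteq J$.

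Finally I would exhibit $A$ as a square zero extension by a finitely presented module. Since $(X_iX_j)_{i,j}\subseteq J\subseteq(X_1,\dots,X_n)$, the image $L$ of $J$ in $R[X_1,\dots,X_n]/(X_iX_j)_{i,j} = R\oplus R^n$ is an ideal contained in the square zero summand $R^n$, hence an $R$-submodule of $R^n$, and it is finitely generated since $J$ is (it is spanned by the linear parts of the generators of $J$). Hence
\[ A = (R\oplus R^n)/L = R\oplus(R^n/L), \]
again a square zero extension, and $M \colonequiv R^n/L$, being the cokernel of $L\hookrightarrow R^n$ with $L$ finitely generated, is finitely presented. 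Under this identification the first projection of $A$ is the algebra map corresponding to $*$, so applying $\Spec$ and \Cref{duality-axiom} gives an equivalence of pointed schemes $(X,*) = (\D(M),0)$. The one step with genuine content is the appeal to \Cref{duality-axiom} turning the pointwise condition $y_iy_j = 0$ into the algebraic identity $X_iX_j\in J$, together with the small bookkeeping that a square zero extension modulo a submodule of its square zero part is again a square zero extension by a finitely presented module; the rest is routine.
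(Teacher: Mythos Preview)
Your proof is correct and follows essentially the same route as the paper's: both directions match the paper's argument almost step for step, with the only cosmetic difference that in the converse you pass through the polynomial presentation $R[X_1,\dots,X_n]/J$ and invoke the duality axiom explicitly to get $X_iX_j\in J$, whereas the paper phrases this as the inclusion $X\subseteq N_1(0)=\D(R^n)$ via \Cref{first-order-square-zero} and then takes $J$ directly as an ideal of $R\oplus R^n$. The algebraic content is identical. (One tiny phrasing quibble: an $R$-algebra map $R\oplus M\to R$ automatically restricts to the identity on $R$, so ``splitting the first projection'' is not quite the right description---but your conclusion that it has the form $(r,m)\mapsto r+\psi(m)$ is of course correct.)
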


\begin{proof}
First we check that for all $M$ finitely presented and $y:\D(M)$ we have that $y\in N_1(0)$. Let $m_1,\cdots, m_k$ be generators of $M$, then consider $d:M\to R$ induced by $y$, then $y=0$ if and only if $d=0$ and for all $i,j$ we have that:
\[d(m_i)d(m_j) = 0\]
This means that $I = (d(m_1),\cdots,d(m_k))$ has square $0$ and $I=0$ implies $y=0$ so that $y\in N_1(0)$.

For the converse we assume $X$ a first order disk, by \Cref{first-order-schemes} we have that $X$ is affine and pointed, up to translation we can assume $X$ is a closed subset $X\subset R^n$ pointed by $0$. Since $X$ is a first order disk we have that $X\subset N_1(0)$ and by \Cref{first-order-square-zero} we have $N_1(0) = \D(R^n)$.

This means there is a finitely generated ideal $J$ in $R\oplus R^n$ such that $X=\Spec(R\oplus R^n / J)$.
But $0$ corresponds to the first projection from $R\oplus R^n$, so that $0\in X$ means that if $(x,y)\in J$ then $x=0$, so that $J$ corresponds uniquely to a finitely generated sub-module $K$ of $R^n$ and:
\[X = \Spec(R\oplus (R^n/K)) = \D(R^n/K)\] 
\end{proof}

Now we want to study the duality between finitely presented and finitely copresented modules. While it is clear that the dual of a finite presentation yields a finite copresentation, the reverse is not true in general, but we will show  in \Cref{dual-of-fcop-fp} that it is a consequence of the duality axiom. First we need the following two extension results.

\begin{lemma}
  \label{extend-from-kernel}
  Let $M\subseteq R^n$ be the kernel of a linear map between finite free $R$-modules.
  Then any linear map $M\to R$ can be extended to $R^n$.
\end{lemma}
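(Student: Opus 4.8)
The plan is to realize $M$ as the kernel of a linear map $\varphi : R^n \to R^k$ and to extend a given linear functional $\lambda : M \to R$ by first extending it along the inclusion into a suitable quotient or to all of $R^n$ using the defining exact sequence. Concretely, write $M = \ker(\varphi)$ with $\varphi : R^n \to R^k$, so we have a short exact sequence $0 \to M \to R^n \xrightarrow{\varphi} \mathrm{im}(\varphi) \to 0$, where $\mathrm{im}(\varphi) \subseteq R^k$. The image is a finitely generated submodule of a finite free module. The key point will be that $R^n$ is a \emph{projective} (indeed free) module, so that the functor $\Hom_R(-, R)$ behaves well, but the obstruction to extending $\lambda$ from $M$ to $R^n$ is governed by $\mathrm{Ext}^1_R(\mathrm{im}(\varphi), R)$ — which need not vanish in general. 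So instead I would try a more hands-on approach using duality.

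First I would dualize the inclusion $M \hookrightarrow R^n$. Applying $\Hom_R(-, R)$ to $R^n \twoheadrightarrow R^n/M$ and to $M \hookrightarrow R^n$, it suffices to show that the restriction map $(R^n)^\star \to M^\star$ is surjective. Using the duality axiom, I would identify $R^n$ with the global functions on $\A^n = R^n$ (viewed as a scheme) and $M$ with the global functions on the affine scheme $\D(M^\star)$ or, more directly, use that $M \subseteq R^n$ as the kernel of $\varphi$ dualizes — since $M$ is finitely copresented — to a cokernel presentation of $M^\star$. That is, from $0 \to M \to R^n \xrightarrow{\varphi} R^k$ we get, after applying $\Hom_R(-,R)$, the complex $R^k \xrightarrow{\varphi^\star} R^n \xrightarrow{\iota^\star} M^\star \to 0$, and the content of the lemma is precisely that $\iota^\star$ is surjective, i.e.\ that this last complex is exact at $M^\star$.

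The cleanest route I would take: present $\varphi : R^n \to R^k$ and observe that $M = \ker\varphi$ fits into $0 \to M \to R^n \to R^n/M \to 0$ with $R^n/M \subseteq R^k$ finitely generated. Now a linear map $M \to R$ is the same as a partial map on $R^n$ defined on the submodule $M$; I would extend it by choosing, via duality, a finitely presented algebra whose spectrum witnesses the situation and invoking that $R$ is \emph{injective relative to closed immersions of affine schemes} in the relevant sense — concretely, that the restriction of global functions along a closed immersion $\Spec B \hookrightarrow \Spec A$ is surjective, which follows from the duality axiom since $R^{\Spec B} = B$ and $R^{\Spec A} = A$ and $B$ is a quotient of $A$. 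Applying this with $A$ the square-zero extension $R \oplus R^n$ (so $\Spec A = \D(n) = N_1(0) \subseteq R^n$... ) — actually more simply with $A = R[X_1,\dots,X_n]$ so $\Spec A = R^n$, and $B$ the coordinate ring of the closed subscheme $M \subseteq R^n$ — gives that every function on $M$, in particular every linear one, extends to a function on $R^n$; then I would linearize the extension by averaging against the scaling action of $R^\times$, or rather by taking the degree-one homogeneous part, which is legitimate since every function $R^n \to R$ is a polynomial by duality.

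The main obstacle I anticipate is the linearization step: extending $\lambda$ to \emph{some} function $\tilde\lambda : R^n \to R$ is straightforward from duality and surjectivity of the restriction of global sections along a closed immersion, but I must then produce a \emph{linear} extension. The fix is that, since $\tilde\lambda$ is a polynomial (by the duality axiom's consequence that all functions $R^n \to R$ are polynomial), its homogeneous degree-one part $\tilde\lambda_1$ is again a polynomial function, is $R$-linear, and still restricts to $\lambda$ on $M$ — because $\lambda$ is linear, so $\lambda = \lambda_1$ agrees with the degree-one part of any extension once one checks that extracting the degree-one part commutes with restriction to the submodule $M$. Verifying this compatibility carefully — that the graded pieces of an extension restrict to the graded pieces of $\lambda$ on the closed subscheme $M$ — is the one genuinely delicate point, and I would handle it by working with the $R^\times$-action: $\lambda(r m) = r\lambda(m)$ forces $\tilde\lambda(rm) - r\tilde\lambda(m)$ to vanish on $M$ for all $r$, and integrating this (formally, comparing Taylor coefficients in $r$) isolates the linear part as the desired extension.
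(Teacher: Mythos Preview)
Your proposal, after the exploratory detours through $\mathrm{Ext}$ and square-zero extensions, lands on precisely the paper's argument: realize $M$ as a closed subscheme of $R^n$, use duality to extend $\lambda$ to a polynomial function on $R^n$, then take its degree-one homogeneous part and verify via the scaling relation $\lambda(rx)=r\lambda(x)$ (comparing coefficients in $r$) that this linear part still restricts to $\lambda$ on $M$. This is exactly what the paper does, with the same ``delicate point'' resolved the same way.
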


\begin{proof}
  First note that $M$ is affine of the form $\Spec(R[X_1,\dots,X_n]/(l_1,\dots,l_m))$ with $l_i$ linear.
  Let $L:M\to R$ be linear. Let $P:R^n\to R$ be given by taking a preimage of $L$ under the quotient map $R[X_1,\dots,X_n]\to R[X_1,\dots,X_n]/(l_1,\dots,l_m)$.
  By construction, we have $P_{\vert M}=L$.
  Let $P=\sum_{\sigma:\N^{\{1,\dots,n\}}}a_\sigma X_1^{\sigma(1)}\cdots X_n^{\sigma(n)}$.
  Now we can conclude by showing that the linear part of $P$
  \[
    K\colonequiv \sum_{\sigma:\N^{\{1,\dots,n\}}, \sum \sigma =1}a_\sigma X_1^{\sigma(1)}\cdots X_n^{\sigma(n)}
  \]
  extends $L$ as well, i.e.\ we will see $K_{\vert M}=L$.
  
  For all $x:M$ and $\lambda : R$ we have $L(\lambda x)=\lambda L(x)$ and therefore
  \[
    \sum_{\sigma:\N^{\{1,\dots,n\}}}\lambda^{\sum\sigma} a_\sigma x_1^{\sigma(1)}\cdots x_n^{\sigma(n)}=\lambda \sum_{\sigma:\N^{\{1,\dots,n\}}} a_\sigma x_1^{\sigma(1)}\cdots x_n^{\sigma(n)}
  \]
  By comparing coefficients as polynomials in $\lambda$, we have $\sum_{\sigma:\N^{\{1,\dots,n\}}, \sum \sigma \neq 1}a_\sigma x_1^{\sigma(1)}\cdots x_n^{\sigma(n)}=0$,
  which shows $K_{\vert M}=P_{\vert M}=L$.
\end{proof}

\begin{lemma}
  \label{extend-from-image}
  Let $\varphi:R^n\to R^m$ be $R$-linear, then any linear map $\mathrm{im}(\varphi)\to R$ on the image of $\varphi$ can be extended to $R^m$.
\end{lemma}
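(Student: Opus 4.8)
The plan is to turn the problem, via the first isomorphism theorem, into a question about linear forms vanishing on a linear subscheme, and then invoke the duality axiom exactly as in the proof of \Cref{extend-from-kernel}.

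First I would take an $R$-linear map $L:\mathrm{im}(\varphi)\to R$ and precompose it with the corestriction $\bar\varphi:R^n\twoheadrightarrow\mathrm{im}(\varphi)$, obtaining an $R$-linear functional $g\colonequiv L\circ\bar\varphi:R^n\to R$. Since $R^n$ is free, $g$ is represented by a vector $v:R^n$, and since $\bar\varphi$ has kernel $\ker(\varphi)$, the functional $g$ vanishes on $\ker(\varphi)$. Using the standard identification of $(R^k)^\star$ with $R^k$, write $\varphi^\star:R^m\to R^n$ for the transpose of $\varphi$. It then suffices to find $w:R^m$ with $\varphi^\star(w)=v$: for any $x:R^n$ this yields $L(\varphi(x))=g(x)=w\cdot\varphi(x)$, and since every element of $\mathrm{im}(\varphi)$ is merely of the form $\varphi(x)$ and the resulting equation is a proposition, the linear map $y\mapsto w\cdot y$ on $R^m$ extends $L$.

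The crux is showing $v\in\mathrm{im}(\varphi^\star)$, equivalently that $v$ is an $R$-linear combination of the rows $l_1,\dots,l_m$ of $\varphi$. I would write $\ker(\varphi)=\Spec\bigl(R[X_1,\dots,X_n]/(l_1,\dots,l_m)\bigr)$ and regard $g=\sum_j v_j X_j$ as a homogeneous linear polynomial; it vanishes on this affine scheme. By the duality axiom the map $R[X_1,\dots,X_n]/(l_1,\dots,l_m)\to R^{\ker(\varphi)}$ is an isomorphism, in particular injective, so the polynomials vanishing on $\ker(\varphi)$ are precisely the elements of the ideal $(l_1,\dots,l_m)$; hence $g=\sum_j h_j l_j$ for some $h_j:R[X_1,\dots,X_n]$. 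Comparing the degree-one homogeneous parts of both sides — the $l_j$ being already homogeneous of degree one — gives $g=\sum_j h_j(0)\,l_j$, so $v=\varphi^\star(w)$ with $w\colonequiv(h_1(0),\dots,h_m(0))$, which completes the proof.

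I expect the only delicate points to be notational bookkeeping: identifying $\mathrm{im}(\varphi)$ with $R^n/\ker(\varphi)$ and keeping the transpose straight. There is no way to avoid the duality axiom here, since the statement is genuinely false over, say, $\Z$ — for $\varphi$ multiplication by $2$ on $\Z$ one has $\mathrm{im}(\varphi)=2\Z$ and the functional $2k\mapsto k$ does not extend to $\Z$ — which is the same phenomenon discussed above for finitely copresented modules. Since the argument reuses the linear-part trick of \Cref{extend-from-kernel}, I would present both lemmas with a common underlying computation if space permits.
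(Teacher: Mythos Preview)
Your proof is correct and essentially identical to the paper's: both precompose $L$ with $\varphi$ to obtain a linear functional on $R^n$ vanishing on $\ker(\varphi)$, invoke duality to get ideal membership in the ideal generated by the rows of $\varphi$, and then compare linear parts to extract scalar coefficients giving the desired extension. The paper phrases the reduction in terms of the implication $\sum_j x_j(a_{\_j})=0\Rightarrow\sum_j x_jl_j=0$ and the ideal inclusion $(Q)\subseteq(P_1,\dots,P_m)$, whereas you package it via $\bar\varphi$, the transpose $\varphi^\star$, and the first isomorphism theorem, but the argument is the same.
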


\begin{proof}\footnote{This proof is due to Thierry Coquand.}
  Let $(a_{i,j})$ be the coefficients of the matrix representing  $\varphi$ with respect to the standard basis, and let us denote the column $(a_{i,j})_{1\leq i\leq m}$ by $A_j$.
  Then the image of $\varphi$ is generated by these columns:
  \[
    \mathrm{im}(\varphi)=\left\{\Sigma_{j=1}^n x_jA_j\mid \forall_j.\, x_j:R\right\}
  \]
  Let $L:\mathrm{im}(\varphi)\to R$ be $R$-linear and $l_j\colonequiv L(A_j)$.
  Applying $L$ to a general element of $\mathrm{im}(\varphi)$ and using linearity yields the following implication:
  \[
    \sum_{j=1}^n x_jA_j = 0 \Rightarrow \sum_{j=1}^n x_jl_j = 0
  \]
  The left side being 0 means that $m$ linear polynomials $P_i(x_1,\dots,x_n)=\sum_{j=1}^n x_ja_{ij} $ vanish simultaneously.
  Let $Q(x_1,\dots,x_n)$ be the linear polynomial on the right side of the implication.
  Then by duality  the implication induces an inclusion of ideals $(Q)\subseteq (P_1,\dots,P_m)$ in $R[X_1,\dots,X_n]$.
  So there $b_i:R[X_1,\dots,X_n]$ such that
  \[
     Q = \sum_{i=1}^m b_iP_i
   \]
   By comparing coefficients it is clear that the $b_i$ can be chosen to be in $R$, which we now assume.
   
   We define a $R$-linear map $K:R^m\to R$ by $(y_1,\dots,y_m)\mapsto\sum_{i=1}^m b_iy_i$.
   $K$ extends $L$:
   \begin{eqnarray*}
     K\left(\sum_{j=1}^n x_jA_j\right) &=&\sum_{i=1}^m b_i\sum_{j=1}^n x_ja_{ij} \\
     &=& \sum_{i=1}^m b_iP_i(x_1,\dots,x_n) \\
     &=& Q(x_1,\dots,x_n) \\
     &=& \sum_{j=1}^n x_jl_j \\
     &=& L\left(\sum_{j=1}^n x_jA_j\right)
   \end{eqnarray*}
\end{proof}

\begin{lemma}
  \label{dual-of-fcop-fp}
  Let $M$ be finitely copresented, i.e.\ let there be an exact sequence
  \begin{center}
    \begin{tikzcd}
      0\ar[r] & M\ar[r,hook, "\varphi"] & R^n\ar[r,"P"] & R^m
    \end{tikzcd}
  \end{center}
  Then the dual of this sequence is exact as well.
  In particular, $M^\star$ is finitely presented.
\end{lemma}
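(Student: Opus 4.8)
The plan is to dualize the given exact sequence $M \overset{\varphi}{\hookrightarrow} R^n \overset{P}{\to} R^m$ and check exactness at each spot, using the two extension lemmas just proved. The dual sequence is $R^m \overset{P^\star}{\to} R^n \overset{\varphi^\star}{\to} M^\star$. Since $R^n$ and $R^m$ are finite free, $(R^n)^\star \cong R^n$ and $(R^m)^\star \cong R^m$ canonically, so this is a sequence of the expected shape, and surjectivity of $\varphi^\star$ together with exactness at $R^n$ is exactly what exhibits $M^\star$ as a cokernel of a map between finite free modules, i.e.\ finitely presented. So the whole statement reduces to two claims: (1) $\varphi^\star : R^n \to M^\star$ is surjective, and (2) $\ker(\varphi^\star) = \mathrm{im}(P^\star)$.

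For (1), surjectivity of $\varphi^\star$ says precisely that every $R$-linear map $M \to R$ extends along $\varphi$ to a map $R^n \to R$. Since $M$ is (isomorphic to) the kernel of $P : R^n \to R^m$, this is \Cref{extend-from-kernel} applied verbatim. For exactness at $R^n$ in (2): the inclusion $\mathrm{im}(P^\star) \subseteq \ker(\varphi^\star)$ is the trivial direction, since $\varphi^\star \circ P^\star = (P \circ \varphi)^\star = 0$ because $P \circ \varphi = 0$ by exactness of the original sequence. For the reverse inclusion, take $\ell : R^n \to R$ with $\ell \circ \varphi = 0$, i.e.\ $\ell$ vanishes on $M = \ker P$. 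I want to produce $k : R^m \to R$ with $k \circ P = \ell$. Now $\ell$ vanishing on $\ker P$ means $\ell$ factors (as a map of sets, hence as a linear map) through $\mathrm{im}(P) \subseteq R^m$: concretely, define $\bar\ell : \mathrm{im}(P) \to R$ by $\bar\ell(P(x)) \colonequiv \ell(x)$, which is well-defined precisely because $P(x) = P(x') \Rightarrow x - x' \in \ker P \Rightarrow \ell(x) = \ell(x')$. Then $\bar\ell$ is $R$-linear on $\mathrm{im}(P)$, and by \Cref{extend-from-image} it extends to some $k : R^m \to R$; this $k$ satisfies $k \circ P = \ell$ by construction, so $\ell = P^\star(k) \in \mathrm{im}(P^\star)$, as desired.

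The step I expect to be the main (if mild) obstacle is the well-definedness of $\bar\ell$ on $\mathrm{im}(P)$: it requires knowing that $\ell$ vanishing on the kernel of $P$ lets us define a function on the image, which uses that $\mathrm{im}(P) \simeq R^n / \ker(P)$ as $R$-modules. This is unproblematic classically but deserves a word in a constructive/synthetic setting — it follows because the quotient map $R^n \to \mathrm{im}(P)$ is a surjection whose fibers are cosets of $\ker P$, and $\ell$ being constant on each such coset gives the factorization by the universal property of the set-quotient, with linearity inherited. Once $\bar\ell$ is in hand, everything else is a direct application of the two preceding lemmas plus the triviality $P\circ\varphi = 0$, and the final sentence about finite presentation of $M^\star$ is immediate from reading off $M^\star = \mathrm{coker}(P^\star : R^m \to R^n)$.
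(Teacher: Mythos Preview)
Your proof is correct and follows exactly the paper's approach: surjectivity of $\varphi^\star$ from \Cref{extend-from-kernel}, and exactness at $R^n$ by factoring a map vanishing on $M=\ker P$ through $\mathrm{im}(P)$ and then applying \Cref{extend-from-image}. You have simply spelled out in detail (including the well-definedness of $\bar\ell$) what the paper compresses into two sentences.
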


\begin{proof}
  Surjectivity of $\varphi^\star$ follows from \Cref{extend-from-kernel}.
  Linear maps $R^n\to R$ which vanish on $M$ factor through the image of $P$, so exactness at the middle of the dual sequence follows from \Cref{extend-from-image}.
\end{proof}

\begin{corollary}\label{double-dual-identity}
For any module $M$ finitely presented or finitely copresented, we have that $M^{\star\star}=M$.
\end{corollary}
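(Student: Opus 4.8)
The plan is to show that for such $M$ the canonical evaluation map
\[ \eta_M : M \longrightarrow M^{\star\star},\qquad \eta_M(m)(\varphi)\colonequiv\varphi(m), \]
is an isomorphism. This map is natural in $M$ and is visibly an isomorphism when $M$ is finite free (via the standard dual basis). I will also use that $\Hom_{R}(-,R)$ is left exact over any ring: a right exact sequence $R^a\xrightarrow{f}R^b\to\mathrm{coker}(f)\to 0$ dualizes to a left exact one, i.e.\ $\mathrm{coker}(f)^\star=\mathrm{Ker}(f^\star)$. Given these two formal facts, the only genuinely non-formal input needed is \Cref{dual-of-fcop-fp}, which tells us that dualizing a finite (co)presentation twice again produces an exact sequence.

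\emph{The finitely presented case.} Write $M=\mathrm{coker}(p\colon R^a\to R^b)$. Left exactness of $\Hom_{R}(-,R)$ gives $M^\star=\mathrm{Ker}(p^\star\colon R^{b\star}\to R^{a\star})$, so $M^\star$ is finitely copresented. Applying \Cref{dual-of-fcop-fp} to this copresentation, the twice-dualized sequence $R^{a\star\star}\xrightarrow{p^{\star\star}}R^{b\star\star}\to M^{\star\star}\to 0$ is again exact, i.e.\ $M^{\star\star}=\mathrm{coker}(p^{\star\star})$, and moreover (chasing the identifications) the projection $R^{b\star\star}\to M^{\star\star}$ is $\pi^{\star\star}$, where $\pi\colon R^b\to M$ is the original cokernel map. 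Naturality of $\eta$ for $p$ and for $\pi$ then yields a commuting ladder from $R^a\xrightarrow{p}R^b\xrightarrow{\pi} M\to 0$ to $R^{a\star\star}\xrightarrow{p^{\star\star}}R^{b\star\star}\xrightarrow{\pi^{\star\star}}M^{\star\star}\to 0$ with vertical maps $\eta_{R^a},\eta_{R^b},\eta_M$. Since the first two are isomorphisms, the map induced on cokernels is an isomorphism, and by the naturality square for $\pi$ that induced map is exactly $\eta_M$.

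\emph{The finitely copresented case.} Write $M=\mathrm{Ker}(P\colon R^a\to R^b)$, so that $M\hookrightarrow R^a\xrightarrow{P}R^b$ is exact. By \Cref{dual-of-fcop-fp} the dual sequence $R^{b\star}\xrightarrow{P^\star}R^{a\star}\to M^\star\to 0$ is exact, so $M^\star=\mathrm{coker}(P^\star)$ is finitely presented. Dualizing once more and using left exactness of $\Hom_{R}(-,R)$ gives $M^{\star\star}=\mathrm{Ker}(P^{\star\star}\colon R^{a\star\star}\to R^{b\star\star})$. As before, naturality of $\eta$ produces a commuting ladder from $0\to M\to R^a\xrightarrow{P}R^b$ to $0\to M^{\star\star}\to R^{a\star\star}\xrightarrow{P^{\star\star}}R^{b\star\star}$; the two right-hand verticals $\eta_{R^a},\eta_{R^b}$ are isomorphisms, so $\eta_{R^a}$ restricts to an isomorphism of kernels, which is $\eta_M$.

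The main obstacle is not in this corollary at all: all the real work is in \Cref{dual-of-fcop-fp} and, through it, in \Cref{extend-from-kernel,extend-from-image}, where the duality axiom is used to show that the twice-dualized sequence stays exact. What remains here is purely formal — left exactness of $\Hom_{R}(-,R)$, and the observation that a map of right exact (resp.\ left exact) sequences which is an isomorphism in the two free slots induces an isomorphism on cokernels (resp.\ kernels). The one point that deserves a little care is verifying that this induced isomorphism is the canonical map $\eta_M$ and not merely some isomorphism $M\xrightarrow{\sim}M^{\star\star}$; this is the short diagram chase, settled by applying naturality of $\eta$ to the cokernel projection $R^b\to M$ (resp.\ the kernel inclusion $M\hookrightarrow R^a$).
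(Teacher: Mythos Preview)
Your argument is correct and is precisely the expected elaboration: the paper states this as a bare corollary of \Cref{dual-of-fcop-fp} with no proof, and what you have written is exactly the standard naturality-of-$\eta$/five-lemma unpacking of why it follows. The only nontrivial input is indeed \Cref{dual-of-fcop-fp}, and your care in identifying the induced map with the canonical $\eta_M$ (via $\pi^{\star\star}$, resp.\ the dual of the kernel inclusion) is the one point worth spelling out.
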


\begin{lemma}\label{equivalence-modules-disks}
  The functor $M\mapsto \D(M^\star)$ from finitely copresented modules to first order disks is an equivalence with inverse $(X,x)\mapsto T_x(X)$.
\end{lemma}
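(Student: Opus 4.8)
The plan is to assemble the two stated constructions into a pair of mutually inverse equivalences using results already established. First I would check that both are well-defined and functorial. If $M$ is finitely copresented then $M^\star$ is finitely presented by \Cref{dual-of-fcop-fp}, so $\D(M^\star)$ is defined, and it is a first order disk by \Cref{disk-are-infinitesimal}; functoriality in $M$ follows from functoriality of $(-)^\star$ (using \Cref{dual-of-fcop-fp} together with its evident analogue that the dual of a finitely presented module is finitely copresented) and of $\D$. In the other direction, a first order disk $(X,x)$ is in particular an affine scheme by \Cref{first-order-schemes}, so $T_x(X)$ is finitely copresented by \Cref{tangent-finite-copresented}; functoriality is given by $f\mapsto df_x$, which is $R$-linear.

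The key point is a natural $R$-linear isomorphism
\[ T_0(\D(N))\;\simeq\; N^\star \]
for $N$ finitely presented: an element of $T_0(\D(N))$ is a pointed map $\D(1)\to_\pt\D(N)$, and these correspond to $R$-linear maps $N\to R$ by \Cref{equivalence-module-infinitesimal} (taking the second module there to be $R=R^1$), and I would check that this bijection is $R$-linear and natural in $N$. Substituting $N=M^\star$ and using $M^{\star\star}=M$ (\Cref{double-dual-identity}) yields a natural isomorphism $T_0(\D(M^\star))\simeq M$, so one of the two round-trip composites is the identity.

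For the converse composite, I would observe that, viewed contravariantly, $\D$ is an equivalence from finitely presented $R$-modules to first order disks — essentially surjective by \Cref{disk-are-infinitesimal}, full and faithful by \Cref{equivalence-module-infinitesimal} — and that $(-)^\star$ is an anti-equivalence between finitely presented and finitely copresented $R$-modules, with $(-)^{\star\star}\simeq\id$ on both sides by \Cref{double-dual-identity}. The isomorphism of the previous paragraph says precisely that $(X,x)\mapsto T_x(X)$ agrees, after precomposition with $\D$, with $(-)^\star$; cancelling the equivalence $\D$ then shows that $(X,x)\mapsto T_x(X)$ is an equivalence with inverse $M\mapsto\D(M^\star)$. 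Alternatively, one can argue pointwise: given a first order disk $(X,x)$, write $(X,x)\simeq(\D(N),0)$ with $N$ finitely presented by \Cref{disk-are-infinitesimal}, so that $T_x(X)\simeq N^\star$ by the above and hence $\D(T_x(X)^\star)\simeq\D(N^{\star\star})=\D(N)\simeq(X,x)$.

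The only real work lies in the bookkeeping: verifying that the correspondence of \Cref{equivalence-module-infinitesimal} is $R$-linear and natural, so that $T_0(\D(N))\simeq N^\star$ is an isomorphism of modules and not merely of underlying types, and that the various identifications are coherent enough to glue into natural isomorphisms of functors. I do not expect any of this to be difficult, but it is where the care is needed, and it is why a somewhat more conceptual packaging (equivalence plus cancellation) is preferable to chasing both composites by hand.
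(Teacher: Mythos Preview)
Your approach is correct and essentially the same as the paper's: both use \Cref{equivalence-module-infinitesimal} for full faithfulness and the identification $T_0(\D(N))\simeq N^\star$, \Cref{disk-are-infinitesimal} for essential surjectivity, and \Cref{double-dual-identity} to close the loop. The paper is terser---it argues fully faithful plus essentially surjective and then checks only the one round-trip $T_0(\D(M^\star))=M^{\star\star}=M$---while you spell out both composites and flag the linearity/naturality bookkeeping explicitly, but the substance is identical.
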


\begin{proof}
It is fully faithful by \Cref{equivalence-module-infinitesimal} and essentially surjective by \Cref{disk-are-infinitesimal}. To check for the inverse it is enough to check that $T_0(\D(M^\star)) = M$. But by \Cref{equivalence-module-infinitesimal} we have that: 
\[T_0(\D(M^\star)) = \left(\D(1)\to_\pt \D(M^{\star})\right) = M^{\star\star}\] 
and we conclude by \Cref{double-dual-identity}.
\end{proof}

\begin{lemma}\label{duality-infinitesimal-tangent}
Let $X$ be a scheme with $x:X$, then we have that $N_1(x) = \D(T_x(X)^\star)$.
\end{lemma}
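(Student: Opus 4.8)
The plan is to identify $N_1(x)$ itself as a first order disk, compute its tangent space at $x$, and then apply the equivalence between finitely copresented modules and first order disks of \Cref{equivalence-modules-disks}.

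First I would check that the pointed scheme $(N_1(x),x)$ is a first order disk. By \Cref{first-order-schemes} it is an affine scheme, pointed by $x$. Given $y:N_1(x)$, a finitely generated square-zero ideal $I\subseteq R$ with $I=0\to x=y$ exists by the definition of $N_1$ inside $X$; since $N_1(x)$ is a subset of $X$, the same $I$ witnesses $y\in N_1(x)$ computed inside $N_1(x)$. Hence $(N_1(x),x)$ is a first order disk, and \Cref{equivalence-modules-disks} yields a pointed identification $(N_1(x),x)=\D\big(T_x(N_1(x))^\star\big)$ with $T_x(N_1(x))$ finitely copresented.

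Next I would show $T_x(N_1(x))=T_x(X)$ as $R$-modules. A tangent vector at $x$ in $X$ is a pointed map $t:(\D(1),0)\to(X,x)$. Since $\D(1)$ is a first order disk (the first part of the proof of \Cref{disk-are-infinitesimal}), we have $N_1(0)=\D(1)$, so by functoriality of $N_1$ on pointed schemes (\Cref{N1-functor}) the map $t$ carries $\D(1)=N_1(0)$ into $N_1(x)$. As $N_1(x)\hookrightarrow X$ is an embedding, $t$ factors uniquely through it, and the resulting bijection $T_x(X)\simeq T_x(N_1(x))$ is $R$-linear since it is induced by postcomposition with the inclusion. Combining, $N_1(x)=\D\big(T_x(N_1(x))^\star\big)=\D\big(T_x(X)^\star\big)$.

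I do not expect a real obstacle; the only points requiring care are the two bookkeeping checks — that $N_1$ relative to the subspace $N_1(x)$ agrees with $N_1$ relative to $X$, and that every tangent vector of $X$ at $x$ factors through $N_1(x)$ — and both are immediate once one unwinds the definition of $N_1$ and reuses the computation already carried out in the proof of \Cref{N1-functor}.
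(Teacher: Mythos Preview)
Your proof is correct and follows essentially the same approach as the paper: identify $(N_1(x),x)$ as a first order disk via \Cref{first-order-schemes}, invoke the equivalence of \Cref{equivalence-modules-disks}, and reduce to showing $T_x(N_1(x))=T_x(X)$ by factoring tangent vectors through $N_1(x)$. You are simply more explicit in places where the paper is terse (e.g.\ verifying the first-order-disk condition and citing \Cref{N1-functor} for the factoring step).
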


\begin{proof}
By \Cref{N1-functor} we have that $(N_1(x),x)$ is a first order disk. By \Cref{equivalence-modules-disks} it is enough to check that $T_x(N_1(x)) = T_0(\D(T_x(X)^\star))$. 

It is immediate that any map $f:\D(1)\to X$ uniquely factors through $N_1(f(0))$ so that $T_x(N_1(x)) = T_x(X)$, and we have that $T_0(\D(T_x(X)^\star)) = T_x(X)$ by \Cref{equivalence-modules-disks}.
\end{proof}

\subsection{Projectivity of finitely copresented modules}

Finitely copresented $R$-modules are projective objects in the category of finitely copresented $R$-modules, which means that all surjections between finitely copresented $R$-modules split.

\begin{lemma}\label{tangent-copresented-modules}
Let $M$ be a finitely copresented module, then we have that $T_0(M) = M$.
\end{lemma}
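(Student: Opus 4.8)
The plan is to compute the tangent space $T_0(M)$ directly using the general machinery already developed for tangent spaces of schemes. First I would note that a finitely copresented module $M$, being the kernel of a linear map $P:R^n\to R^m$, is in particular an affine scheme (it is the common zero locus of finitely many linear polynomials on $R^n$, as observed in the proof of \Cref{tangent-bundle-scheme}). So $T_0(M)$ makes sense and \Cref{kernel-is-tangent-of-fibers} applies: writing $M=\mathrm{fib}_P(0)$ with basepoint $0$, we get $T_0(M)=\mathrm{Ker}(dP_0 : T_0(R^n)\to T_0(R^m))$.

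Next I would identify $dP_0$ explicitly. By \Cref{An-dimension-n} we have canonical identifications $T_0(R^n)=R^n$ and $T_0(R^m)=R^m$, and under these identifications the differential of a linear map is the map itself: for $P$ linear, $P(\epsilon v)=\epsilon P(v)$ for $\epsilon:\D(1)$, so the induced map on tangent vectors is $v\mapsto P(v)$. Hence $dP_0=P$ under these identifications, and therefore
\[ T_0(M)=\mathrm{Ker}(dP_0)=\mathrm{Ker}(P)=M \]
as $R$-modules, since $M$ was defined as $\mathrm{Ker}(P)=\varphi(M)\subseteq R^n$. One should check this identification is compatible with the module structure on $T_0(M)$ defined earlier (scalar multiplication $v\mapsto(t\mapsto v(rt))$ and addition via $\psi_{v,w}$), but this is routine since the identification $T_0(R^n)=R^n$ is already an identification of modules and taking kernels is functorial.

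The main obstacle — really the only thing requiring care — is making sure the chosen basepoint and the copresentation are handled coherently: the statement says "$T_0(M)=M$", implicitly using that $0:M$ via $\varphi(0)=0$, and one must confirm that the composite of the canonical isomorphisms ($T_0(M)\cong\mathrm{Ker}(dP_0)$ from \Cref{kernel-is-tangent-of-fibers}, then $\mathrm{Ker}(dP_0)\cong\mathrm{Ker}(P)$ from the linearity computation, then $\mathrm{Ker}(P)=M$ by definition) is the identity on underlying elements, not merely an abstract isomorphism. Alternatively, and perhaps more cleanly, I would invoke \Cref{equivalence-modules-disks}: $N_1(0)=\D(M^\star)$ for the first order disk $M$ (by \Cref{disk-are-infinitesimal}, since every element of a finitely copresented module lies in $N_1(0)$ — this is the argument in \Cref{disk-are-infinitesimal}), so $T_0(M)=T_0(N_1(0))=T_0(\D(M^\star))=M^{\star\star}=M$, where the last step is \Cref{double-dual-identity} and the penultimate is the computation in the proof of \Cref{equivalence-modules-disks}. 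This second route sidesteps the differential computation entirely and reuses already-proved facts, so I would likely present it as the main argument.
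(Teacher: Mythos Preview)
Your first approach is correct and is exactly the proof in the paper: write $M$ as the kernel of a linear $P$, apply \Cref{kernel-is-tangent-of-fibers} and \Cref{An-dimension-n}, and observe $dP_0=P$. The paper in fact omits the verification of this last identity, whereas you supply the one-line argument.

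The alternative route you say you would prefer, however, has a genuine gap. You assert that every element of a finitely copresented module $M$ lies in $N_1(0)$, so that $M$ is itself a first order disk and \Cref{disk-are-infinitesimal} identifies it with some $\D(N)$. This is false: take $M=R^n$ (the kernel of the zero map), where $N_1(0)=\D(n)\subsetneq R^n$. What \Cref{disk-are-infinitesimal} establishes is that $\D(N)$ is a first order disk for finitely \emph{presented} $N$; it says nothing about an arbitrary finitely copresented $M$ being one. Invoking \Cref{duality-infinitesimal-tangent} instead does not rescue the argument either: it yields $N_1(0)=\D(T_0(M)^\star)$, and converting this into $\D(M^\star)$ is precisely the statement $T_0(M)=M$ you are trying to prove. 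So present your first argument as the proof, not the second.
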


\begin{proof}
We have that $M$ is the kernel of a linear map $P:R^m\to R^n$. By \Cref{kernel-is-tangent-of-fibers} we have that $T_0(M)$ is the kernel of:
\[dP_0:T_0(R^m)\to T_0(R^n)\]
but by \Cref{An-dimension-n} this is a map from $R^m$ to $R^n$, we omit the verification that $dP_0 = P$.
\end{proof}

\begin{lemma}\label{neighborhood-tangent-correspondence-smooth}
Assume given $M,N$ finitely copresented modules with a map $f:M\to N$. The following are equivalent:
\begin{enumerate}[(i)]
\item $f$ is surjective.
\item $f$ merely has a section.
\item The pointed map $\D(M^\star) \to \D(N^\star)$ corresponding to $f$ merely has a pointed section.
\end{enumerate}
\end{lemma}

\begin{proof}
By \Cref{equivalence-modules-disks} we know that (i) is equivalent to (ii). It is clear that (ii) implies (i).

Let us assume (i) and prove (iii). By \Cref{duality-infinitesimal-tangent} and \Cref{tangent-copresented-modules} we know that $\D(M^*)$ is the first order neighbourhood of $0$ in $M$, so that we have a commutative diagram:
\begin{center}
\begin{tikzcd}
\D(M^\star)\ar[r,hook]\ar[d] & M\ar[d,"f"]\\
\D(N^\star)\ar[r,hook,swap,"i"] & N\\
\end{tikzcd}
\end{center}
Since $\D(N^*)$ has choice and $f$ is surjective there is $g:\D(N^\star)\to M$ such that $f\circ g = i$. We know that $f(g(0))$, so by considering $g'=g-g(0)$ we have that $f\circ g' = i$ and $g'(0)$.
Then we can factor $g'$ through $\D(M^\star)$ as $N_1$ is functorial by \Cref{N1-functor}. This gives us a pointed section of the map $\D(M^\star) \to \D(N^\star)$.
\end{proof}

\begin{corollary}
Any finitely copresented module is projective in the category of finitely copresented modules.
\end{corollary}

There is work in progress \cite{wip-serre-duality} which shows that the finitely copresented modules are also projective in the abelian closure of the finite free modules and finitely presented modules turn out to be injective in this category.

\section{Unramified schemes}
\label{unramified-characterisation}
In this short section we present characterisations of unramified schemes and unramified maps between them. The situation is significantly simpler than with smoothness and étaleness.

\subsection{Unramified schemes}

\begin{lemma}\label{unramified-affine-characterisation}
Let $X$ be an affine scheme, the following are equivalent:
\begin{enumerate}[(i)]
\item $X$ is unramified.
\item Identity types in $X$ are decidable.
\item For all $x:X$, we have that $T_x(X)=0$.
\end{enumerate}
\end{lemma}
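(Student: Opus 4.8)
The plan is to establish the cycle $(i)\Rightarrow(ii)\Rightarrow(iii)\Rightarrow(i)$; note that $(ii)\Rightarrow(i)$ is also immediate on its own, since a decidable proposition is $\neg\neg$-stable, hence formally étale by \Cref{not-not-stable-prop-etale}, so that $X$ is formally unramified by the characterization of formally unramified types through their identity types. For $(i)\Rightarrow(ii)$ I would present the affine scheme $X$ as a closed subscheme $X\subseteq R^n$, so that for $x,y:X$ the proposition $x=y$ is the closed proposition $x_1-y_1=0\land\dots\land x_n-y_n=0$; if $X$ is formally unramified then each such $x=y$ is formally étale, hence a closed and formally étale proposition, hence decidable by \Cref{closed-and-etale-decidable}.

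For $(ii)\Rightarrow(iii)$ I would take $t:\D(1)\to X$ with $t(0)=x$ and an arbitrary $\epsilon:\D(1)$; since $\epsilon$ is nilpotent we have $\neg\neg(\epsilon=0)$, hence $\neg\neg(t(\epsilon)=x)$, and since identity types in $X$ are decidable (hence $\neg\neg$-stable) this gives $t(\epsilon)=x$. So every tangent vector at $x$ is the constant one, i.e.\ $T_x(X)=0$.

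For $(iii)\Rightarrow(i)$ the idea is to pass to first order neighbourhoods. By \Cref{tangent-finite-copresented} and \Cref{duality-infinitesimal-tangent} we have $N_1(x)=\D(T_x(X)^\star)$, which under the hypothesis $T_x(X)=0$ equals $\D(0)=\Spec(R)=1$; thus $N_1(x)$ is contractible, and as $x\in N_1(x)$ this forces $y=x$ for every $y\in N_1(x)$. Then, using the reformulation of formal unramifiedness as: $X\to X^{\epsilon=0}$ is an embedding for all $\epsilon$ with $\epsilon^2=0$, it is enough to show that $(\epsilon=0\to x=y)$ implies $x=y$ for all $x,y:X$. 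But given $\epsilon=0\to x=y$, the finitely generated ideal $I=(\epsilon)$ satisfies $I^2=(\epsilon^2)=0$ and $I=0\to x=y$, so $y\in N_1(x)$ by definition of $N_1$, and therefore $x=y$.

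The hard part will be $(iii)\Rightarrow(i)$: there is no obvious direct route from $T_x(X)=0$ to decidability of identity types — reading off the vanishing of the tangent space as above only yields $I^2=0\Rightarrow I=0$ for the relevant ideal $I$, which is not strong enough to run a Nakayama argument (for that one would want the idempotence $I=I^2$). The essential step is to first repackage $T_x(X)=0$ as the geometric fact that $N_1(x)$ is a single point, via \Cref{duality-infinitesimal-tangent}, and only then invoke the lifting characterization of formal unramifiedness.
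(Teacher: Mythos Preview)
Your proof is correct and follows essentially the same route as the paper's: the implications $(i)\Rightarrow(ii)$, $(ii)\Rightarrow(iii)$, and $(iii)\Rightarrow(i)$ are argued exactly as in the paper, with the same key ingredients (\Cref{closed-and-etale-decidable} for $(i)\Rightarrow(ii)$, $\neg\neg$-stability of decidable equality for $(ii)\Rightarrow(iii)$, and \Cref{duality-infinitesimal-tangent} to collapse $N_1(x)$ for $(iii)\Rightarrow(i)$). Your version is slightly more explicit in spelling out why identity types in an affine $X\subseteq R^n$ are closed propositions, which the paper leaves implicit in its one-line appeal to \Cref{closed-and-etale-decidable}.
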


\begin{proof}
(i) implies (ii): By \Cref{closed-and-etale-decidable}.

(ii) implies (i): Decidable propositions are formally étale.

(ii) implies (iii): Assume given $x:X$ with $t:T_x(X)$, then for all $\epsilon:\D(1)$ we have $\neg\neg(\epsilon = 0)$ so that we have $\neg\neg (t(\epsilon) = t(0))$ which implies $t(\epsilon) = t(0)$ since equality is assumed decidable. Therefore $t = 0$ in $T_x(X)$.

(iii) implies (i): Indeed given $\epsilon:R$ such that $\epsilon^2=0$, assume $x,y:X$ such that $\epsilon=0 \to x=y$. Then $x\in N_1(y)$ so that by \Cref{duality-infinitesimal-tangent} and $T_y(X)=0$ we conclude $x=y$.
\end{proof}

\begin{corollary}\label{unramified-scheme-characterisation}
Let $X$ be a scheme, the following are equivalent:
\begin{enumerate}[(i)]
\item $X$ is unramified.
\item Identity types in $X$ are open.
\item For all $x:X$, we have that $T_x(X)=0$.
\end{enumerate}
\end{corollary}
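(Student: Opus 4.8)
The plan is to reduce the scheme case to the already-established affine case (\Cref{unramified-affine-characterisation}) using a finite open cover, exactly as in the proof of \Cref{etale-zariski-local}. Concretely, fix a finite open cover $(U_i)_{i:I}$ of $X$ by affine schemes.

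First I would prove the equivalence of (i) and (ii). The implication (ii)$\Rightarrow$(i) follows because a type with open identity types is a subtype of a formally étale type pointwise, or more directly: open propositions are formally étale by \Cref{not-not-stable-prop-etale}, so identity types being open makes $X$ formally unramified by the characterization of formally unramified types as those whose identity types are formally étale. For (i)$\Rightarrow$(ii), given $x,y:X$, there merely exists $i$ with $x\in U_i$, and then $x=_X y$ is equivalent to $\sum_{y\in U_i} x=_{U_i} y$; the proposition $y\in U_i$ is open, and $x=_{U_i}y$ is decidable by \Cref{unramified-affine-characterisation} applied to the affine $U_i$ (which is formally unramified since $X$ is and formal unramifiedness passes to open subtypes), hence $x=_X y$ is open. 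Since $X$ is a set, being open is a well-defined condition on this proposition, so we are done.

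Next I would handle (ii)$\Leftrightarrow$(iii). For (iii)$\Rightarrow$(ii): fix $x,y:X$; merely pick $i$ with $x\in U_i$; then as above $x=_X y$ is equivalent to $(y\in U_i)\wedge(x=_{U_i}y)$, and by \Cref{unramified-affine-characterisation} the hypothesis $T_x(U_i)=0$ (which equals $T_x(X)=0$ since $U_i$ is open in $X$, so tangent vectors at $x$ automatically factor through $U_i$) gives that $x=_{U_i}y$ is decidable, hence $x=_X y$ is open. For (ii)$\Rightarrow$(iii): given $x:X$, choose $i$ with $x\in U_i$; since $T_x(X)=T_x(U_i)$ and identity types in $U_i$ are open hence in particular $\neg\neg$-stable, the argument of \Cref{unramified-affine-characterisation} (ii)$\Rightarrow$(iii) applies verbatim: any $t:T_x(X)$ satisfies $\neg\neg(t(\epsilon)=t(0))$ for $\epsilon:\D(1)$, whence $t(\epsilon)=t(0)$, so $t=0$.

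The main obstacle, such as it is, is the bookkeeping around replacing ``$X$ affine'' by ``$x\in U_i$ for some $U_i$ in the cover'' and checking that the relevant local data — the tangent space and the identity types restricted to $U_i$ — genuinely agree with the global ones; the key facts here are that $U_i\subseteq X$ is open so $T_x(U_i)=T_x(X)$, and that openness of a proposition is stable under the equivalence $x=_X y\simeq (y\in U_i)\wedge(x=_{U_i}y)$ together with conjunction of open propositions being open. None of this is deep, but it is where care is needed. An alternative, slicker route would be to invoke \Cref{etale-zariski-local} directly to pass between $X$ and the $U_i$ for statement (i), and only do the affine-to-local translation for (ii) and (iii); either way the proof is short.
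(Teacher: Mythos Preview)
Your proposal is correct and follows essentially the same approach as the paper: reduce to an affine cover and invoke \Cref{unramified-affine-characterisation}, using the identifications $T_x(X)=T_x(U_i)$ and $x=_X y \leftrightarrow \sum_{y\in U_i} x=_{U_i} y$. The only difference is organizational: the paper proves the three-cycle (ii)$\Rightarrow$(i)$\Rightarrow$(iii)$\Rightarrow$(ii), using \Cref{etale-zariski-local} up front to pass (i) between $X$ and the $U_i$, which is precisely the ``slicker route'' you describe at the end; your version instead proves (i)$\Leftrightarrow$(ii) and (ii)$\Leftrightarrow$(iii) with some mild redundancy, but the content is the same.
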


\begin{proof}
Assume $(U_i)_{i:I}$ a finite cover of $X$ by affine schemes. By \Cref{etale-zariski-local} we have that $X$ is formally unramified if and only if $U_i$ is formally unramified for all $i:I$.

(ii) implies (i). By \Cref{not-not-stable-prop-etale}.

(i) implies (iii). Indeed for all $x:X$ there exists $i:I$ such that $x\in U_i$, then $T_x(X) = T_x(U_i)$ and $T_x(U_i) = 0$ by \Cref{unramified-affine-characterisation}.

(iii) implies (ii). Assume $x,y:X$, then there exists $i:I$ such that $x\in U_i$ and:
\[x=_Xy \leftrightarrow \Sigma_{y\in U_i} x=_{U_i} y\]
By \Cref{unramified-affine-characterisation} we have that identity types in $U_i$ are decidable, so $x=_Xy$ is open.
\end{proof}

\subsection{Unramified morphisms between schemes}

Now we generalise this to maps between schemes.

\begin{proposition}\label{unramified-map-characterisation}
A map between schemes is unramified if and only if its differentials are injective. 
\end{proposition}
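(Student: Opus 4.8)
The plan is to reduce the statement about a map $f:X\to Y$ to the characterization of unramified \emph{schemes} already obtained in \Cref{unramified-scheme-characterisation}, applied fibrewise. By definition, $f$ is unramified if and only if each fiber $\mathrm{fib}_f(y)$ is formally unramified; and each fiber is a scheme because $X$ and $Y$ are schemes (fibers of maps between schemes are schemes). So $f$ is unramified iff for every $y:Y$ and every point $(x,p):\mathrm{fib}_f(y)$ the tangent space $T_{(x,p)}(\mathrm{fib}_f(y))$ vanishes, by \Cref{unramified-scheme-characterisation}. Since every point of a fiber of $f$ is of the form $(x,\refl_{f(x)})$ for some $x:X$ (up to the identification $y = f(x)$ carried by $p$), it suffices to check that $T_{(x,\refl_{f(x)})}(\mathrm{fib}_f(f(x))) = 0$ for all $x:X$.

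The key input is then \Cref{kernel-is-tangent-of-fibers}, which identifies
\[
  \mathrm{Ker}(df_x) = T_{(x,\refl_{f(x)})}(\mathrm{fib}_f(f(x))).
\]
Hence $T_{(x,\refl_{f(x)})}(\mathrm{fib}_f(f(x))) = 0$ for all $x:X$ is equivalent to $\mathrm{Ker}(df_x) = 0$ for all $x:X$, which, since $df_x$ is a map of $R$-modules, is exactly the statement that every differential $df_x$ is injective. Chaining these equivalences gives: $f$ is unramified $\iff$ all $df_x$ are injective.

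The only point requiring a little care — and the main (mild) obstacle — is the bookkeeping around base points of fibers: one must check that quantifying over all $y:Y$ and all points of $\mathrm{fib}_f(y)$ is the same as quantifying over all $x:X$ with the canonical base point $(x,\refl_{f(x)})$, using that for $(x,p):\mathrm{fib}_f(y)$ transport along $p$ gives a pointed equivalence $\mathrm{fib}_f(y)\simeq \mathrm{fib}_f(f(x))$ sending $(x,p)$ to $(x,\refl_{f(x)})$, and hence an isomorphism of tangent spaces. Everything else is a direct application of the cited lemmas, so the proof is short.
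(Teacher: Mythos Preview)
Your proof is correct and follows essentially the same approach as the paper: both reduce via \Cref{kernel-is-tangent-of-fibers} to the equivalence between injectivity of $df_x$ and vanishing of the fiberwise tangent spaces, and both handle the basepoint bookkeeping by path elimination on $p:f(x)=y$. You are simply more explicit than the paper in citing \Cref{unramified-scheme-characterisation} and in noting that fibers of maps between schemes are schemes, but the underlying argument is the same.
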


\begin{proof}
The map $df_x$ is injective if and only if its kernel is $0$. By \Cref{kernel-is-tangent-of-fibers}, this means that $df_x$ is injective for all $x:X$ if and only if:
\[
\prod_{x:X}T_{(x,\refl_{f(x)})}(\mathrm{fib}_f(f(x)))=0
\]
On the other hand having fibers with trivial tangent space is equivalent to:
\[
\prod_{y:Y}\prod_{x:X}\prod_{p:f(x)=y} T_{(x,p)}(\mathrm{fib}_f(y)) = 0
\]
Both are equivalent by path elimination on $p$.
\end{proof}

\subsection{Unramified schemes are locally standard}

\begin{definition}
A scheme is called standard unramified if it is of the form:
\[\Spec(R[X_1,\cdots,X_n]/P_1,\cdots,P_k)\]
with $k\geq n$ such that the determinant of:
\[\left( \frac{\partial P_i}{\partial X_j}\right)_{1\leq i,j\leq n}\]
is invertible in $R[X_1,\cdots,X_n]/P_1,\cdots,P_k$.
\end{definition}


\begin{lemma}\label{standard-unramified-is-unramified}
A standard unramfied scheme is indeed unramified.
\end{lemma}

\begin{proof}
Given $X$ standard unramified, for all $x:X$ by \Cref{kernel-is-tangent-of-fibers} we have an exact sequence:
  \begin{center}
    \begin{tikzcd}
      0\ar[r] & T_x(X)\ar[r] & R^n\ar[r,"dP_x"] & R^k
    \end{tikzcd}
  \end{center}
  But since $dP_x$ is represented by the Jacobian matrix $\frac{\partial P_i}{\partial X_j}(x)$, the invertibility condition means $dP_x$ is injective and we can conclude.
\end{proof}

\begin{proposition}
  \label{unramified-iff-locally-std-unramified}
A scheme is unramified if and only if it has a cover by standard unramified schemes.
\end{proposition}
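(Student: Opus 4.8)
The plan is to prove the two implications separately; in both I will use that being formally unramified is simultaneously preserved and reflected by a finite open cover (\Cref{etale-zariski-local}), so that one may work one affine chart at a time, together with the fact that standard unramified schemes are affine.

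First I would show a standard unramified scheme is unramified, which with \Cref{etale-zariski-local} gives the ``if'' direction. So let $Y=\Spec(R[X_1,\dots,X_n]/(P_1,\dots,P_k))$ with $k\ge n$ and $\delta\colonequiv\det(\partial P_i/\partial X_j)_{1\le i,j\le n}$ invertible. Writing $Y=\mathrm{fib}_P(0)$ for $P\colonequiv(P_1,\dots,P_k):R^n\to R^k$, \Cref{kernel-is-tangent-of-fibers} and \Cref{An-dimension-n} identify $T_y(Y)$ with the kernel of the Jacobian $dP_y:R^n\to R^k$. Any $v$ in this kernel is killed in particular by the top $n\times n$ block, which has invertible determinant $\delta(y)$ and hence is invertible, so $v=0$. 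Thus $T_y(Y)=0$ for all $y:Y$, so $Y$ is formally unramified by \Cref{unramified-affine-characterisation}.

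For the converse, let $X$ be unramified. Choosing an affine open cover of $X$ and applying \Cref{etale-zariski-local}, I may assume $X=\Spec A$ with $A=R[X_1,\dots,X_n]/(P_1,\dots,P_m)$. Consider the $A$-linear map $\Phi:A^m\to A^n$ with $\Phi(e_i)=(\partial P_i/\partial X_1,\dots,\partial P_i/\partial X_n)$. Its base change along evaluation at a point $x:\Spec A$ is the transpose of $dP_x$, so dualizing the copresentation $T_x(X)\hookrightarrow R^n\to R^m$ of \Cref{tangent-finite-copresented} via \Cref{dual-of-fcop-fp} shows the fibre of $\mathrm{coker}(\Phi)$ at $x$ is $T_x(X)^\star$, which is $0$ since $X$ is unramified (\Cref{unramified-affine-characterisation}). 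Hence the finitely presented $A$-module $\mathrm{coker}(\Phi)$ has vanishing fibre at every point, so it is zero by synthetic quasi-coherence (\cite[Theorem 8.2.3]{draft}). Therefore $\Phi$ is surjective; since $A^n$ is free it splits, giving $B\in A^{n\times m}$ with $BJ=I_n$ for $J\colonequiv(\partial P_i/\partial X_j)\in A^{m\times n}$. By the Cauchy--Binet formula $1=\det(BJ)=\sum_S c_S\delta_S$, where $S$ ranges over the $n$-element subsets of $\{1,\dots,m\}$, $c_S:A$, and $\delta_S:A$ is the corresponding $n\times n$ minor of $J$; hence the finitely many basic opens $D(\delta_S)\subseteq X$ cover $X$.

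It then remains to check each $D(\delta_S)$ is standard unramified. After renumbering the $P_i$ so that $S=\{1,\dots,n\}$ and writing $\delta\colonequiv\delta_S$, we have $D(\delta)=\Spec(R[X_1,\dots,X_n,T]/(P_1,\dots,P_n,T\delta-1,P_{n+1},\dots,P_m))$; here there are $n+1$ variables and $m+1\ge n+1$ equations, and the Jacobian of the first $n+1$ equations $P_1,\dots,P_n,T\delta-1$ with respect to $X_1,\dots,X_n,T$ is block lower triangular with diagonal blocks $(\partial P_i/\partial X_j)_{1\le i,j\le n}$ and $(\delta)$, so its determinant is $\delta^2$, which is invertible on $D(\delta)$. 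This would complete the argument. The only non-routine ingredient is the passage from the pointwise to the global vanishing of $\mathrm{coker}(\Phi)$, which is where the duality axiom really enters via quasi-coherence; everything else — the Zariski-local reductions, the splitting-plus-Cauchy--Binet step, and the block-triangular Jacobian computation — is bookkeeping.
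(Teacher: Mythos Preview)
Your proof is correct. The ``if'' direction matches the paper's: both identify $T_y(Y)$ with the kernel of the Jacobian and observe it vanishes once the top $n\times n$ block is invertible. For the converse, however, the paper takes a shorter pointwise route: at each $x$, the condition $T_x(X)=0$ means (via \Cref{dual-of-fcop-fp}, or directly from duality) that $dP_x^{T}:R^k\to R^n$ is surjective, hence splits, and Cauchy--Binet together with locality of $R$ yields an invertible $n$-minor \emph{at that point}; one then covers by the opens $D(\delta_S)$. You instead lift everything to the $A$-module level, invoking quasi-coherence to pass from pointwise vanishing of $T_x(X)^\star$ to global vanishing of $\mathrm{coker}(\Phi)$, and only then apply Cauchy--Binet over $A$. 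This global detour is valid but buys nothing extra here, since the resulting cover by the $D(\delta_S)$ is identical; the pointwise argument avoids the appeal to \cite[Theorem 8.2.3]{draft} entirely. On the other hand, your final step --- introducing the auxiliary variable $T$, reordering the relations, and computing the block-triangular $(n+1)\times(n+1)$ Jacobian with determinant $\delta^2$ --- makes explicit what the paper only gestures at with ``reordering variables'', and is indeed how one checks that $D(\delta_S)$ literally matches the form in the definition of standard unramified.
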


\begin{proof}
By \Cref{etale-zariski-local} and \Cref{standard-unramified-is-unramified}, we get the converse.

  For the direct implication, by \Cref{etale-zariski-local} it is enough to consider an affine scheme: 
  \[
  X=\Spec(R[X_1,\cdots,X_n]/P_1,\cdots,P_k)
  \]
 We reason as in \Cref{standard-unramified-is-unramified} to get that the Jacobian matrix $\left(\frac{\partial P_i}{\partial X_j}(x)\right)$ is invertible for all $x:X$, which means that $n\leq k$ and the Jacobians matrix has an invertible $n$-minor. We cover by principal open according to which $n$-minor is invertible and reorder variables and polynomials to get a cover by pieces if the form:
 \[\Spec(R[X_1,\cdots,X_n,Y]/P_1,\cdots,P_k,1-YQ(X))\]
 such that $\left(\frac{\partial P_i}{\partial X_j}(x)\right)_{1\leq i,j\leq n}$ is invertible for all $x:X$ such that $Q(x)\not=0$. If we reorder the quotienting ideal as $P_1,\hdots,P_n,1-YQ(X),P_{n+1},\hdots,P_k$ we get a standard unramified scheme.
\end{proof}

\section{Smooth and étale schemes}
In this section we will give characterizations similar to Section \ref{unramified-characterisation} for smooth and étale schemes.

\subsection{Smooth and étale maps between schemes}

Smooth maps between with a smooth smooth source are precisely submersions.

\begin{corollary}\label{smooth-schemes-iff-submersion}
Let $f:X\to Y$ be a map between schemes with $X$ smooth. Then the following are equivalent:
\begin{enumerate}[(i)] 
\item The map $f$ is smooth.
\item For all $x:X$, the induced map:
\[df : T_x(X)\to T_{f(x)}(Y)\]
is surjective.
\end{enumerate}
\end{corollary}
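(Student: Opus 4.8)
The implication (i)$\Rightarrow$(ii) is already noted to be immediate: by definition a smooth map has formally smooth fibers, and formal smoothness of $\mathrm{fib}_f(f(x))$ at $(x,\mathrm{refl})$ gives surjectivity of the map on $\D(1)$-tangent vectors $T_{(x,\mathrm{refl})}(\mathrm{fib}_f(f(x)))\to 0$ trivially — more precisely, using \Cref{kernel-is-tangent-of-fibers} that $\ker(df_x)=T_{(x,\mathrm{refl})}(\mathrm{fib}_f(f(x)))$ together with the fact that formally smooth schemes have tangent spaces which are... no — rather, surjectivity of $df_x$ follows because for a formally smooth fiber the lift against $\D(1)\hookrightarrow 1$ exists, and this directly produces preimages of tangent vectors. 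I would phrase (i)$\Rightarrow$(ii) by taking $v:T_{f(x)}(Y)$, forming the induced closed dense situation and lifting a map $\D(1)\to X$ along $f$ extending $v$, which is exactly what formal smoothness of the fiber over $f(x)$ provides.

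For the substantive direction (ii)$\Rightarrow$(i), I want to show each fiber $F_y:=\mathrm{fib}_f(y)$ is formally smooth. Fix a point $z=(x,p):F_y$; since formal smoothness is a pointwise/fiberwise condition we may work at $z$, and by path induction on $p$ assume $y=f(x)$, $p=\mathrm{refl}$. The key geometric input is that $X$ is smooth, hence so is the scheme $F_y$ up to the tangent-space defect: concretely, I would use that a first-order lifting problem for $F_y$ against a square-zero extension $\epsilon=0\hookrightarrow 1$ with a given map $\D(1)\to X$ decomposes, via \Cref{kernel-is-tangent-of-fibers} and \Cref{duality-infinitesimal-tangent}, into (a) solving the corresponding lifting problem in $X$ — possible since $X$ is smooth — and (b) correcting the result so that it lands in the fiber, i.e.\ so that its $f$-image becomes constant. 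Step (b) is controlled by the obstruction living in $T_{f(x)}(Y)$: the difference between the lift's image and the constant tangent vector is an element of $T_{f(x)}(Y)$, and surjectivity of $df_x$ lets us subtract a tangent vector at $x$ mapping to it, after which the adjusted lift lies in $F_y$. So the proof reduces the fiberwise smoothness of $f$ to: $X$ smooth (given) $+$ $df_x$ surjective (hypothesis (ii)) $+$ the torsor/affine-space structure on the set of lifts established in the proof of \Cref{connection-to-ega-definition} and \Cref{from-D1-to-D2}.

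Concretely I would assemble it as follows. First reduce to $N=(\epsilon)$ square-zero and to checking formal smoothness pointwise, so the goal is: given $\phi:\epsilon=0\to F_y$, merely find a lift $1\to F_y$. Compose with $F_y\to X$ to get $\epsilon=0\to X$; since $X$ is formally smooth, merely obtain $\tilde x:1\to X$ extending it. Now $f(\tilde x)$ need not equal $y$, but $f(\tilde x)\in N_1(y)$ since it agrees with $y$ when $\epsilon=0$; by \Cref{duality-infinitesimal-tangent} this difference is an element $w$ of $T_y(Y)^\star$-dual data — more usefully, $f(\tilde x)$ viewed relative to $y$ is a tangent-type vector, and we want to kill it. By surjectivity of $df_x$ (and using that $\tilde x\in N_1(x)$, so $T_{\tilde x}\cong T_x$ via \Cref{N1-functor}), merely choose $v:T_x(X)$ with $df_x(v)=w$; then translate $\tilde x$ by $-v$ inside $N_1(x)$ — using the module/torsor structure on $N_1(x)=\D(T_x(X)^\star)$ from \Cref{duality-infinitesimal-tangent} — to get $\tilde x'$ with $f(\tilde x')=y$ still extending $\phi$ (the translation by a square-zero vector does not disturb the value at $\epsilon=0$). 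Then $\tilde x'$ is the required lift into $F_y$.

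The main obstacle I anticipate is making precise the "correction" step: identifying the discrepancy $f(\tilde x)$ vs.\ $y$ as a genuine element of $T_{f(x)}(Y)$ in a way compatible with $df_x$, and checking that translating $\tilde x$ by $-v$ within $N_1(x)$ both lands us in the fiber and preserves the restriction to $\epsilon=0$. This requires the identification $N_1(x)=\D(T_x(X)^\star)$ (\Cref{duality-infinitesimal-tangent}), its functoriality (\Cref{N1-functor}), and the fact that $df_x$ under these identifications is the map induced by $f$ on first-order neighborhoods — plus care that all "merely"s compose, which is fine since $\D(1)$ and $\epsilon=0$ have choice as closed propositions. Once this bookkeeping is in place the argument is a short diagram chase; alternatively, one could phrase the whole thing via \Cref{smooth-sigma-closed} by exhibiting $F_y$ as built from $X$ and a submersion condition, but the direct lifting argument above seems cleanest.
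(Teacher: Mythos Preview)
Your overall strategy for (ii)$\Rightarrow$(i) is exactly the paper's: lift $\phi$ to some $x:X$ using formal smoothness of $X$, observe that $y\in N_1(f(x))$, then use surjectivity of $df_x$ to correct the lift so that it lands in the fiber. The gap you yourself flag as ``the main obstacle'' is real, however: there is no module or torsor structure on $N_1(x)=\D(T_x(X)^\star)$ to translate by (for instance $N_1(0)\subseteq R^n$ is not closed under addition), so ``translate $\tilde x$ by $-v$'' does not typecheck as stated. The paper sidesteps this via \Cref{neighborhood-tangent-correspondence-smooth}: surjectivity of $df_x$ between finitely copresented modules is equivalent to the induced pointed map $N_1(x)\to N_1(f(x))$ merely admitting a pointed section $s$ (this rests on projectivity of finitely copresented modules). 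Then $s(y)$ is directly the required lift into $\mathrm{fib}_f(y)$, and for $p:\epsilon=0$ one has $\phi(p)=x=s(f(x))=s(y)$. So your instinct is correct but the mechanism should be a \emph{section of the first-order neighborhood map} rather than a subtraction of tangent vectors; once phrased that way, all the bookkeeping you worry about evaporates. Your sketch of (i)$\Rightarrow$(ii) agrees with the paper's short argument (lift pointwise over $\D(1)$ using formal smoothness of the fibers, then invoke choice for $\D(1)$).
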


\begin{proof}
(i) implies (ii). Assume given $v:T_{f(x)}(Y)$, then for all $t:\D(1)$ we have a map:
\[t=0 \to \fib_f(v(t))\]
with constant value $x$. So since $f$ is smooth we merely have $w_t:\fib_f(v(t))$ such that $t=0$ implies $w_t=x$. We conclude using choice over $\D(1)$.

(ii) implies (i). Assume given $y:Y$ and $\epsilon:R$ such that $\epsilon^2=0$ and try to merely find a dotted lift in:
 \begin{center}
      \begin{tikzcd}
        \epsilon=0\ar[r,"\phi"]\ar[d] & \fib_f(y)\\
       1 \ar[ru,dashed] & \\
      \end{tikzcd}
    \end{center}
    Since $X$ is formally smooth we merely have an $x:X$ such that:
\[\prod_{p:\epsilon=0} \phi(p)=x\]
and therefore:
\[ \epsilon=0 \to y=f(x)\]
which means that $y\in N_1(f(x))$. 

We use \Cref{duality-infinitesimal-tangent} and \Cref{neighborhood-tangent-correspondence-smooth} to get that the map $N_1(x)\to N_1(f(x))$ induced by $f$ merely has a section $s$ sending $f(x)$ to $x$. Then $s(y):\fib_f(y)$ is such that for all $p:\epsilon=0$ we have that:
\[\phi(p) = x = s(f(x)) = s(y)\]
\end{proof}

\begin{corollary}\label{etale-schemes-iff-local-iso}
Let $f:X\to Y$ be a map between schemes with $X$ smooth. Then the following are equivalent:
\begin{enumerate}[(i)]
\item The map $f$ is étale. 
\item For all $x:X$, the induced map:
\[df : T_x(X)\to T_{f(x)}(Y)\]
is an iso.
\end{enumerate}
\end{corollary}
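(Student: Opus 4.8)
The plan is to reduce the étale case to the already-proven smooth and unramified characterizations. Recall that a map is étale exactly when it is both formally smooth and formally unramified (this was noted right after Definition \ref{def-etale-closed-dense}, and by the discussion in §4 it applies to maps between schemes). So I would argue: $f$ étale $\iff$ $f$ smooth and $f$ unramified $\iff$ ($df_x$ surjective for all $x$) and ($df_x$ injective for all $x$) $\iff$ $df_x$ an isomorphism for all $x$. The surjectivity equivalence is Corollary \ref{smooth-schemes-iff-submersion}, which applies precisely because $X$ is assumed smooth. The injectivity equivalence is Proposition \ref{unramified-map-characterisation}, which holds for any map between schemes with no smoothness hypothesis needed.

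First I would spell out the (i)$\Rightarrow$(ii) direction: if $f$ is étale then it is in particular smooth, so by Corollary \ref{smooth-schemes-iff-submersion} each $df_x$ is surjective; and it is in particular unramified, so by Proposition \ref{unramified-map-characterisation} each $df_x$ is injective; hence each $df_x$ is an isomorphism. Then for (ii)$\Rightarrow$(i): if every $df_x$ is an isomorphism, then in particular every $df_x$ is surjective, so $f$ is smooth by Corollary \ref{smooth-schemes-iff-submersion} (using $X$ smooth), and every $df_x$ is injective, so $f$ is unramified by Proposition \ref{unramified-map-characterisation}; being both smooth and unramified, $f$ is étale.

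I do not expect a real obstacle here — the only thing to be slightly careful about is that "étale = smooth and unramified" is stated in the excerpt at the level of types and formally-étale maps (via fibers), and we are now working with the notions "étale/smooth/unramified" for maps between schemes, which are defined so that a map is étale iff its fibers are; so the equivalence transfers fiberwise without friction. The one genuine input that makes the corollary nontrivial is the smoothness hypothesis on $X$, which is exactly what Corollary \ref{smooth-schemes-iff-submersion} requires; Proposition \ref{unramified-map-characterisation} needs nothing extra.

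\begin{proof}
By the remark following \Cref{def-etale-closed-dense}, a map between schemes is étale if and only if it is both smooth and unramified. Since $X$ is smooth, \Cref{smooth-schemes-iff-submersion} says $f$ is smooth if and only if $df_x$ is surjective for all $x:X$. By \Cref{unramified-map-characterisation}, $f$ is unramified if and only if $df_x$ is injective for all $x:X$. Combining, $f$ is étale if and only if $df_x$ is both surjective and injective for all $x:X$, i.e.\ an isomorphism for all $x:X$.
\end{proof}
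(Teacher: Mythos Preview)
Your proof is correct and follows exactly the same approach as the paper, which simply says ``We apply \Cref{unramified-map-characterisation} and \Cref{smooth-schemes-iff-submersion}.'' Your version just makes the decomposition étale $=$ smooth $+$ unramified and the injective $+$ surjective $=$ iso step explicit.
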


\begin{proof}
We apply \Cref{unramified-map-characterisation} and \Cref{smooth-schemes-iff-submersion}.
\end{proof}

\begin{remark}\label{smooth-maps-are-submersions}
In both previous results, we did not use the smoothness hypothesis on $X$ to prove (i) implies (ii).
\end{remark}

\subsection{Smooth schemes have free tangent spaces}

\begin{lemma}\label{smooth-implies-smooth-tangent}
Assume $X$ is a smooth scheme. Then for any $x:X$ the type $T_x(X)$ is smooth.
\end{lemma}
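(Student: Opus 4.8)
The plan is to exploit the fact that the tangent space $T_x(X)$ is itself a scheme — indeed a first order disk around $0$ — and to transport smoothness from $X$ to $T_x(X)$ via the correspondence between $N_1(-)$ and tangent spaces. Concretely, by \Cref{tangent-finite-copresented} the module $M\colonequiv T_x(X)$ is finitely copresented, hence a scheme, and by \Cref{tangent-copresented-modules} we have $T_0(M)=M$. Moreover \Cref{duality-infinitesimal-tangent} identifies $N_1(0)$ inside $M$ with $\D(M^\star)$, and by \Cref{tangent-copresented-modules} again $T_0(\D(M^\star))=M^{\star\star}=M$, so $M$ and $X$ at least have "the same first-order data" at the relevant points.

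The key step is to produce, for a closed dense proposition $P$ and a map $P\to M=T_x(X)$, a mere lift to $1\to M$. First I would unwind a map $t\colon P\to T_x(X)$ as a map $P\times\D(1)\to X$ sending $P\times\{0\}$ and $\{*_P\}\times\D(1)$ appropriately — more precisely, using that $T_x(X)=\{s:\D(1)\to X\mid s(0)=x\}$ and that $P$ is a proposition, a map $P\to T_x(X)$ is the same as a map $q\colon (P\times \D(1))\cup(\{0\}\times 1)\to X$, i.e.\ a map out of a "closed dense" thickening relative to $\D(1)$. Since $X$ is formally smooth and the inclusion of this thickened sub-scheme into $1\times\D(1)=\D(1)$ is (pointwise, after fixing $\epsilon:\D(1)$) the inclusion of a closed dense proposition, one can merely extend $q$ to a map $\tilde q\colon \D(1)\to X$; because $P$ gives $\neg\neg$-information this extension will still satisfy $\tilde q(0)=x$, hence defines an element of $T_x(X)$ restricting to $t$ on $P$. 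This is the mere lift required for formal smoothness of $T_x(X)$.

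The main obstacle I expect is bookkeeping the "relative" closed dense proposition: the inclusion one actually needs to lift against is not a closed dense proposition of the point but of $\D(1)$ (or, dually, the closed subscheme of $\Spec(R\oplus R)$ cut out by the ideal generated by $\epsilon$ together with the nilpotents defining $P$), and one must check that this ideal is again finitely generated and nilpotent so that \Cref{etale-zariski-local}-style or the fiberwise lifting argument of \Cref{connection-to-ega-definition} applies. Equivalently, one can package this as: a map $P\to T_x(X)$ is a map $Q\to X$ for a closed dense proposition $Q$ (namely $Q$ defined by the nilpotents of $P$ together with the single square-zero generator of $\D(1)$, suitably combined), and then formal smoothness of $X$ directly gives the lift. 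Once the correct $Q$ is identified the rest is the routine verification that the lift lands in the tangent space and restricts correctly, which I would not spell out in detail.

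Finally, I would note that formal unramifiedness of $T_x(X)$ is automatic — it is a submodule of a finite free module $R^n$ by \Cref{tangent-finite-copresented}, hence a subtype of the formally unramified (indeed formally étale) type $R^n$, so \Cref{prop-are-unramified} applies — but since the statement only claims formal smoothness, establishing the lifting property above suffices.
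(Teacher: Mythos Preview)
Your approach is different from the paper's and can be made to work, but there are real gaps in the execution. The paper argues indirectly: it shows the projection $p\colon X^{\D(1)}\to X$ is smooth by applying \Cref{smooth-schemes-iff-submersion} (the total space $X^{\D(1)}$ being smooth since $\D(1)$ has choice), reducing to surjectivity of $dp$ at every point; this in turn unwinds to lifting the map $\psi_{v,u}\colon\D(2)\to X$ of \Cref{from-D1-to-D2} along the closed dense inclusion $\D(2)\hookrightarrow\D(1)\times\D(1)$. Your direct lifting argument avoids the detour through $T(T(X))$, which is a genuine simplification, but it requires more care about the domain of the lift than you give it.

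Two concrete problems. First, the alternative packaging ``a map $P\to T_x(X)$ is a map $Q\to X$ for a closed dense proposition $Q$'' is false: the domain must carry a $\D(1)$'s worth of points and cannot be a proposition. The correct statement --- which \emph{does} make your argument go through after reducing to $X$ affine --- is that a map $P\to T_x(X)$ is the same as an $(R/N_0)$-derivation at $x$ in the sense of \Cref{derivation-pointwise} (where $P=\Spec(R/N_0)$, using $R^P=R/N_0$), hence by \Cref{tangent-are-derivation} a pointed map $\D(R/N_0)\to(X,x)$; the inclusion $\D(R/N_0)\hookrightarrow\D(1)$ has fiber $(N_0\epsilon=0)$ over $\epsilon$, which is indeed closed dense, and fiberwise smoothness of $X$ plus choice for $\D(1)$ then produces the desired $\tilde q$. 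Second, your justification that $\tilde q(0)=x$ ``because $P$ gives $\neg\neg$-information'' is the wrong reason and would not work: lifting only from $P\times\D(1)$ yields merely $P\to(\tilde q(0)=x)$, and equality in $X$ is not $\neg\neg$-stable. The actual reason is that $0$ already lies in $\D(R/N_0)$ (the fiber of the inclusion over $\epsilon=0$ is $\top$), so the extension is forced to agree with the given basepoint. Finally, your aside that $R^n$ is formally \'etale is false for $n\geq 1$ by \Cref{unramified-affine-characterisation}; harmless here, but worth deleting.
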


\begin{proof}
Consider $T(X) = X^{\mathbb{D}(1)}$ the tangent bundle of $X$. We have to prove that the map:
\[p:T(X)\to X\]
is formally smooth. Both source and target are schemes, and the source is formally smooth because $X$ is smooth and $\mathbb{D}(1)$ has choice. So by \Cref{smooth-schemes-iff-submersion} it is enough to prove that for all $x:X$ and $v:T_x(X)$ the induced map:
\[dp:T_{(x,v)}(T(X))\to T_x(X)\]
is surjective. 

Consider $u:T_x(X)$. By unpacking the definition of tangent spaces and computing $dp(w)$, we see that merely finding $w:T_{(x,v)}(T(X))$ such that $dp(w) = u$ means merely finding:
\[\phi : \mathbb{D}(1) \times \mathbb{D}(1) \to X\]
such that for all $t:\mathbb{D}(1)$ we have that:
\[\phi(0,t) = v(t)\]
\[\phi(t,0) = u(t)\]

But from \Cref{from-D1-to-D2} we know that there exists a unique:
\[\psi_{v,u} : \mathbb{D}(2)\to X\]
such that:
\[\psi_{v,u}(0,t) = v(t)\]
\[\psi_{v,u}(t,0) = u(t)\]

Then since $X$ is smooth and the fibers of:
\[\mathbb{D}(2) \to\mathbb{D}(1) \times \mathbb{D}(1) \]
are closed dense, we conclude from $\mathbb{D}(1) \times \mathbb{D}(1)$ having choice that there merely exists a lift of $\psi_{v,u}$ to $\mathbb{D}(1) \times \mathbb{D}(1)$, which gives us the $\phi$ we wanted.
\end{proof}

\begin{lemma}\label{smooth-kernel-decidable}
Assume given a linear map:
\[M:R^m\to R^n\] 
which has smooth kernel $K$. Then we can decide whether $M=0$.
\end{lemma}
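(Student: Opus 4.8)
The plan is to reduce the question ``is $M=0$?'' to a question about a closed, formally étale proposition, and then invoke \Cref{closed-and-etale-decidable}. Observe that $M=0$ is equivalent to the vanishing of all $mn$ matrix entries $a_{ij}:R$, hence to a closed proposition $P$ of the form $\bigwedge_{i,j} a_{ij}=0$. So it suffices to show that $P$ is formally étale; being a closed formally étale proposition, it is then decidable by \Cref{closed-and-etale-decidable}, and deciding $P$ is exactly deciding whether $M=0$.

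To see that $P$ is formally étale, I would relate $P$ to the kernel $K$. The key point is that the linear map $M$ is zero precisely when $K=R^m$, i.e.\ when the inclusion $K\hookrightarrow R^m$ is an isomorphism. More usefully: the proposition $P$ is equivalent to the statement that every standard basis vector $e_k:R^m$ lies in $K$, i.e.\ to $\prod_{k}\mathrm{is\text{-}zero}(Me_k)$, and each $Me_k=0$ is again closed. The cleanest route is to exhibit $P$ as a retract of (or equivalent to) a proposition built from the formally smooth type $K$ together with the decidable/étale data of $R^m$. Concretely, since $K\subseteq R^m$ is the kernel of $M$, the quotient $R^m/K$ embeds into $R^n$ (it is the image of $M$), so $R^m/K$ is a finitely copresented module; by \Cref{neighborhood-tangent-correspondence-smooth} and the projectivity results of the previous section, $K$ formally smooth forces the surjection $R^m\twoheadrightarrow R^m/K$ to split, exhibiting $R^m/K$ as a formally smooth \emph{and} finitely copresented module. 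Then $M=0$ iff $R^m/K=0$, and I would show the proposition ``$R^m/K=0$'' is formally étale: it is closed (it is the conjunction of the vanishing of the images of the generators), and it is formally smooth as a proposition because $R^m/K$ is formally smooth and a proposition is its own $(-1)$-truncation — more precisely, ``$R^m/K=0$'' is equivalent to $\neg\neg(R^m/K=0)$ once we know, via \Cref{tangent-copresented-modules} and \Cref{duality-infinitesimal-tangent}, that the first-order disk $N_1(0)$ in $R^m/K$ is all of $R^m/K$, so a square-zero infinitesimal witness of triviality upgrades to an actual one.

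I expect the main obstacle to be making precise the passage ``$K$ formally smooth $\Rightarrow$ the relevant finite proposition is formally smooth''. The subtlety is that formal smoothness is not stable under identity types, so one cannot naively transport smoothness of $K$ to smoothness of the proposition $M=0$; instead one must genuinely use that $R^m/K$ is finitely copresented (hence projective, hence the splitting in \Cref{neighborhood-tangent-correspondence-smooth} applies) to get a handle on its triviality. Once $M=0$ is identified with a closed \emph{and} formally étale proposition, the conclusion is immediate from \Cref{closed-and-etale-decidable}: such a proposition is $\neg\neg$-stable, and the Nakayama argument there produces an idempotent $e$ with $eI=0$ and $1-e\in I$ for the defining ideal $I$, whose two cases give $M=0$ or a contradiction, hence decidability.
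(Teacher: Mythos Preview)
Your overall strategy --- show that the closed proposition $M=0$ is formally \'etale (equivalently $\neg\neg$-stable) and then invoke \Cref{closed-and-etale-decidable} --- is exactly the paper's strategy. The problem is in how you try to establish $\neg\neg$-stability.

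There are two genuine gaps. First, the claim ``$R^m/K$ embeds into $R^n$ (it is the image of $M$), so $R^m/K$ is a finitely copresented module'' is not justified: a finitely generated submodule of $R^n$ is not automatically the \emph{kernel} of a linear map between finite free modules, which is what finitely copresented means. Without this, you cannot invoke the projectivity results or \Cref{neighborhood-tangent-correspondence-smooth}. Second, and more seriously, you never really use the hypothesis that $K$ is formally smooth. The projectivity of finitely copresented modules holds unconditionally, so your splitting argument, even if it went through, would not depend on the smoothness of $K$; and your subsequent claim that ``$N_1(0)$ in $R^m/K$ is all of $R^m/K$'' is unsupported --- formal smoothness of $R^m/K$ (as a retract of $R^m$) says nothing like that, and \Cref{duality-infinitesimal-tangent} only identifies $N_1(0)$ with a disk, not with the whole module.

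The paper's argument is much more direct and uses the smoothness of $K$ exactly once, in the obvious place. Assume $\neg\neg(M=0)$; then $M=0$ is a closed dense proposition. For each standard basis vector $x_i:R^m$, the constant map $(M=0)\to K$, $\ast\mapsto x_i$, is well-typed (since $M=0$ forces $x_i\in K$), and formal smoothness of $K$ lifts it to a point $y_i:K$ with $(M=0)\to x_i=y_i$, hence $\neg\neg(x_i=y_i)$. Now $(y_1,\dots,y_m)$ is not-not equal to the standard basis, and ``being a basis'' is an open (hence $\neg\neg$-stable) proposition, so the $y_i$ form a basis of $R^m$ lying inside $K$, whence $K=R^m$ and $M=0$. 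This is the step you were missing: feed the basis vectors themselves into the lifting property of $K$, rather than passing to the cokernel.
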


\begin{proof}
Since $M=0$ is closed, by \Cref{not-not-stable-prop-etale} and \Cref{closed-and-etale-decidable} it is enough to prove that it is $\neg\neg$-stable to conclude that it is decidable. Assume $\neg\neg(M=0)$, then for any $x:R^m$ we have a dotted lift in:
 \begin{center}
      \begin{tikzcd}
        M=0\ar[d] \ar[r,"\_\, \mapsto x"] & K \\
       1 \ar[dashed,ru] &
      \end{tikzcd}
\end{center}
because $K$ is formally smooth, so that we merely have $y\in K$ such that: 
\[M=0\to x=y\]
which implies that $\neg\neg(x=y)$ since we assumed $\neg\neg(M=0)$.

Then considering a basis $(x_1,\cdots,x_n)$ of $R^m$, we get $(y_1,\cdots,y_n)$ such that for all $i$ we have that $y_i\in K$ and $\neg\neg(y_i=x_i)$. But then we have that $(y_1,\cdots,y_n)$ is infinitesimally close to a basis and that being a basis is an open proposition, so that $(y_1,\cdots,y_n)$ is a basis and $K=R^m$ so $M=0$.
\end{proof}

\begin{lemma}\label{smooth-corpresented-implies-free}
Assume that $K$ is a finitely copresented module that is also smooth. Then it is finite free.
\end{lemma}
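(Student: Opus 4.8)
The plan is to induct on the rank of the ambient free module in a copresentation of $K$. Since $K$ is finitely copresented we may write $K=\ker(M\colon R^a\to R^b)$ for some $a,b:\N$, and since $K$ is formally smooth by hypothesis, \Cref{smooth-kernel-decidable} applies to $M$. If $a=0$ then $K=0$ is finite free, so assume $a\geq 1$. By \Cref{smooth-kernel-decidable} we may decide whether $M=0$; if $M=0$ then $K=R^a$ and we are done, so assume $M\neq 0$.

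I claim that some entry $a_{i_0j_0}$ of the matrix of $M$ is invertible. Indeed, if every entry $a_{ij}$ were a non-unit, then each would satisfy $\neg\neg(a_{ij}=0)$ (recall that $x:R$ is invertible precisely when $x\neq 0$); since double negation commutes with finite conjunctions this gives $\neg\neg(M=0)$, hence $M=0$ by \Cref{smooth-kernel-decidable}, contradicting $M\neq 0$. So the proposition ``$M$ has an invertible entry'', which is open (a finite disjunction of $a_{ij}\neq 0$), is $\neg\neg$-true and therefore holds, open propositions being $\neg\neg$-stable as used in the proof of \Cref{smooth-kernel-decidable}. Fix such an invertible entry $a_{i_0j_0}$. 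Using it as a pivot, invertible column operations on $R^a$ and row operations on $R^b$ bring $M$ into block-diagonal form with a $1\times 1$ block $a_{i_0j_0}$ and a second block $M'\colon R^{a-1}\to R^{b-1}$ (the Schur complement). These operations are isomorphisms of $R^a$ and $R^b$, so they induce an isomorphism $K=\ker M\cong\ker M'$. Now $\ker M'$ is finitely copresented, being a kernel of a map between finite free modules, and, being isomorphic to $K$, is formally smooth; by the inductive hypothesis $\ker M'\cong K$ is finite free, completing the induction.

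The main obstacle is the case $M\neq 0$: one must extract an honest invertible entry (equivalently, split off a rank-one free summand) rather than merely a $\neg\neg$-existing one. This is exactly where formal smoothness of $K$ is indispensable, since for a general finitely copresented module the copresenting map can be $\neg\neg$-zero without being zero (e.g.\ multiplication by a nonzero nilpotent), and where one relies on open propositions being $\neg\neg$-stable. Checking that the reduction genuinely decreases $a$ and preserves ``finitely copresented and formally smooth'' is routine.
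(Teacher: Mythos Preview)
Your proof is correct and follows essentially the same strategy as the paper: induct on the size of the source of a copresenting map, use \Cref{smooth-kernel-decidable} to decide $M=0$, and in the case $M\neq 0$ pivot on an invertible entry to reduce the dimension. Your justification for the existence of an invertible entry is more detailed than the paper's (which simply asserts it); note that the detour through decidability is not actually needed there, since $M\neq 0$ already equals $\neg\neg(\exists i,j.\ a_{ij}\text{ invertible})$ and the latter is open, hence $\neg\neg$-stable.
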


\begin{proof}
Assume a finite copresentation:
\[0\to K\to R^m\overset{M}{\to} R^n\]
We proceed by induction on $m$. By \Cref{smooth-kernel-decidable} we can decide whether $M=0$ or not.
\begin{itemize}
\item If $M=0$ then $K=R^m$ and we can conclude.
\item If $M\not=0$ then we can find a non-zero coefficient in the matrix corresponding to $M$, and so up to base change it is of the form:
\[
\begin{pmatrix}
1 & \begin{matrix}0&\cdots & 0\end{matrix}  \\
\begin{matrix}0\\ \vdots\\ 0\end{matrix} & \widetilde{M} \\
\end{pmatrix}
\]
But then we know that the kernel of $M$ is isomorphic to the kernel of $\widetilde{M}$, and by applying the induction hypothesis we can conclude that it is finite free.
\end{itemize}
\end{proof}

\begin{proposition}\label{smooth-have-free-tangent}
Let $X$ be a smooth scheme. Then for any $x:X$ we have that $T_x(X)$ is finite free.
\end{proposition}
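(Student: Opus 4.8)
The plan is to combine three facts already established in the excerpt. First, by \Cref{tangent-finite-copresented}, since $X$ is a scheme and $x : X$, the tangent space $T_x(X)$ is a finitely copresented $R$-module: writing $X$ locally as $\mathrm{fib}_P(0)$ for a linear map $P : R^m \to R^n$ and using \Cref{kernel-is-tangent-of-fibers} together with \Cref{An-dimension-n}, $T_x(X)$ is the kernel of a linear map between finite free modules, hence sits in an exact sequence $0 \to T_x(X) \to R^m \to R^n$.

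Second, because $X$ is \emph{smooth} (not merely a scheme), \Cref{smooth-implies-smooth-tangent} tells us that $T_x(X)$ is moreover formally smooth — this is the place where the smoothness hypothesis is actually consumed, via the observation that the projection $X^{\mathbb{D}(1)} \to X$ is formally smooth and that fibrewise this projection restricts to $T_x(X) \to \{x\}$.

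Finally, I would invoke \Cref{smooth-corpresented-implies-free}: a module that is simultaneously finitely copresented and formally smooth is finite free. Feeding the finite copresentation of $T_x(X)$ from the first step into that lemma yields that $T_x(X)$ is finite free, which is exactly the claim. So the proof is a one-line application of \Cref{tangent-finite-copresented}, \Cref{smooth-implies-smooth-tangent} and \Cref{smooth-corpresented-implies-free} in sequence.

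There is no genuine obstacle left at this stage; the substance of the argument lives in the supporting lemmas. The only point deserving a word of care is making sure the two structures on $T_x(X)$ — the finite copresentation coming from the scheme structure and the formal smoothness coming from the tangent-bundle projection — are structures on \emph{the same} $R$-module, so that \Cref{smooth-corpresented-implies-free} applies verbatim; this is immediate since both are computed from the same $T_x(X)$.
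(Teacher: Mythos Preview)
Your proof is correct and is exactly the paper's argument: cite \Cref{smooth-implies-smooth-tangent} for formal smoothness of $T_x(X)$, then feed the finite copresentation from \Cref{tangent-finite-copresented} into \Cref{smooth-corpresented-implies-free}. One small slip in your recap: the map $P:R^m\to R^n$ cutting out $X$ is polynomial, not linear --- it is the differential $dP_x$ that is the linear map whose kernel is $T_x(X)$ --- but this does not affect the argument since you are only re-explaining an already-proven lemma.
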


\begin{proof}
By \Cref{smooth-implies-smooth-tangent} we have that $T_x(X)$ is formally smooth, so that we can conclude by \Cref{smooth-corpresented-implies-free}.
\end{proof}

The dimension of $T_x(X)$ is called the dimension of $X$ at $x$. By boundedness any smooth scheme is a finite sum of smooth schemes of a fixed dimension.
We can turn this into a definition of dimension which works well in the case of smooth schemes:

\begin{definition}
  \label{definition-smooth-dim-n}
  A scheme is \notion{smooth of dimension $n$}, if it is smooth and all its tangent spaces are finite free of dimension $n$.
\end{definition}

\subsection{Standard étale and standard smooth schemes}

\begin{definition}
A standard smooth scheme of dimension $k$ is an affine scheme of the form:
\[\Spec\big(R[X_1,\cdots,X_n,Y_1,\cdots Y_{k}] / P_1,\cdots,P_n\big)\]
where the determinant of:
\[\left( \frac{\partial P_i}{\partial X_j}\right)_{1\leq i,j\leq n}\]
is invertible in $R[X_1,\cdots,X_n,Y_1,\cdots Y_{k}] / P_1,\cdots,P_n$.
\end{definition}

\begin{definition}
A standard smooth scheme of dimension $0$ is called a \notion{standard étale scheme}.
\end{definition}


\begin{lemma}\label{standard-etale-are-etale}
Standard étale schemes are étale.
\end{lemma}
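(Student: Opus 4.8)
The plan is to reduce to the characterization of étaleness via tangent spaces. A standard étale scheme is $X = \Spec(R[X_1,\dots,X_n]/(P_1,\dots,P_n))$ with the $n\times n$ Jacobian $\left(\frac{\partial P_i}{\partial X_j}\right)$ having invertible determinant. First I would observe that by \Cref{kernel-is-tangent-of-fibers} and \Cref{An-dimension-n}, for each $x:X$ the tangent space $T_x(X)$ is the kernel of the linear map $\mathrm{Jac}(P_1,\dots,P_n)_x : R^n \to R^n$. Since this Jacobian is a square matrix with invertible determinant at every point (invertibility of the determinant is a global statement here, as it is a unit in the coordinate ring, hence a unit at each point by duality/locality), the map $dP_x$ is an isomorphism, so $T_x(X) = 0$ for all $x:X$. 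By \Cref{unramified-affine-characterisation}, this already shows $X$ is unramified.

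For smoothness, I would use \Cref{smooth-schemes-iff-submersion} — but that requires a smooth domain, so instead the cleaner route is to exhibit $X$ as formally smooth directly, or better, to apply \Cref{etale-schemes-iff-local-iso} after establishing smoothness some other way. Actually the most economical path: a standard étale scheme is a standard smooth scheme of dimension $0$ by definition, so it suffices to prove the more general statement that standard smooth schemes are smooth (this is presumably \Cref{standard-smooth-is-smooth} referenced in the introduction). Assuming that result, a standard étale scheme is smooth, and then since all its tangent spaces vanish, \Cref{smooth-schemes-iff-submersion} (applied to the map $X \to 1$, whose differentials $T_x(X) \to 0$ are trivially surjective) gives that $X$ is smooth, while \Cref{unramified-map-characterisation} or \Cref{unramified-affine-characterisation} gives unramified; hence $X$ is étale by the remark that étale $=$ unramified $+$ smooth.

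If \Cref{standard-smooth-is-smooth} is not available to cite at this point in the text, I would instead prove formal smoothness of $X$ by hand, following the hint given before the last example in the excerpt: given a closed dense $P$ with $P = \Spec(R/N)$, $N$ finitely generated nilpotent, and a point $t : P \to X$, i.e.\ a tuple $y \in (R/N)^n$ with $P_i(y) = 0$, I would lift $y$ arbitrarily to $\tilde y \in R^n$ and then use Newton's method: since the Jacobian at $\tilde y$ is invertible modulo $N$ hence invertible (as $N$ is nilpotent, so a matrix invertible mod $N$ is invertible), the Newton iteration $\tilde y \mapsto \tilde y - \mathrm{Jac}(\tilde y)^{-1} P(\tilde y)$ converges in finitely many steps (the error ideal gets squared each time, and $N^{2^k} = 0$ eventually) to an honest solution $y^* \in R^n$ with $P(y^*) = 0$ lifting $y$. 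This gives the required lift $1 \to X$.

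The main obstacle I anticipate is the smoothness half, specifically making the Newton iteration argument precise: one must track that the "error" $P(\tilde y_k)$ lies in $N^{2^k}$ and that invertibility of the Jacobian propagates through the iteration, and one must handle that $N$ is merely finitely generated nilpotent rather than of a clean form — though one may reduce to $N = (a)$ with $a^2 = 0$ as in the proof of \Cref{connection-to-ega-definition} and iterate. The unramified half is routine given the tangent space characterization.
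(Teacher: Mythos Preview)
Your fallback route---proving formal smoothness directly via a Newton step---is exactly what the paper does, and your unramified-via-tangent-spaces argument is correct. However, your ``most economical path'' through \Cref{standard-smooth-is-smooth} is circular: in this paper, \Cref{standard-smooth-is-smooth} is proved \emph{after} \Cref{standard-etale-are-etale} and its proof explicitly invokes \Cref{standard-etale-are-etale} (the fibers of the projection to $\A^k$ are standard étale, hence étale by the lemma you are trying to prove). You anticipated this might happen, and it does, so only your second route survives.

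Compared with the paper, two simplifications are worth noting. First, the paper does not iterate Newton's method over a general nilpotent ideal; it uses the reduction lemma (the one showing that it suffices to lift against $\epsilon=0$ for a single $\epsilon$ with $\epsilon^2=0$) and then a single Newton step $y=x-\epsilon(dP_x)^{-1}a$ where $P(x)=\epsilon a$, which lands exactly on a root because the second-order term already vanishes. Second, the paper does not pass through the tangent-space characterization for unramifiedness: it proves uniqueness of the lift directly, observing that two lifts $y,y'$ agree mod $\epsilon$, so $0=P(y)-P(y')=dP_{y'}(y-y')$ and invertibility of $dP_{y'}$ forces $y=y'$. Your route via $T_x(X)=\ker(dP_x)=0$ and \Cref{unramified-affine-characterisation} is equally valid, just slightly less self-contained.
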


\begin{proof}
Assume given a standard étale algebra:
\[R[X_1,\cdots,X_n]/P_1,\cdots,P_n\]
and write:
\[P:R^n\to R^n\]
for the map induced by $P_1,\cdots,P_n$.

Assume given $\epsilon:R$ such that $\epsilon^2=0$, we need to prove that there is a unique dotted lifting in:
  \begin{center}
      \begin{tikzcd}
       R/\epsilon & R[X_1,\cdots,X_n]/P_1,\cdots,P_n\ar[l,swap,"x"]\ar[dashed,ld] \\
       R\ar[u]&
      \end{tikzcd}
    \end{center}
This means that for all $x:R^n$ such that $P(x)=0$ mod $\epsilon$, there exists a unique $y:R^n$ such that:
\begin{itemize} 
\item We have $x=y$ mod $\epsilon$.
\item We have $P(y)=0$.
\end{itemize}

First we prove existence. For any $b:R^n$ we compute:
\[P(x+\epsilon b) = P(x) + \epsilon\ dP_x(b)\]
We have that $P(x)=0$ mod $\epsilon$, say $P(x) = \epsilon a$. Since $\neg\neg(P(x) = 0)$, we have that $dP_x$ is invertible. Then taking $b = -(dP_x)^{-1}(a)$ gives a lift $y=x+\epsilon b$ such that $P(y) = 0$.

Now we check unicity. Assume $y,y'$ two such lifts, then $y=y'$ mod $\epsilon$ and we have:
\[P(y) = P(y') + dP_{y'}(y-y')\]
and $P(y)=0$ and $P(y')=0$ so that:
\[dP_{y'}(y-y') = 0\]
But $dP_{y'}$ is invertible and we can conclude that $y=y'$.
\end{proof}

\begin{lemma}\label{standard-smooth-is-smooth}
Any standard smooth scheme of dimension $k$ is smooth of dimension $k$. 
\end{lemma}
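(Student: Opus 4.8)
The plan is to establish the two requirements of \Cref{definition-smooth-dim-n} separately for $S\colonequiv\Spec(R[X_1,\dots,X_n,Y_1,\dots,Y_k]/(P_1,\dots,P_n))$: that $S$ is formally smooth, and that every tangent space of $S$ is finite free of dimension $k$. Together with $S$ being affine (hence a scheme), these give exactly that $S$ is smooth of dimension $k$.

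For smoothness, the idea is to factor the verification through the projection $\pi:S\to\A^k$ onto the $Y$-coordinates. First I would identify the fiber of $\pi$ over a point $y_0:\A^k$: contracting the equality $y=y_0$, one gets an equivalence with the affine scheme $\Spec(R[X_1,\dots,X_n]/(P_1(X,y_0),\dots,P_n(X,y_0)))$, where $P_i(X,y_0)$ denotes substitution of $Y_l$ by $(y_0)_l$. Evaluation $Y_l\mapsto(y_0)_l$ is an $R$-algebra homomorphism from $R[X_1,\dots,X_n,Y_1,\dots,Y_k]/(P_1,\dots,P_n)$ to this fiber ring, and it carries $\det((\partial P_i/\partial X_j)(X,Y))$ — invertible by hypothesis — to $\det((\partial P_i/\partial X_j)(X,y_0))$, which is the Jacobian minor appearing in the standard étale condition for the fiber; since ring homomorphisms preserve invertibility, this determinant is invertible in the fiber ring. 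Hence every fiber of $\pi$ is a standard étale scheme, so formally étale by \Cref{standard-etale-are-etale}, and in particular formally smooth. Now $\A^k$ is smooth by \Cref{An-is-smooth} and $S\simeq\sum_{y:\A^k}\mathrm{fib}_\pi(y)$, so the second item of \Cref{smooth-sigma-closed} shows $S$ is formally smooth; being affine, $S$ is a smooth scheme.

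For the dimension, fix $p:S$. By \Cref{kernel-is-tangent-of-fibers} together with \Cref{An-dimension-n}, $T_p(S)$ is the kernel of the Jacobian $J_p:R^{n+k}\to R^n$ of $(P_1,\dots,P_n)$ at $p$. Write $J_p=(A\mid B)$, where $A\colonequiv((\partial P_i/\partial X_j)(p))$ is the $n\times n$ block of $X$-derivatives and $B$ the $n\times k$ block of $Y$-derivatives. Then $\det A$ is the image of the invertible element $\det((\partial P_i/\partial X_j))$ under the $R$-algebra map $R[X_1,\dots,X_n,Y_1,\dots,Y_k]/(P_1,\dots,P_n)\to R$ determined by $p$, hence invertible, so $A$ is invertible. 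Therefore $Au+Bv=0$ is equivalent to $u=-A^{-1}Bv$, and the assignment $v\mapsto(-A^{-1}Bv,v)$ is a linear isomorphism from $R^k$ onto $T_p(S)$. So $T_p(S)$ is finite free of dimension $k$, which finishes the proof.

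I expect the main obstacle to be the bookkeeping in the first step: correctly identifying $\mathrm{fib}_\pi(y_0)$ with the stated affine scheme and checking that the standard-étale Jacobian condition genuinely descends to it, i.e.\ that evaluation at $y_0$ is a well-defined $R$-algebra map sending the relevant determinant to the determinant of the fiber's defining system. An alternative that avoids \Cref{smooth-sigma-closed} is to verify the lifting property directly for square-zero $\epsilon$, as in the proof of \Cref{standard-etale-are-etale}: lift the $Y$-coordinates arbitrarily (they are unconstrained), lift the $X$-coordinates approximately, and correct the error using that the relevant $n$-minor of the Jacobian is $\neg\neg$-invertible, hence invertible; this exhibits a lift by hand, after which the tangent space computation proceeds as above.
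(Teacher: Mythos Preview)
Your proof is correct and, for smoothness, follows exactly the paper's route: fibers of the projection $\pi:S\to\A^k$ are standard étale (hence étale by \Cref{standard-etale-are-etale}), $\A^k$ is smooth by \Cref{An-is-smooth}, and \Cref{smooth-sigma-closed} closes the argument.

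For the dimension the paper argues slightly differently. Having established that $\pi$ is étale and $S$ is smooth, it invokes \Cref{etale-schemes-iff-local-iso} to conclude that $d\pi_p:T_p(S)\to T_{\pi(p)}(\A^k)$ is an isomorphism, and then \Cref{An-dimension-n} gives $T_{\pi(p)}(\A^k)=R^k$. Your approach instead computes $T_p(S)$ directly as the kernel of the Jacobian $J_p=(A\mid B)$ via \Cref{kernel-is-tangent-of-fibers} and \Cref{An-dimension-n}, and then uses the invertibility of the $n\times n$ block $A$ to write down an explicit linear isomorphism $R^k\to\ker J_p$. Both are short; your version is more self-contained (it does not need the machinery behind \Cref{etale-schemes-iff-local-iso}, which in turn rests on \Cref{neighborhood-tangent-correspondence-smooth}), while the paper's version reuses a structural result already in hand and avoids any matrix manipulation.
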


\begin{proof}
The fibers of the map:
\[\Spec\big(R[X_1,\cdots,X_n,Y_1,\cdots Y_{k}] / P_1,\cdots,P_n\big) \to \Spec(R[Y_1,\cdots Y_{k}])\]
are standard étale, so the map is étale by \Cref{standard-etale-are-etale}. Since:
\[\Spec(R[Y_1,\cdots Y_{k}]) = \A^k\]
is smooth by \Cref{An-is-smooth}, we can conclude it is smooth using \Cref{smooth-sigma-closed}. 

For the dimension we use \Cref{An-dimension-n} and \Cref{smooth-maps-are-submersions}.
\end{proof}

\subsection{Smooth schemes are locally standard smooth}

\begin{proposition}\label{smooth-are-locally-standard}
A scheme is smooth of dimension $k$ if and only if it has a finite open cover by standard smooth schemes of dimension $k$.
\end{proposition}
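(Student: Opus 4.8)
The plan is to prove both implications using the machinery already developed. The easy direction is that a finite open cover by standard smooth schemes of dimension $k$ makes a scheme smooth of dimension $k$: by \Cref{etale-zariski-local}, smoothness is Zariski-local, and by \Cref{standard-smooth-is-smooth} each member of the cover is smooth of dimension $k$; since the dimension is read off from the (finite free, rank $k$) tangent spaces, which only depend on an open neighborhood of the point, the dimension is constant equal to $k$.

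For the converse, suppose $X$ is smooth of dimension $k$. First I would reduce to the affine case: $X$ has a finite open cover by affine schemes, each of which is again smooth of dimension $k$ by \Cref{etale-zariski-local} and the locality of tangent spaces, so it suffices to cover an affine smooth scheme $X=\Spec\big(R[X_1,\dots,X_n]/(P_1,\dots,P_l)\big)$ by standard smooth schemes of dimension $k$. By \Cref{smooth-have-free-tangent}, for every $x:X$ the tangent space $T_x(X)$ is finite free of rank $k$; by \Cref{kernel-is-tangent-of-fibers} and \Cref{An-dimension-n} it is the kernel of the Jacobian $\mathrm{Jac}(P_1,\dots,P_l)_x : R^n\to R^l$. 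So at each point this $l\times n$ matrix over $R$ has a kernel which is free of rank $k$, equivalently (working over the local ring $R$, using that finitely generated projective/free and rank conditions can be detected by minors) it has rank $n-k$, i.e.\ some $(n-k)\times(n-k)$ minor is invertible while all larger minors vanish — at least after passing to a suitable Zariski neighborhood and reordering the rows/columns. The key point is that "some fixed $(n-k)$-minor is invertible" is an open condition, and as $x$ ranges over $X$ these opens cover $X$; by compactness (finiteness of the cover of a scheme) finitely many suffice.

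On each such open $D(m)$, after reindexing we may assume the minor of $\mathrm{Jac}$ formed by $P_1,\dots,P_{n-k}$ and $X_1,\dots,X_{n-k}$ is invertible. I would then rename $X_{n-k+1},\dots,X_n$ as $Y_1,\dots,Y_k$, so that $D(m)$ is (open in) $\Spec\big(R[X_1,\dots,X_{n-k},Y_1,\dots,Y_k]/(P_1,\dots,P_l)\big)$ with the determinant of $\big(\partial P_i/\partial X_j\big)_{1\le i,j\le n-k}$ invertible on it. The remaining task is to see that the extra equations $P_{n-k+1},\dots,P_l$ are redundant on a possibly smaller open, i.e.\ they lie in the ideal generated by $P_1,\dots,P_{n-k}$ locally — this is the usual "Jacobian criterion implies local complete intersection" step. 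I expect this to be the main obstacle: one wants to argue that since the closed subscheme cut out by $P_1,\dots,P_{n-k}$ is already standard smooth of dimension $k$ (hence smooth of dimension $k$ by \Cref{standard-smooth-is-smooth}) and contains $X$ as a closed subscheme with the same tangent spaces everywhere, the inclusion $X\hookrightarrow \Spec\big(R[\underline X,\underline Y]/(P_1,\dots,P_{n-k})\big)$ is étale by \Cref{etale-schemes-iff-local-iso} (both sides smooth, $df_x$ an isomorphism), and an étale closed immersion is an open immersion onto a clopen, so locally an isomorphism; thus $X$ itself is locally standard smooth of dimension $k$. Assembling the finitely many opens $D(m)$ gives the desired finite cover, completing the proof.
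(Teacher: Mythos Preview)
Your plan matches the paper's through the reduction to affines and the covering by opens where a fixed $(n-k)$-minor of the Jacobian is invertible; the divergence is at the key step of disposing of the extra equations $P_{n-k+1},\dots,P_l$.

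The paper does not go through étaleness of the closed immersion. Instead it proves directly that, for every $(x,y)$ with $P(x,y)=0$, the proposition $Q(x,y)=0$ is decidable: one shows $(Q_i(x,y))^2=0 \Rightarrow (Q_i(x,y))=0$ by using smoothness of $X$ to lift $(x,y)$ to a point $(x',y')\in X$ infinitesimally close to it, and then a first-order Taylor expansion combined with the block identity of \Cref{rank-bloc-matrix} forces $Q(x,y)=0$. Decidability (via \Cref{closed-and-etale-decidable}) then makes $X$ open in $\Spec(R[X,Y]/(P_1,\dots,P_{n-k}))$, and one finishes by rewriting each basic open $D(G)$ as a standard smooth presentation via adjoining $GX_{n+1}-1$.

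Your alternative---apply \Cref{etale-schemes-iff-local-iso} to see the closed immersion is étale, then invoke \Cref{closed-and-etale-decidable} on its (closed-proposition) fibers to conclude it is a clopen embedding---is correct and amounts to a repackaging of the same ingredients. One caution: ``both tangent spaces are free of rank $k$'' does not by itself make $di_x$ an isomorphism (an $R$-linear injection $R^k\hookrightarrow R^k$ need not be onto here); the equality $T_x(X)=T_x(U)$ really uses that \emph{all} $(n-k+1)$-minors vanish, which via \Cref{rank-bloc-matrix} shows the kernel of the full Jacobian equals the kernel of its first $n-k$ rows. You do record the rank condition, so this step goes through, but it deserves to be made explicit. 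Your route buys conceptual clarity by hiding the smoothness-lifting computation inside \Cref{smooth-schemes-iff-submersion}; the paper's hands-on computation avoids that dependency at the cost of some Jacobian bookkeeping.
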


\begin{proof}
By \Cref{etale-zariski-local} and \Cref{standard-smooth-is-smooth}, we get the converse. 

For the direct implication, by \Cref{etale-zariski-local} it is enough to consider an affine scheme: 
\[
X = \Spec(R[X_1,\cdots,X_m]/P_1,\cdots,P_l)
\]

From \Cref{smooth-have-free-tangent}we get that for any $x:X$ we have that $dP_x$ has finite free kernel of rank $k$. Then by \Cref{rank-equivalent-definitions} we know that $dP_x$ has rank $n=m-k$ for every $x$.

We cover $X$ by principal opens according to which $n$-minor is invertible, so that up to a rearranging of variables and polynomials we can assume a polynomial $G$ such that:
\[X = \Spec(R[X_1,\cdots,X_{n},Z,Y_1,\cdots,Y_k]/P_1,\cdots,P_{n},1-ZG,Q_1,\cdots, Q_p)\]
which can be rewritten as:
\[X = \Spec(R[X_1,\cdots,X_{q},Y_1,\cdots,Y_k]/P_1,\cdots,P_{q},Q_1,\cdots, Q_p)\]
where $q=n+1$ and $\left(\frac{\partial P_i}{\partial X_j}\right)_{i,j}$ is invertible modulo $P_1,\cdots,P_{q},Q_1,\cdots, Q_p$.

 Then we have:
\[d(P,Q)_{x,y} = \begin{pmatrix}
\left(\frac{\partial P}{\partial X}\right)_{x,y} & \left(\frac{\partial P}{\partial Y}\right)_{x,y} \\
\left(\frac{\partial Q}{\partial X}\right)_{x,y} & \left(\frac{\partial Q}{\partial Y}\right)_{x,y} \\
\end{pmatrix}\]
where we used the notation:
\[\left(\frac{\partial P}{\partial X}\right)_{x,y} = \left(\frac{\partial P_i}{\partial X_j}(x,y)\right)_{i,j}\]
so that $\left(\frac{\partial P}{\partial X}\right)_{x,y}$ is invertible of size $q$. Moreover by \Cref{rank-bloc-matrix} we get:
\[\left(\frac{\partial Q}{\partial Y}\right)_{x,y} = \left(\frac{\partial Q}{\partial X}\right)_{x,y}\left(\frac{\partial P}{\partial X}\right)_{x,y}^{-1} \left(\frac{\partial P}{\partial Y}\right)_{x,y} \]
which will be useful later.

Now we prove that for any $(x,y):R^{q+k}$ such that $P(x,y)=0$, the proposition $Q(x,y)=0$ is decidable.

Using \Cref{square-zero-implies-zero-decidable} this follow from:
\[(Q_1(x,y),\cdots,Q_p(x,y))^2=0 \to (Q_1(x,y),\cdots,Q_p(x,y))=0\]
Assuming $(Q_1(x,y),\cdots,Q_p(x,y))^2=0$, by smoothness there is a dotted lift in:
\begin{center}
      \begin{tikzcd}
        R/(Q_1(x,y),\cdots,Q_l(x,y)) & \Spec(R[X_1,\cdots,X_q,Y_1,\cdots,Y_k]/P_1,\cdots,P_q,Q_1,\cdots, Q_l)\ar[l,swap,"(x{,}y)"] \ar[dashed,ld,"(x{'}{,}y{'})"]\\
       R\ar[u] & \\
      \end{tikzcd}
\end{center}
Let us prove that $Q(x,y) = 0$. Indeed we have $(x,y) \sim_1 (x',y')$ so that we have:
\[P(x,y) = P(x',y')+ \left(\frac{\partial P}{\partial X}\right)_{x',y'}(x-x') + \left(\frac{\partial P}{\partial Y}\right)_{x',y'}(y-y') \]
\[Q(x,y) = Q(x',y')+ \left(\frac{\partial Q}{\partial X}\right)_{x',y'}(x-x') + \left(\frac{\partial Q}{\partial Y}\right)_{x',y'}(y-y') \]
Then we have $P(x,y) = 0$, $P(x',y')=0$ and $Q(x',y') = 0$. From the first equality we get:
\[x-x' =  -\left(\frac{\partial P}{\partial X}\right)_{x',y'}^{-1}\left(\frac{\partial P}{\partial Y}\right)_{x',y'}(y-y')\]
so that from the second we get:
\[Q(x,y) = -\left(\frac{\partial Q}{\partial X}\right)_{x',y'}\left(\frac{\partial P}{\partial X}\right)_{x',y'}^{-1}\left(\frac{\partial P}{\partial Y}\right)_{x',y'}(y-y') + \left(\frac{\partial Q}{\partial Y}\right)_{x',y'}(y-y')\]
so that $Q(x,y)=0$ as we have seen previously that:
\[\left(\frac{\partial Q}{\partial Y}\right)_{x',y'} = \left(\frac{\partial Q}{\partial X}\right)_{x',y'}\left(\frac{\partial P}{\partial X}\right)_{x',y'}^{-1} \left(\frac{\partial P}{\partial Y}\right)_{x',y'} \]
From the decidability of $Q(x,y)=0$ we get that $X$ is an open in $\Spec(R[X_1,\cdots,X_q,Y_1,\cdots,Y_k]/P_1,\cdots,P_q)$
so it is of the form $D(G_1,\cdots,G_r)$, and we have an open cover of our scheme by pieces of the form:
\[\Spec(R[X_1,\cdots,X_q,Z,Y_1,\cdots,Y_k]/P_1,\cdots,P_q,1-ZG_i)\]
which is standard smooth of dimension $k$.
\end{proof}

\begin{corollary}
A scheme is étale if and only if it has a cover by standard étale schemes.
\end{corollary}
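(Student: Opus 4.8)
The plan is to identify formally étale schemes with smooth schemes of dimension $0$ in the sense of \Cref{definition-smooth-dim-n}, and then to read off the result from \Cref{smooth-are-locally-standard} applied with $k = 0$, using that a standard smooth scheme of dimension $0$ is by definition a standard étale scheme.

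First I would prove that a scheme $X$ is formally étale if and only if it is smooth of dimension $0$. For the forward direction, a formally étale scheme is in particular formally smooth, hence a smooth scheme, so by \Cref{smooth-have-free-tangent} every tangent space $T_x(X)$ is finite free; but a formally étale scheme is also formally unramified, so by \Cref{unramified-scheme-characterisation} we have $T_x(X) = 0$ for all $x : X$, i.e.\ the tangent spaces are free of dimension $0$. Conversely, if $X$ is smooth with all tangent spaces free of dimension $0$, then $T_x(X) = 0$ for all $x$, so $X$ is formally unramified by \Cref{unramified-scheme-characterisation}, and together with formal smoothness this gives formal étaleness.

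The equivalence then follows by applying \Cref{smooth-are-locally-standard} with $k = 0$: a scheme is smooth of dimension $0$ if and only if it admits a finite open cover by standard smooth schemes of dimension $0$, which are exactly the standard étale schemes. For the ``if'' direction one can alternatively argue directly: standard étale schemes are étale by \Cref{standard-etale-are-etale}, hence formally étale, and being formally étale is Zariski local by \Cref{etale-zariski-local}, so a scheme with such a cover is formally étale.

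I do not expect a genuine obstacle here; the only thing requiring care is the bookkeeping in the first step, where one combines the decomposition of formal étaleness into formal smoothness plus formal unramifiedness, the characterization of formally unramified schemes by vanishing tangent spaces, and the freeness of tangent spaces of smooth schemes, to match the dimension-$0$ condition of \Cref{definition-smooth-dim-n}.
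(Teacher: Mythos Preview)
Your proof is correct and follows essentially the same route as the paper's: identify formally \'etale schemes with smooth schemes of dimension $0$ via \Cref{unramified-scheme-characterisation}, then apply \Cref{smooth-are-locally-standard} with $k=0$. The only redundancy is your appeal to \Cref{smooth-have-free-tangent} in the forward direction: once \Cref{unramified-scheme-characterisation} gives $T_x(X)=0$, the tangent spaces are trivially free of rank $0$, so no separate freeness argument is needed.
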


\begin{proof}
By \Cref{unramified-scheme-characterisation} we know that a scheme is formally étale if and only if it is smooth of dimension $0$. Then we just apply \Cref{smooth-are-locally-standard}.
\end{proof}


\pagebreak
\appendix
\section{Appendix}
\subsection{Rank of matrices}

\begin{definition}
A matrix is said to have rank $\leq n$ if all its $n+1$-minors are zero. It is said to have rank $n$ if it has rank $\leq n$ and does not have rank $\leq n-1$.
\end{definition}

Having a rank is a property of matrices, as a rank function defined on all matrices would allow to e.g. decide if an $r:R$ is invertible.

\begin{lemma}\label{rank-bloc-matrix}
Assume given a matrix $M$ of rank $n$ decomposed into blocks:
\[M = \begin{pmatrix}
P & Q  \\
R & S \\
\end{pmatrix}\]
Such that $P$ is square of size $n$ and invertible. Then we have:
\[S = RP^{-1}Q\]
\end{lemma}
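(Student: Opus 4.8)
The plan is to reduce the matrix identity to a statement about individual $(n+1)$-minors of $M$, one entry of $S$ at a time. Write $M$ as an $(n+p)\times(n+q)$ matrix, so that $Q$ has $q$ columns, $R$ has $p$ rows, and $S$ is $p\times q$; the claimed identity then makes sense since $RP^{-1}Q$ is $p\times q$ as well. It suffices to prove $S_{ij}=(RP^{-1}Q)_{ij}$ for each $1\le i\le p$ and $1\le j\le q$.

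Fix such $i$ and $j$, and let $N$ be the $(n+1)\times(n+1)$ submatrix of $M$ obtained by keeping the first $n$ rows together with row $n+i$, and the first $n$ columns together with column $n+j$. In block form $N=\begin{pmatrix}P & q\\ r & s\end{pmatrix}$, where $q$ is the $j$-th column of $Q$, $r$ is the $i$-th row of $R$, and $s=S_{ij}$. Since $M$ has rank $n$, it has rank $\le n$, so by definition every $(n+1)$-minor of $M$ vanishes; in particular $\det N=0$.

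Next I would compute $\det N$ by a Schur complement argument. Since $P$ is invertible, $\det P$ is a unit in $R$ (because $\det P\cdot\det P^{-1}=1$), and we have the purely algebraic factorisation
\[
N=\begin{pmatrix}P & q\\ r & s\end{pmatrix}=\begin{pmatrix}P & 0\\ r & 1\end{pmatrix}\begin{pmatrix}I_n & P^{-1}q\\ 0 & s-rP^{-1}q\end{pmatrix},
\]
valid over any commutative ring, whence $\det N=\det P\cdot(s-rP^{-1}q)$ by block-triangularity and multiplicativity of the determinant. Combining $\det N=0$ with the invertibility of $\det P$ gives $s=rP^{-1}q$, i.e.\ $S_{ij}=(RP^{-1}Q)_{ij}$. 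As $i,j$ were arbitrary, $S=RP^{-1}Q$.

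There is essentially no serious obstacle: the argument is constructive throughout, and the only points needing a word of justification are that $P$ invertible implies $\det P$ invertible, that the rank hypothesis is exactly the vanishing of all $(n+1)$-minors, and the block factorisation displayed above (a routine verification). One could alternatively prove that rank is invariant under left-multiplication by an invertible matrix and apply this to $\begin{pmatrix}I & 0\\ -RP^{-1} & I\end{pmatrix}$ to clear the lower-left block in one step, but that would require establishing the invariance lemma, which the entry-by-entry approach sidesteps.
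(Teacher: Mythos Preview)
Your proof is correct. The Schur complement factorisation is valid over any commutative ring, and the identification of the relevant $(n+1)$-minor as $\det P\cdot(s-rP^{-1}q)$ is exactly what is needed.

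The paper takes a different route, essentially the one you sketch in your final paragraph: it performs a single block elimination on $M$ (multiplying on the left by the elementary block matrix with $-RP^{-1}$ in the lower-left corner) to obtain
\[
\begin{pmatrix} P & Q \\ 0 & S-RP^{-1}Q \end{pmatrix},
\]
and then invokes the fact that equivalent matrices have the same rank to conclude that the lower-right block must vanish. So the paper treats the whole of $S$ at once, at the cost of appealing to rank invariance under multiplication by invertible matrices, which it asserts without proof. Your entry-by-entry argument trades that global invariance lemma for a direct computation of one specific $(n+1)$-minor per entry, which makes the argument more self-contained and keeps the determinant algebra localised to $(n+1)\times(n+1)$ matrices. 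Both approaches are short; yours is arguably cleaner in a setting where the rank-invariance lemma has not already been established.
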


\begin{proof}
By columns manipulation the matrix is equivalent to:
\[M = \begin{pmatrix}
P & Q  \\
0 & S - RP^{-1}Q \\
\end{pmatrix}\]
but equivalent matrices have the same rank so $S=RP^{-1}Q$.
\end{proof}

\begin{lemma}\label{rank-equivalent-definitions}
If a linear map $R^m \to R^n$ given by multiplication with $M$
has finite free kernel of rank $k$, then $M$ has rank $m-k$.
\end{lemma}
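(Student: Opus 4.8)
The plan is to prove this by induction on $m$, in the same style as the proof of \Cref{smooth-corpresented-implies-free}. The first observation is that the hypothesis already makes the proposition $M = 0$ decidable: the kernel is isomorphic to $R^k$, and $R^k \simeq \A^k$ is formally smooth because $\A^k$ is smooth (\Cref{An-is-smooth}), so \Cref{smooth-kernel-decidable} applies and yields $M = 0$ or $M \neq 0$.

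In the case $M = 0$ the kernel is all of $R^m$, so $R^m \cong R^k$, hence $m = k$, and the zero matrix has rank $0 = m - k$; this also settles the base case $m = 0$. In the case $M \neq 0$ some entry of $M$ is invertible, since a nonzero element of $R$ is invertible and, exactly as in \Cref{smooth-corpresented-implies-free}, a nonzero matrix over $R$ merely has a nonzero entry — and since the goal is a proposition this mere existence suffices to perform a base change. Permuting rows and columns to bring the pivot to the top left, rescaling so the pivot becomes $1$, and then clearing the rest of the first row and column by elementary operations, $M$ becomes equivalent to a block matrix $\begin{pmatrix} 1 & 0 \\ 0 & \widetilde M \end{pmatrix}$ with $\widetilde M : R^{m-1} \to R^{n-1}$. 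These operations are multiplications by invertible matrices, so they preserve the rank and carry the kernel along an isomorphism; hence $\ker(\widetilde M) \cong \ker(M) \cong R^k$, and the induction hypothesis gives that $\widetilde M$ has rank $(m-1) - k$. Combining this with the facts that $\begin{pmatrix} 1 & 0 \\ 0 & \widetilde M \end{pmatrix}$ has rank one more than $\widetilde M$, and that equivalent matrices have the same rank (already used in \Cref{rank-bloc-matrix}), we conclude that $M$ has rank $m - k$.

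The two auxiliary facts are routine: the block-rank identity follows by sorting the square submatrices of $\begin{pmatrix} 1 & 0 \\ 0 & \widetilde M \end{pmatrix}$ into those that use both the first row and the first column — which, by expansion along the corner entry $1$, are exactly the square submatrices of $\widetilde M$ of size one less — those that use exactly one of the first row and the first column, which contain a zero line and hence have vanishing determinant, and those that use neither, which are square submatrices of $\widetilde M$. I expect the only genuine subtlety — and the reason one cannot just compute with minors directly — is the interplay between decidability of $M = 0$ and the constructive extraction of an invertible entry: without first noting that the kernel, being finite free, is formally smooth (so that \Cref{smooth-kernel-decidable} applies), there is no way to split into the two cases and the induction never gets off the ground.
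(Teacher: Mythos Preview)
Your proof is correct and takes a genuinely different route from the paper's. The paper argues directly: take a basis $a_1,\dots,a_k$ of the kernel, complete it to a basis of $R^m$ via $b_{k+1},\dots,b_m$, then complete $Mb_{k+1},\dots,Mb_m$ to a basis of $R^n$; in these bases $M$ becomes $\begin{pmatrix} I_{m-k} & 0 \\ 0 & 0 \end{pmatrix}$ and the rank is read off. You instead induct on $m$ in the style of \Cref{smooth-corpresented-implies-free}, invoking \Cref{smooth-kernel-decidable} (applicable because a free kernel is formally smooth) to split into $M=0$ versus $M\neq 0$, and pivoting on an invertible entry in the latter case. Your approach makes the constructive content explicit and recycles machinery already developed in the body of the paper; the paper's version is shorter and non-inductive but silently relies on basis-completion steps---extending a basis of the free kernel to one of $R^m$, and likewise extending the images $Mb_i$ to a basis of $R^n$---whose constructive justification is not spelled out there. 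Your closing remark about where the genuine subtlety lies is apt: in your argument it is the decidability of $M=0$ that drives the induction; in the paper's argument the corresponding work is hidden in those completion steps.
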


\begin{proof}
  Let $a_1,\dots,a_{k}$ be a basis for the kernel of $M$ in $R^m$, which we complete into a basis of $R^m$ via $b_{k+1},\dots,b_m$.
  By completing $Mb_{k+1},\dots, Mb_m$ to a basis of $R^n$, we get a basis where $M$ is written as:
\[\begin{pmatrix}
I_{m-k} & 0  \\
0 & 0 \\
\end{pmatrix}\]
so that $M$ has rank $m-k$.
\end{proof}





\printbibliography

\end{document}